\newcommand\cc{\mathfrak c}
\renewcommand\gg{\mathfrak g}
\newcommand\hh{\mathfrak h}
\newcommand\kk{\mathfrak k}
\newcommand\frakl{\mathfrak l}
\newcommand\mm{\mathfrak m}
\newcommand\pp{\mathfrak p}
\newcommand\mb{\mathbb}
\newcommand{\inprod}[1]{\langle #1\rangle}
\newcommand\inverse{{^{-1}}}
\newcommand{\ZZ}{\mathbb{Z}}
\newcommand{\NN}{\mathbb{N}}
\newcommand\gl{\mathfrak{gl}}
\newcommand\ra{\rightarrow}
\DeclareMathOperator{\Ad}{Ad}
\DeclareMathOperator{\Char}{char}
\DeclareMathOperator{\diag}{diag}
\DeclareMathOperator{\GL}{GL}
\DeclareMathOperator{\SL}{SL}
\DeclareMathOperator{\Lie}{Lie}
\DeclareMathOperator{\Mat}{Mat}
\DeclareMathOperator{\End}{End}
\newcommand{\tuple}[1]{{\mathbf {#1}}}
\numberwithin{equation}{section}
\newtheorem{thm}[equation]{Theorem}
\newtheorem{lem}[equation]{Lemma}
\newtheorem{cor}[equation]{Corollary}
\newtheorem{prop}[equation]{Proposition}
\theoremstyle{definition}
\newtheorem{defn}[equation]{Definition}
\newtheorem{exmp}[equation]{Example}
\theoremstyle{remark}
\newtheorem{rem}[equation]{Remark}
\theoremstyle{remark}
\newtheorem{rems}[equation]{Remarks}
\newcommand{\ovl}[1]{{\overline{#1}}}
\subjclass[2010]{20G15, 14L24, 20E42.}
\keywords{Conjugacy classes of $n$-tuples, $G$-complete reducibility}
\title[Complete Reducibility and Conjugacy classes of tuples]
{Complete Reducibility and Conjugacy classes of tuples
in Algebraic Groups and Lie algebras}
\author[M.\  Bate]{Michael Bate}
\address
{Department of Mathematics,
University of York,
York YO10 5DD,
United Kingdom}
\email{meb505@york.ac.uk}
\author[B.\ Martin]{Benjamin Martin}
\address
{Mathematics and Statistics Department,
University of Canterbury,
Private Bag 4800,
Christchurch 1,
New Zealand}
\email{B.Martin@math.canterbury.ac.nz}
\author[G. R\"ohrle]{Gerhard R\"ohrle}
\address
{Fakult\"at f\"ur Mathematik,
Ruhr-Universit\"at Bochum,
D-44780 Bochum, Germany}
\email{gerhard.roehrle@rub.de}
\author[R.\ Tange]{Rudolf Tange}
\address
{Department of Mathematics,
University of York,
York YO10 5DD,
United Kingdom}
\email{rht502@york.ac.uk}
\begin{document}

\begin{abstract}
Let $H$ be a reductive subgroup of a reductive group $G$ over an algebraically
closed field $k$. We consider the action of $H$ on $G^n$,
the $n$-fold Cartesian product of $G$
with itself, by simultaneous conjugation.
We give a purely algebraic characterization of the closed
$H$-orbits in $G^n$, 
generalizing work of Richardson
which treats the case $H = G$.

This characterization turns out to be a natural generalization of Serre's
notion of $G$-complete reducibility.
This concept appears to be new, even in characteristic zero.
We discuss how to extend some key results on $G$-complete
reducibility in this framework.  We also consider some rationality questions.
\end{abstract}

\maketitle

\setcounter{tocdepth}{1}
\tableofcontents

\section{Introduction}
\label{sec:intro}

Let $G$ be a reductive linear algebraic group over an algebraically
closed field, and suppose $G$ acts on an affine variety $V$.
A fundamental problem in geometric invariant theory is to
determine the closed orbits of $G$ in $V$.  These orbits
correspond to the points in the quotient variety $V/\!\!/G$,
so this is the first step towards understanding the geometry
of the quotient.  Often it is of particular interest to find
the open subset of stable orbits, which consists of points
on which the quotient map $\pi\colon V\ra V/\!\!/G$ is
especially well-behaved.  Moreover, once the closed
orbits are known, one can study degeneration phenomena:
the way in which a point in a non-closed orbit can be brought
inside a closed orbit by taking a limit along a cocharacter.

An important family of examples arises as follows.
Take $G$ to be a subgroup of ${\rm GL}(W)$ for some
finite-dimensional vector space $W$, choose a
subvariety $C$ of ${\rm End}(W)$ that is stable
under conjugation by $G$, and take $V$ to be
$C^n$ for some $n\in \NN$, where $G$ acts on
$C^n$ by simultaneous conjugation.  Typically
$C$ carries some algebraic structure:
it might be a subgroup of ${\rm GL}(W)$, or a Lie subalgebra
or associative subalgebra of ${\rm End}(W)$.
For instance, if $G= {\rm GL}(W)$ and $C= {\rm End}(W)$,
then, by work of H.\ Kraft
(\cite[Prop.\ 4.4]{kraft1} or \cite[II.2.7 Satz 2]{kraft}),
for any $v= (c_1,\ldots, c_n)\in C^n = V$,
the orbit $G\cdot v$ is closed if and only if $W$ is semisimple
as an $A$-module, where $A$ is the associative
subalgebra of ${\rm End}(W)$ generated by $c_1,\ldots, c_n$;
moreover, if $G\cdot v$ is not closed,
then the degeneration process referred to
above is the ``semisimplification'',
in which one replaces the $A$-module
$W$ with the direct sum of its composition factors
(\cite[Prop.\ 4.5]{kraft1} or \cite[II.2.7 Satz 3]{kraft}).

Now consider the case when $C= G$.
In his seminal work \cite[Thm.\ 16.4]{rich},
Richardson gave an algebraic characterization
of the closed $G$-orbits in $G^n$.  In \cite[Thm.\ 3.1]{BMR}
it was shown that his criterion for an orbit to be
closed can be formulated using the representation-theoretic
notion of $G$-complete reducibility due to Serre, \cite{serre1.5}.
This concept has been much studied
and it has proved a useful tool for exploring
the subgroup structure of simple algebraic groups \cite{liebeckseitz}, \cite{liebeckseitz2}, \cite{liebecktesterman}, \cite{seitz1}, \cite{serre0}, \cite{serre1}, \cite{serre2}.
The approach to $G$-complete reducibility via geometric
invariant theory has proved very fruitful, cf.\
\cite{BMR}, \cite{BMR2}, \cite{BMRT}, and \cite{GIT}.

It is natural to extend Richardson's study and determine the closed $H$-orbits
in $G^n$ for an arbitrary reductive subgroup $H$ of $G$.  In this paper
we show that there is also an algebraic interpretation of the closed orbit
condition in this case.  We introduce the notion of a
\emph{relatively $G$-completely reducible subgroup of $G$ with respect to $H$}
(Section \ref{subsec:relative}) and prove the following result (see Theorem \ref{thm:orbitcrit}).

\begin{thm}
\label{thm:mainthm}
Let $H$ be a reductive subgroup of $G$.
Let $K$ be the algebraic subgroup of $G$ generated by elements
$x_1,\ldots,x_n \in G$.
Then
$H\cdot(x_1, \ldots,x_n)$ is closed in $G^n$
if and only if
$K$ is relatively $G$-completely reducible with respect to $H$.
\end{thm}

This generalizes Richardson's result \cite[Thm.\ 16.4]{rich}
which is the special case of Theorem~\ref{thm:mainthm} when $H = G$.

Note that we can embed $G$ in some ${\rm GL}(W)$, so this fits into the
general setting discussed above (take $C=G$ and replace $G$ with $H$).
In fact, we can take $G$ to be equal to ${\rm GL}(W)$ if we wish
(cf.~Corollary \ref{cor:GvsM}).

The definition of $G$-complete reducibility involves cocharacters of $G$.
Theorems about $G$-complete reducibility often involve taking not arbitrary
cocharacters of $G$ but cocharacters of a proper reductive subgroup
$H$ of $G$ (see \cite[Prop.\ 5.7]{BMRT}, for example).
The notion of relative complete reducibility
gives a systematic way to formalise such arguments: hence our results are
of interest even if one is concerned mainly with $G$-complete reducibility.

Armed with Theorem \ref{thm:mainthm},
we explore some basic properties
of relative $G$-complete reducibility in Section \ref{sec:relative}.
We study the extent to which results about $G$-complete reducibility
extend to relative $G$-complete reducibility, concentrating on what happens
when one varies $H$ or other parameters in the definition.
Returning briefly to the more general setting described above,
we extend the notion of relative $G$-complete reducibility to
certain Lie algebras and associative algebras associated to $G$:
for instance, we consider the case when $C$ is the associative
subalgebra of ${\rm End}(W)$ spanned by $G$.
We also characterize the $H$-stable orbits
in $G^n$ in terms of this notion of relative
$G$-complete reducibility (Proposition \ref{prop:rich-stable}); this
generalizes Richardson's result for the special case $H=G$,
\cite[Prop.\ 16.7]{rich}. From this
we deduce Theorem~\ref{thm:uniquegcr}, a group-theoretic
analogue of the fact that the closure of an $H$-orbit in $G^n$
contains a unique closed $H$-orbit. The other main results in
Section~\ref{sec:relative} are
Theorems~\ref{thm:optpar} and \ref{thm:relativenormal} and
Proposition~\ref{prop:SnormalizesK}.

In Section \ref{sec:rationality},
we define relative $G$-complete reducibility
over an arbitrary field.
We answer a generalization of
a question due to Serre about the behaviour of
$G$-complete reducibility under
separable field extensions (Theorem \ref{thm:serresquestion}).  We discuss the formalism of optimal destabilizing R-parabolic subgroups and give an application (Theorem \ref{thm:gcroptoverk}).
We finish the paper with a section containing a collection of examples.
In particular, we study the case of relative
$\GL(W)$-complete reducibility and give some characterizations in terms of the natural module $W$.

\section{Notation and preliminaries}
\label{sec:prelims}

\subsection{Basic notation}

Let $k$ be an algebraically closed field, and
let $H$ be a linear algebraic group defined over $k$.  All varieties are affine varieties over $k$ unless otherwise noted.
We let $Z(H)$ denote the centre of $H$ and $H^0$ the connected component of
$H$ that contains $1$.  By a subgroup of $H$ we mean a closed subgroup.
If $K$ is a subgroup of $H$, then $C_H(K)$ is the centralizer of $K$ in $H$
and $N_H(K)$ is the normalizer of $K$ in $H$.  We say that $H$ is
\emph{linearly reductive} if every rational representation
of $H$ is semisimple.

For the set of cocharacters (one-parameter subgroups) of $H$ we write $Y(H)$;
the elements of $Y(H)$ are the homomorphisms from
$k^*$ to $H$. There is an action of $H$ on $Y(H)$ given by
$(h\cdot \lambda)(a) = h\lambda(a)h^{-1}$ for
$\lambda\in Y(H)$, $h\in H$ and $a \in k^*$.

The \emph{unipotent radical} of $H$ is denoted $R_u(H)$; it is the maximal
connected normal unipotent subgroup of $H$.
The algebraic group $H$ is called \emph{reductive} if $R_u(H) = \{1\}$;
note that $H$ is reductive if and only if $H^0$ is reductive.

Throughout the paper, $G$ denotes a reductive algebraic group, possibly
non-connected.
We denote the Lie algebra $\Lie G$ by $\gg$, and likewise for subgroups of $G$.  We define $\Mat_m$ to be the associative algebra of $m\times m$ matrices over $k$.

Frequently, we consider the diagonal action of $G$
on $G^n$, the $n$-fold cartesian product of $G$ with itself,
by simultaneous conjugation:
$$
g\cdot(x_1,\ldots,x_n) := (gx_1g\inverse,\ldots,gx_ng\inverse),
$$
for all $g \in G$ and $(x_1,\ldots,x_n) \in G^n$.
Note that any subgroup $H$ of $G$ acts on $G^n$ in the same way.
We also consider the action of $G$ on $\gg^n$ by diagonal simultaneous
adjoint action.

Let $A$ be an algebraic group, a Lie algebra or an associative algebra.  If $n\in {\mathbb N}$ and $\tuple{x}= (x_1,\ldots, x_n)\in A^n$, then we say that $\tuple{x}$ generates $A$ if the $x_i$ generate $A$ as an algebraic group (resp.\ Lie algebra, resp.\ associative algebra).  By this we mean in the algebraic group case that the algebraic subgroup of $A$ generated by the $x_i$ is the whole of $A$, and we say that the algebraic group $A$ is topologically finitely generated.


\subsection{Non-connected reductive groups}
\label{subsec:noncon}
Since we want to work with reductive groups which are not necessarily connected,
we need to extend several familiar ideas from connected reductive groups.
The crucial ingredient of this extension is the introduction of so-called
\emph{Richardson parabolic subgroups} (\emph{R-parabolic subgroups}) of the
reductive group $G$.
We briefly recall the main definitions and results;
for more details and further results,
the reader is referred to \cite[Sec.\ 6]{BMR}.

\begin{defn}
\label{defn:rpars}
For each cocharacter $\lambda \in Y(G)$, let
$P_\lambda = \{ g\in G \mid \underset{a \to 0}{\lim}\,
\lambda(a) g \lambda(a)\inverse \textrm{ exists} \}$
(for the formal definition of such limits, see Definition \ref{defn:limit}).
Recall that a subgroup $P$ of $G$ is \emph{parabolic}
if $G/P$ is a complete variety.
The subgroup $P_\lambda$ is parabolic in this sense, but the converse
is not true: e.g.,\ if $G$ is finite,
then every subgroup is parabolic, but the only
subgroup of $G$ of the form $P_\lambda$ is $G$ itself.
We define
$L_\lambda = \{g \in G \mid \underset{a \to 0}{\lim}\,
\lambda(a) g \lambda(a)\inverse = g\}$.  Then $L_\lambda$ is a reductive subgroup of $G$ and we have $L_\lambda= C_G(\lambda(k^*))$ and $P_\lambda = L_\lambda \ltimes R_u(P_\lambda)$.
We also have
$R_u(P_\lambda) = \{g \in G \mid \underset{a \to 0}{\lim}\,
\lambda(a) g \lambda(a)\inverse = 1\}$.
The map $c_\lambda \colon P_\lambda \to L_\lambda$ given by
\[ c_\lambda(g) = \underset{a\to 0}{\lim}\, \lambda(a)g\lambda(a)\inverse
\]
is a surjective homomorphism of algebraic groups with kernel $R_u(P_\lambda)$;
it coincides with the usual projection $P_\lambda\ra L_\lambda$.  We abuse notation and denote the corresponding map from $P_\lambda^n$ to $L_\lambda^n$ by $c_\lambda$ as well, for any $n\in {\mathbb N}$.
The subgroups $P_\lambda$ for $\lambda \in Y(G)$
are called the \emph{R-parabolic subgroups}
of $G$.
Given an R-parabolic subgroup $P$,
an \emph{R-Levi subgroup} of $P$ is any subgroup $L_\lambda$
such that $\lambda \in Y(G)$ and $P=P_\lambda$.  Note that if $P,Q$ are R-parabolic subgroups of $G$ with $P^0= Q^0$, then $R_u(P)= R_u(Q)$.
If $G$ is connected, then the R-parabolic subgroups
(resp.\ R-Levi subgroups of R-parabolic subgroups)
of $G$ are exactly the parabolic subgroups
(resp.\ Levi subgroups of parabolic subgroups) of $G$;
indeed, most of the theory of parabolic subgroups and
Levi subgroups of connected reductive groups
carries over to R-parabolic and R-Levi subgroups of
arbitrary reductive groups.
In particular, $R_u(P)$ acts simply transitively on the set of
all R-Levi subgroups of an R-parabolic
subgroup $P$.
Also note that $P_\lambda = G$ if and only if $\lambda$ is central in $G$
\cite[Lem.\ 2.4]{BMR}. When it does not cause any confusion, we speak of
``R-Levi subgroups of $G$'' when we mean
``R-Levi subgroups of R-parabolic subgroups of $G$''.
\end{defn}

In this paper, we are interested in reductive subgroups of reductive groups.
If $H$ is a subgroup of $G$, then there is an obvious
inclusion $Y(H) \subseteq Y(G)$ of the sets of cocharacters.
When $H$ is reductive and $\lambda \in Y(H)$, there is then an R-parabolic
subgroup of $H$ associated to $\lambda$, as well as an R-parabolic
subgroup of $G$.
In order to distinguish between R-parabolic subgroups associated to
different subgroups of $G$,
we use the notation
$P_\lambda(H)$, $L_\lambda(H)$, etc., where necessary, but we write
$P_\lambda$ for $P_\lambda(G)$ and $L_\lambda$ for  $L_\lambda(G)$.
Note that $P_\lambda(H) = P_\lambda \cap H$,
$L_\lambda(H) = L_\lambda \cap H$
and $R_u(P_\lambda(H)) = R_u(P_\lambda) \cap H$.

\subsection{Groups acting on varieties}
\label{sub:git}

We recall some general results from geometric invariant theory
required in the sequel, see
\cite{GIT}, \cite[\S 2]{BaRi}, \cite{mumford}, and \cite[Ch.\ 3]{newstead}.

\begin{defn}
\label{defn:limit}
Let $\phi : k^* \to V$ be a morphism of algebraic varieties.
We say that  $\underset{a\to 0}{\lim}\, \phi(a)$ exists
if there exists a morphism $\widehat\phi :k \to V$
(necessarily unique) whose restriction to $k^*$
is $\phi$; if this limit exists, then we set
$\underset{a\to 0}{\lim}\, \phi(a) = \widehat\phi(0)$.
\end{defn}

Now suppose the reductive group $G$ acts on a variety $V$.
For $v \in V$ let $G \cdot v$ denote the $G$-orbit of $v$ in $V$ and
$C_G(v)$ the stabilizer of $v$ in $G$.
It follows easily from Definition \ref{defn:limit}
that if $\underset{a\to 0}{\lim}\,\lambda(a)\cdot v$
exists for a cocharacter $\lambda \in Y(G)$,
then this limit belongs to the closure $\overline{G\cdot v}$
of $G\cdot v$ in $V$.
The well-known Hilbert-Mumford Theorem \cite[Thm.\ 1.4]{kempf}, gives a converse to this:
if $v \in V$ is such that $G\cdot v$ is not closed in $V$,
then there exists a cocharacter $\lambda \in Y(G)$
such that
$\underset{a\to 0}{\lim}\,\lambda(a) \cdot v$ exists and
lies in
the unique closed $G$-orbit contained in $\ovl{G\cdot v}$.

We often use the following simple lemma in the sequel, \cite[Lem.\ 2.12]{GIT}.

\begin{lem}
\label{lem:Ruconj}
Suppose $G$ acts on a variety $V$.
Let $v\in V$, let $\lambda\in Y(G)$ and let
$u\in R_u(P_\lambda)$.
Then $\underset{a \ra 0}{\lim}\lambda(a)\cdot v$
exists and equals $u\cdot v$
if and only if $u^{-1}\cdot\lambda$ centralizes $v$.
\end{lem}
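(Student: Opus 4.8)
The plan is to translate the hypothesis into a single fixed-point statement and then prove the two implications symmetrically, using only the extension-of-morphisms definition of limit (Definition~\ref{defn:limit}). Writing the $H$-action on cocharacters out explicitly, $(u^{-1}\cdot\lambda)(a) = u^{-1}\lambda(a)u$, the condition that $u^{-1}\cdot\lambda$ centralizes $v$ reads $u^{-1}\lambda(a)u\cdot v = v$ for all $a\in k^*$; acting by $u$ shows this is equivalent to $\lambda(a)\cdot(u\cdot v) = u\cdot v$ for all $a$, i.e.\ to the clean statement that the point $w:=u\cdot v$ is fixed by $\lambda(k^*)$. I would record this reformulation first and then prove: $\underset{a\ra 0}{\lim}\lambda(a)\cdot v$ exists and equals $u\cdot v$ if and only if $\lambda(k^*)$ fixes $w$.

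For the implication assuming $w$ is $\lambda(k^*)$-fixed, the key is that $u\in R_u(P_\lambda)$ forces $u^{-1}\in R_u(P_\lambda)$, and hence $\underset{a\ra 0}{\lim}\lambda(a)u^{-1}\lambda(a)\inverse = 1$ by the description of $R_u(P_\lambda)$ in Definition~\ref{defn:rpars}. I would then rewrite
\[
\lambda(a)\cdot v = \big(\lambda(a)u^{-1}\lambda(a)\inverse\big)\cdot\big(\lambda(a)\cdot(u\cdot v)\big) = \big(\lambda(a)u^{-1}\lambda(a)\inverse\big)\cdot w,
\]
using $\lambda(a)\cdot w = w$ in the last step. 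Since $a\mapsto \lambda(a)u^{-1}\lambda(a)\inverse$ extends to a morphism $k\ra G$ sending $0$ to $1$, composing with the action morphism $G\ra V$, $g\mapsto g\cdot w$ (constant in the second coordinate), exhibits an extension of $a\mapsto\lambda(a)\cdot v$ to $k\ra V$ with value $1\cdot w = u\cdot v$ at $0$. This yields existence of the limit and its value in one stroke.

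For the converse I would assume the limit exists with value $w=u\cdot v$, so there is $\widehat\phi\colon k\ra V$ extending $\phi(a)=\lambda(a)\cdot v$ with $\widehat\phi(0)=w$, and show $\lambda(k^*)$ fixes $w$. Fix $b\in k^*$. The map $a\mapsto\lambda(b)\cdot\widehat\phi(a)$ is a morphism $k\ra V$ extending $a\mapsto\lambda(ba)\cdot v$, so its value at $0$ is $\lambda(b)\cdot w$; on the other hand $a\mapsto\widehat\phi(ba)$ is a morphism $k\ra V$ extending the \emph{same} map $a\mapsto\lambda(ba)\cdot v$, with value $\widehat\phi(0)=w$ at $0$. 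Uniqueness of the extension in Definition~\ref{defn:limit} forces $\lambda(b)\cdot w=w$; as $b$ was arbitrary, $\lambda(k^*)$ fixes $w$, which is the reformulated condition.

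The only delicate point, and the step I would be most careful about, is that both ``pulling $\lambda(b)$ through the limit'' and ``reparametrising $a\mapsto ba$'' must be justified via the morphism-extension definition rather than by a topological continuity argument, since the limits here are the scheme-theoretic ones of Definition~\ref{defn:limit}; the uniqueness clause built into that definition is exactly what makes the converse go through.
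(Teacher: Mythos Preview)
Your proof is correct. The paper does not give its own proof of this lemma; it simply cites \cite[Lem.~2.12]{GIT}. Your argument is a clean self-contained proof: the reformulation of ``$u^{-1}\cdot\lambda$ centralizes $v$'' as ``$\lambda(k^*)$ fixes $w=u\cdot v$'' is exactly right, the forward direction via the identity $\lambda(a)\cdot v = (\lambda(a)u^{-1}\lambda(a)^{-1})\cdot w$ combined with the limit characterisation of $R_u(P_\lambda)$ is the natural computation, and for the converse your use of the uniqueness clause in Definition~\ref{defn:limit} (comparing the extensions $a\mapsto\lambda(b)\cdot\widehat\phi(a)$ and $a\mapsto\widehat\phi(ba)$) is both correct and the right level of care for the scheme-theoretic limit.
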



The next result is \cite[Thm.~3.3]{GIT} in case $k = \ovl{k}$.

\begin{thm}
\label{thm:Ruconj}
Suppose $G$ 
acts on a variety $V$.
Let $v\in V$ and let $\lambda\in Y(G)$ such
that $v':=\underset{a\to 0}{\lim} \lambda(a)\cdot v$
exists and is $G$-conjugate to $v$.
Then $v'$ is $R_u(P_\lambda)$-conjugate to $v$.
\end{thm}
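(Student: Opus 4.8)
The plan is to show directly that $v$ lies in the single $R_u(P_\lambda)$-orbit of $v'$. First I record the basic structure. Since $v' = \lim_{a\to 0}\lambda(a)\cdot v$, one has $\lambda(b)\cdot v' = \lim_{a\to 0}\lambda(ba)\cdot v = v'$ for all $b\in k^*$, so $v'$ is fixed by $\lambda(k^*)$ and hence $\lambda(k^*)\subseteq C:=C_G(v')$. Because $v'$ is $G$-conjugate to $v$, the orbits coincide: set $O := G\cdot v = G\cdot v'$, a smooth, $\lambda$-stable, locally closed subvariety of $V$ containing $v'$ as a $\lambda$-fixed point. Let $F = \{w\in V \mid \lim_{a\to 0}\lambda(a)\cdot w \text{ exists and equals } v'\}$ be the attracting set of $v'$. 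The assertion to be proved is precisely that $v\in R_u(P_\lambda)\cdot v'$, and I note at the outset that $v\in F\cap O$, since $v\in O$ and $\lim_{a\to 0}\lambda(a)\cdot v = v'$.

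The first step is the elementary identity $R_u(P_\lambda)\cdot v' = (P_\lambda\cdot v')\cap F$. For $p\in P_\lambda$ one computes, using that $v'$ is $\lambda$-fixed, that $\lim_{a\to 0}\lambda(a)\cdot(p\cdot v') = \lim_{a\to 0}\bigl(\lambda(a)p\lambda(a)\inverse\bigr)\cdot v' = c_\lambda(p)\cdot v'$ (here Lemma \ref{lem:Ruconj} may be invoked to justify the existence and value of such limits). Hence $p\cdot v'\in F$ if and only if $c_\lambda(p)\in C\cap L_\lambda$; writing $p = u\ell$ with $u\in R_u(P_\lambda)$ and $\ell = c_\lambda(p)\in L_\lambda$, the condition $\ell\cdot v' = v'$ gives $p\cdot v' = u\cdot v'$, and conversely every $u\cdot v'$ lies in $F$. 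This proves the identity. Consequently, once I show $F\cap O\subseteq P_\lambda\cdot v'$, I will obtain $F\cap O = (P_\lambda\cdot v')\cap F = R_u(P_\lambda)\cdot v'$, and since $v\in F\cap O$ the theorem follows immediately.

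It therefore remains to prove that every $w = g\cdot v'\in O$ with $\lim_{a\to 0}\lambda(a)\cdot w = v'$ satisfies $g\in P_\lambda C$. Transporting to the homogeneous space $O\cong G/C$ and using $\lambda(k^*)\subseteq C$ (so that $\lambda(a)gC = \lambda(a)g\lambda(a)\inverse C$), the hypothesis becomes $\lim_{a\to 0}\lambda(a)\cdot(gC) = eC$ in $G/C$; what is needed is exactly that the attracting cell of the fixed point $eC$ equals the image of $P_\lambda$. This is the Bialynicki--Birula decomposition for the $k^*$-action induced by $\lambda$ on the smooth variety $G/C$, and I would establish it by lifting the contracting curve $a\mapsto \lambda(a)\cdot(gC)$ through the smooth quotient map $G\to G/C$ to a morphism from $k$ into $G$ (after replacing $\lambda$ by a positive power, which leaves $P_\lambda$, $L_\lambda$ and $R_u(P_\lambda)$ unchanged) and reading off membership in $P_\lambda C$ from the structure $P_\lambda = L_\lambda\ltimes R_u(P_\lambda)$. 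I expect this last step to be the main obstacle: controlling the limit behaviour of the lifted curve in arbitrary characteristic, where the stabilizer $C$ need not be reductive and the quotient map need not admit Zariski-local sections, so that the lifting must be handled carefully. Granting this Bialynicki--Birula identification, the chain $v\in F\cap O = R_u(P_\lambda)\cdot v'$ yields $v' \in R_u(P_\lambda)\cdot v$, as required.
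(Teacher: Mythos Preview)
The paper does not actually prove this theorem; it is quoted as \cite[Thm.~3.3]{GIT} (specialized to $k=\ovl{k}$) and cited without argument, so there is no in-paper proof to compare against.

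As for your proposal: the framework is sound. The identity $R_u(P_\lambda)\cdot v' = (P_\lambda\cdot v')\cap F$ is correct, and the reduction to showing $F\cap O\subseteq P_\lambda\cdot v'$---equivalently, that the attracting cell of the base point $eC$ in $G/C$ under left $\lambda$-translation is $P_\lambda C/C$---is the right reformulation. But your justification of this last step has a genuine gap, which you yourself flag. You call $G\to G/C$ a ``smooth quotient map''; in positive characteristic this can fail, because the scheme-theoretic stabilizer of $v'$ need not be reduced, and then the orbit map is not smooth and \'etale-local sections are not available. The form of Bialynicki--Birula you invoke needs smoothness (and usually some completeness or linearization hypothesis) to identify the attracting cell with the image of $P_\lambda$, so the argument as written breaks here. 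Even granting a lift $\tilde f:k\to G$ of the contracting curve, you have not explained how to conclude $g\in P_\lambda C$: knowing only that $\tilde f(a)\in \lambda(a)g\lambda(a)^{-1}C$ for $a\ne 0$ and $\tilde f(0)\in C$ does not by itself force $a\mapsto\lambda(a)g\lambda(a)^{-1}$ to extend across $0$ in $G$.

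If you want to repair the argument, one standard move is to linearize (embed $V$ $G$-equivariantly in a $G$-module $W$), so that $F=v'+W_+$ is an affine subspace on which $\lambda$ contracts linearly to $v'$, and then compare the closed, $\lambda$-stable set $R_u(P_\lambda)\cdot v'$ (closed by Kostant--Rosenlicht) with $O\cap F$ inside this affine space; a tangent-space/dimension count at $v'$ then replaces the curve-lifting. This avoids both the smoothness issue for $G\to G/C$ and any appeal to Bialynicki--Birula on a non-complete variety. That completion is essentially what is carried out in \cite{GIT}.
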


\subsection{Generic tuples}
\label{subsec:generictuple}

In order to establish the link between relative $G$-complete
reducibility with respect to $H$ and $H$-orbits of tuples
needed for Theorem \ref{thm:mainthm},
we require the following notion of a generic tuple,
see \cite[Def.~5.4]{GIT}.

\begin{defn}
\label{def:generictuple}
Let $K$ be a subgroup of $G$ and let $G\hookrightarrow\GL_m$
be an embedding of algebraic groups.
Then $\tuple{k} \in K^n$ is
called a \emph{generic tuple of $K$ for the embedding
$G\hookrightarrow\GL_m$} if $\tuple{k}$ generates the
associative subalgebra of $\Mat_m$ spanned by $K$.
We call $\tuple{k}\in K^n$ a \emph{generic tuple of $K$}
if it is a generic tuple of $K$ for some embedding $G\hookrightarrow\GL_m$.
\end{defn}

Clearly, generic tuples exist for any embedding
$G\hookrightarrow\GL_m$ for $n$ sufficiently large.
The main properties of generic tuples are given
by the next lemma, which is \cite[Lem.~5.5]{GIT}.

\begin{lem}
\label{lem:generictuple}
Let $K$ be a subgroup of $G$, let $\tuple{k}\in K^n$
be a generic tuple of $K$ for some embedding $G\hookrightarrow\GL_m$
and let $K'$ be the algebraic subgroup of $G$ generated by
$\tuple{k}$. Then we have:
\begin{enumerate}[{\rm(i)}]
\item $C_H(\tuple{k})=C_H(K')=C_H(K)$ for any subgroup $H$ of $G$;
\item $K'$ is contained in the same R-parabolic
and the same R-Levi subgroups of $G$ as $K$;
\item If $K\subseteq P_\lambda$ for some
$\lambda\in Y(G)$, then $c_\lambda(\tuple{k})$
is a generic tuple of $c_\lambda(K)$ for the
given embedding $G\hookrightarrow\GL_m$.
\end{enumerate}
\end{lem}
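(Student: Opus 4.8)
The plan is to reduce all three parts to a single linear-algebra observation. Let $A$ denote the associative subalgebra of $\Mat_m$ spanned by $K$; since $K$ is closed under multiplication, $A$ is just the $k$-linear span $\mathrm{span}_k(K)$. The key claim is that $\mathrm{span}_k(K')=A$ as well. Indeed, $K'\subseteq K$ because $K$ is a closed subgroup containing each component of $\tuple{k}$ and $K'$ is the smallest such subgroup, so $\mathrm{span}_k(K')\subseteq A$; conversely, genericity of $\tuple{k}$ means the components of $\tuple{k}$ generate $A$ as an associative algebra, so $A$ is spanned by words in these components, and every such word lies in the group $K'$, whence $A\subseteq \mathrm{span}_k(K')$. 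Given this, part (i) is quick: $C_H(\tuple{k})=C_H(K')$ is the general fact that an element of $H$ commutes with $\tuple{k}$ iff it commutes with the subgroup topologically generated by $\tuple{k}$, while $C_H(K')=C_H(K)$ follows because conjugation by a fixed $h$ is a linear map on $\Mat_m$, so $h$ fixes $K$ pointwise iff it fixes $\mathrm{span}_k(K)=A$ pointwise, and likewise for $K'$; the two spans agree, so the centralizers agree.

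For (ii) I would pass to the weight-space decomposition $\Mat_m=\bigoplus_i \Mat_m(i)$ for the action $x\mapsto \lambda(a)x\lambda(a)^{-1}$, where $\lambda(a)$ acts by $a^i$ on $\Mat_m(i)$. For $x\in\Mat_m$ the limit $\lim_{a\to 0}\lambda(a)x\lambda(a)^{-1}$ exists precisely when $x$ has no negative weights, i.e.\ $x\in\Mat_m(\ge 0):=\bigoplus_{i\ge 0}\Mat_m(i)$, and $\lambda(a)x\lambda(a)^{-1}=x$ precisely when $x\in\Mat_m(0)$. Since $G$ is closed in $\GL_m$, this yields $P_\lambda=G\cap\Mat_m(\ge 0)$ and $L_\lambda=G\cap\Mat_m(0)$. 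Hence $K\subseteq P_\lambda$ iff $A=\mathrm{span}_k(K)\subseteq\Mat_m(\ge 0)$, and the identical statement holds with $K'$ in place of $K$ because $\mathrm{span}_k(K')=A$; replacing $\Mat_m(\ge 0)$ by $\Mat_m(0)$ handles the R-Levi case. As the R-parabolic (resp.\ R-Levi) subgroups of $G$ are exactly the $P_\lambda$ (resp.\ $L_\lambda$), this proves that $K$ and $K'$ lie in the same ones.

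For (iii), observe that the weight-zero projection $\pi_0\colon \Mat_m(\ge 0)\to\Mat_m(0)$ is an algebra homomorphism, since for $x,y$ of non-negative weight the weight-zero part of $xy$ is $x_0y_0$; moreover $\pi_0$ agrees with $c_\lambda$ on $P_\lambda$. Assuming $K\subseteq P_\lambda$ we have $A\subseteq\Mat_m(\ge 0)$, and applying the homomorphism $\pi_0$ to the statement that $\tuple{k}$ generates $A$ shows that $\pi_0(A)$ is generated as an associative algebra by $\pi_0(\tuple{k})=c_\lambda(\tuple{k})$. On the other hand, linearity gives $\pi_0(A)=\mathrm{span}_k(\pi_0(K))=\mathrm{span}_k(c_\lambda(K))$, which is exactly the associative subalgebra spanned by $c_\lambda(K)\subseteq L_\lambda$. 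Thus $c_\lambda(\tuple{k})$ generates that algebra, which is the definition of a generic tuple of $c_\lambda(K)$ for the embedding $G\hookrightarrow\GL_m$.

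The one genuine subtlety, and the step I would treat most carefully, is the identification $P_\lambda=G\cap\Mat_m(\ge 0)$ and $L_\lambda=G\cap\Mat_m(0)$: that existence of the limit can be tested inside $\Mat_m$ rather than inside $\GL_m$. This uses that $G$ is closed in $\GL_m$ and that an invertible matrix with no negative $\lambda$-weights has invertible weight-zero block, so its limit is again invertible. Everything else is formal once the span equality $\mathrm{span}_k(K)=\mathrm{span}_k(K')$ is in place, which is the real crux of the lemma.
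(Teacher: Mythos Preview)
Your proof is correct. The paper does not actually prove this lemma; it is quoted from \cite[Lem.~5.5]{GIT}, so there is no ``paper's own proof'' to compare against. Your reduction to the single linear-algebra fact $\mathrm{span}_k(K')=\mathrm{span}_k(K)$ is exactly the right organizing principle, and your treatment of the one nontrivial point---that the limit in $\Mat_m$ of an invertible matrix with only non-negative $\lambda$-weights is again invertible, via the block-triangular form---is the correct way to justify $P_\lambda=G\cap\Mat_m(\ge 0)$.
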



\begin{rem}
\label{rem:genericgenerators}
If $K$ is a subgroup of $G$ which is topologically
generated by a tuple ${\tuple k}\in K^n$, then ${\tuple k}$
is a generic tuple of $K$ in the sense of Definition \ref{def:generictuple} \cite[Rem.~5.6]{GIT}.
\end{rem}

\section{Relative $G$-complete reducibility}
\label{sec:relative}
\subsection{Relative $G$-complete reducibility}
\label{subsec:relative}

The key idea for the
proof of Theorem \ref{thm:mainthm}, which is proved in the next
subsection, is the notion of relative $G$-complete reducibility, defined below.

\begin{defn}
\label{defn:relativelystrong}
Let $K$ be a subgroup of $G$ and let $H$ be any reductive subgroup of $G$.
We say that $K$ is
\emph{relatively $G$-completely reducible with respect to $H$} if
for every $\lambda \in Y(H)$ such that $K$ is contained in $P_\lambda$,
there exists $\mu \in Y(H)$ such that
$P_\lambda = P_\mu$ and $K \subseteq L_\mu$.
We sometimes use the shorthand \emph{relatively $G$-cr with respect to $H$}.
\end{defn}

\begin{rems}
\label{rem:relativelystrong}
(i).\ In the case $H=G$, Definition \ref{defn:relativelystrong}
coincides with the usual
definition of $G$-complete reducibility \cite[Sec.~1]{BMR}, \cite[3.2]{serre2};
we sometimes refer to this as the \emph{absolute case}.
Note that a subgroup of $G$ is relatively $G$-completely reducible
with respect to $H$ if and only if it is relatively $G$-completely
reducible with respect to $H^0$.  Thus we may assume without loss that $H$ is connected.  If $H^0$ is central in $G$, then
every subgroup of $G$ is relatively $G$-completely reducible
with respect to $H$.

(ii).\ If $K \subseteq H$, then $K$ is relatively $G$-cr with respect to $H$
if and only if $K$ is $H$-cr (this follows from Lemma \ref{lem:relativelystrong}(ii) below).


(iii).\ In characteristic zero a
subgroup of $G$ is $G$-completely reducible if and only
if it is reductive (cf.\ \cite[Lem.~2.6]{BMR}). We don't know of any simple characterization of
relative $G$-complete reducibility in this case (this complicates the proof of Proposition \ref{prop:SnormalizesK}).
So relative $G$-complete reducibility appears to be a new notion even in characteristic zero.

(iv).\
We note that one
of the basic properties \cite[Prop.~4.1]{serre2} of $G$-cr subgroups
is not inherited by relatively $G$-cr subgroups in general:
if we take $H\subseteq Z(G)$, then all subgroups of $G$ are relatively $G$-cr
with respect to $H$.  In particular, it is possible for
non-reductive (even unipotent) subgroups to be relatively
$G$-cr with respect to
a subgroup $H$.

(v).\
As noted in (iv), in general a relatively $G$-cr subgroup of $G$ need not be
$G$-cr. Also a $G$-cr subgroup need not be relatively $G$-cr.
For instance, let $L$ be an  R-Levi subgroup
of some R-parabolic subgroup $P$ of $G$. Then $L$ is $G$-cr
by \cite[Prop.\ 3.2]{serre2}, \cite[\S 6.3]{BMR}.  Let $M$ be any other R-Levi
subgroup of $P$. Then for any maximal torus $T$ of $G$ that lies in $M$ we
have that $L$ is not relatively $G$-cr with respect to $T$.
For there exists $\lambda \in Y(T)$ with $P = P_\lambda$
and for any such $\lambda$, we have $L_\lambda = M$ by \cite[Cor.~6.5]{BMR}; hence $L\not\subseteq L_\lambda$.  Here is another example: there exists reductive $G$ with a reductive subgroup $H$ and a subgroup $K$ of $H$ such that $K$ is $G$-cr but not $H$-cr \cite[Prop.~7.7]{BMRT}; then $K$ is not relatively $G$-cr with respect to $H$ by (ii) above.
\end{rems}

For examples of relatively $G$-cr subgroups, see
Section \ref{sect:ex} where we specifically study the
case when $G = \GL(V)$.

The following lemma gives some detailed information about conjugacy
of R-Levi subgroups in the subgroups $P_\lambda$ and $P_\lambda(H)$ for $\lambda \in Y(H)$.

\begin{lem}
\label{lem:relativelystrong}
Let $H$ be a reductive subgroup of $G$.
\begin{enumerate}[{\rm (i)}]
\item Let $\lambda,\mu\in Y(H)$ such that $P_\lambda=P_\mu$ and
let $u$ be the element of $R_u(P_\lambda(H))$ such that
$uL_\lambda(H)u\inverse=L_\mu(H)$.
Then $uL_\lambda u\inverse=L_\mu$.
\item Let $K$ be a subgroup of $G$. Then $K$ is relatively
$G$-completely reducible with respect to $H$ if and only if
for every $\lambda \in Y(H)$ such that
$K \subseteq P_\lambda$ there exists $u \in R_u(P_\lambda(H))$
such that $K \subseteq L_{u \cdot \lambda}$.
\end{enumerate}
\end{lem}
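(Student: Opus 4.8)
The plan is to prove the two parts of Lemma~\ref{lem:relativelystrong} in order, using part (i) as the main technical tool for part (ii).

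For part (i), I would exploit the basic structure theory of R-parabolic and R-Levi subgroups recalled in Definition~\ref{defn:rpars}. Since $P_\lambda = P_\mu$, the two cocharacters $\lambda,\mu \in Y(H)$ determine the same R-parabolic subgroup of $G$ and hence, by intersecting with $H$, the same R-parabolic subgroup $P_\lambda(H) = P_\mu(H)$ of $H$. Now $L_\lambda(H)$ and $L_\mu(H)$ are two R-Levi subgroups of this R-parabolic subgroup of $H$, so by the simply-transitive action of $R_u(P_\lambda(H))$ on R-Levi subgroups (Definition~\ref{defn:rpars}, applied inside $H$), there is a unique $u \in R_u(P_\lambda(H))$ with $u L_\lambda(H) u^{-1} = L_\mu(H)$. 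The content of (i) is that this \emph{same} $u$ conjugates the larger R-Levi $L_\lambda$ to $L_\mu$ in $G$. The clean way to see this is to observe that conjugation by $u$ sends $P_\lambda$ to $P_{u\cdot\lambda}$ and $L_\lambda$ to $L_{u\cdot\lambda}$; since $u \in R_u(P_\lambda)$ we have $P_{u\cdot\lambda} = P_\lambda = P_\mu$, so $u L_\lambda u^{-1} = L_{u\cdot\lambda}$ is an R-Levi subgroup of $P_\mu$ in $G$. I then need to identify it with $L_\mu$, and here I would use that $u\cdot\lambda$ and $\mu$ both lie in $Y(H)$ and that $u L_\lambda(H) u^{-1} = L_{u\cdot\lambda}(H) = L_\mu(H)$ forces $u\cdot\lambda$ and $\mu$ to be $H$-conjugate by an element already accounted for; the simple-transitivity inside $G$ (again Definition~\ref{defn:rpars}) then pins down $L_{u\cdot\lambda} = L_\mu$ because an R-Levi of $P_\mu$ in $G$ is determined by its intersection with $H$ once we know it arises from a cocharacter of $H$.

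For part (ii), the forward implication is essentially immediate: if $K$ is relatively $G$-cr with respect to $H$ and $K \subseteq P_\lambda$ for some $\lambda \in Y(H)$, then by definition there is $\mu \in Y(H)$ with $P_\mu = P_\lambda$ and $K \subseteq L_\mu$; taking the $u \in R_u(P_\lambda(H))$ from part (i) with $u L_\lambda(H) u^{-1} = L_\mu(H)$, part (i) gives $u L_\lambda u^{-1} = L_\mu$, i.e. $L_\mu = L_{u\cdot\lambda}$, so $K \subseteq L_{u\cdot\lambda}$ as required. Conversely, suppose that for every $\lambda \in Y(H)$ with $K \subseteq P_\lambda$ there exists $u \in R_u(P_\lambda(H))$ with $K \subseteq L_{u\cdot\lambda}$. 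Given such a $\lambda$, set $\mu := u\cdot\lambda$; since $u \in H$ we have $\mu \in Y(H)$, and since $u \in R_u(P_\lambda)$ we have $P_\mu = P_{u\cdot\lambda} = u P_\lambda u^{-1} = P_\lambda$, while $L_\mu = L_{u\cdot\lambda} \supseteq K$. This is exactly the defining condition for $K$ to be relatively $G$-cr with respect to $H$.

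The main obstacle I anticipate is the final identification $L_{u\cdot\lambda} = L_\mu$ in part (i): I must be careful that the element $u$ produced by simple transitivity \emph{inside $H$} is genuinely the one that does the job \emph{inside $G$}, rather than merely some element of $R_u(P_\lambda)$ that happens to conjugate $L_\lambda$ to $L_\mu$. The key point is that $u$ lies in $R_u(P_\lambda(H)) = R_u(P_\lambda) \cap H$, so conjugation by $u$ respects cocharacters of $H$, and the equalities $R_u(P_\lambda(H)) = R_u(P_\lambda) \cap H$ and $L_\lambda(H) = L_\lambda \cap H$ from Section~\ref{subsec:noncon} let me transfer the simply-transitive statement for $H$ up to one for $G$ compatibly. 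Once this compatibility is handled cleanly, the rest is bookkeeping with the formulas $P_{h\cdot\lambda} = hP_\lambda h^{-1}$ and $L_{h\cdot\lambda} = hL_\lambda h^{-1}$.
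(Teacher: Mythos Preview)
Your handling of part (ii) is correct and matches the paper's (which just says it follows from part (i) and the definition).

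However, your argument for part (i) has a genuine gap precisely at the point you flag as the ``main obstacle'': the identification $L_{u\cdot\lambda} = L_\mu$. You say that ``the simple-transitivity inside $G$ then pins down $L_{u\cdot\lambda} = L_\mu$ because an R-Levi of $P_\mu$ in $G$ is determined by its intersection with $H$ once we know it arises from a cocharacter of $H$.'' But this last clause is exactly the content of part (i) --- it is what you are trying to prove, not something you may assume. Simple transitivity in $G$ only tells you there is a unique $v \in R_u(P_\lambda)$ with $vL_{u\cdot\lambda}v^{-1} = L_\mu$; you have no a priori reason why $v \in H$, nor why $v = 1$. Your appeal to ``transferring the simply-transitive statement for $H$ up to one for $G$ compatibly'' is not an argument.

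The paper closes this gap with a common-maximal-torus argument. Writing $\mu' = u\cdot\lambda$, one observes that $L_{\mu'}(H) = L_\mu(H)$ forces both $\mu$ and $\mu'$ to lie in $Y(Z(L_\mu(H))^0)$, since each cocharacter is central in its own R-Levi of $H$. Now choose a maximal torus $T$ of $L_\mu$ (in $G$) containing the torus $Z(L_\mu(H))^0$. Then $\mu' \in Y(T)$ gives $T \subseteq C_G(\mu'(k^*)) = L_{\mu'}$, so $L_\mu$ and $L_{\mu'}$ are both R-Levi subgroups of $P_\lambda$ containing the common maximal torus $T$. By \cite[Cor.~6.5]{BMR}, an R-Levi subgroup of an R-parabolic is uniquely determined by any maximal torus it contains, and hence $L_\mu = L_{\mu'}$. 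This is the missing ingredient in your proposal.
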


\begin{proof}
(i).\
Set $\mu' = u\cdot \lambda$; then $\mu' \in Y(H)$, since $u \in
R_u(P_\lambda(H))$.
We have $L_{\mu'}(H) = uL_\lambda u^{-1}= L_\mu(H)$, so $\mu, \mu' \in Y(Z(L_\mu(H))^0)$.
Let $T$ be a maximal torus of $L_\mu$ containing $Z(L_\mu(H))^0$.
Then since $\mu' \in Y(T)$, we have $T \subseteq L_{\mu'}$,
and $L_{\mu'}$ is an R-Levi subgroup of $P_\lambda$.
Thus $L_\mu$ and $L_{\mu'}$ are R-Levi subgroups of
$P_\lambda$ containing a common maximal torus, so
they are equal, by \cite[Cor.\ 6.5]{BMR}.

(ii).\
This is clear from part (i) and Definition \ref{defn:relativelystrong}.
\end{proof}

\begin{rem}
\label{rem:KvsK'}
Let $H$ be a reductive subgroup of $G$.
Let $K$ be a subgroup of $G$, let $\tuple{k}\in K^n$
be a generic tuple of $K$ and let $K'$ be the
algebraic subgroup of $G$ generated by
$\tuple{k}$. Then it follows from
Lemma \ref{lem:generictuple}(ii) that
$K$ is relatively $G$-cr with respect to $H$ if and only if
$K'$ is.
\end{rem}

\subsection{Relative $G$-complete reducibility and closed orbits}

Throughout the rest of this section, $H$ denotes a reductive subgroup of $G$.
The following result is \cite[Thm.\ 5.8]{GIT} generalized to our
relative setting. In view of Remark \ref{rem:genericgenerators},
the final assertion in part (iii) gives Theorem \ref{thm:mainthm}.
The proof of \cite[Thm.\ 5.8]{GIT} goes through
here with the obvious modifications and alterations of notation, replacing \cite[Thm.~3.3]{GIT} with Theorem \ref{thm:Ruconj} and \cite[Lem.~5.5]{GIT} with Lemma \ref{lem:generictuple}.

\begin{thm}\ 
\label{thm:orbitcrit}
Let $H$ be a
reductive subgroup of $G$.
\begin{enumerate}[{\rm (i)}]
\item Let $n\in {\mathbb N}$,
let $\tuple{k}\in G^n$ and let $\lambda\in Y(H)$
such that $\tuple{m}:=\lim_{a\ra 0}\lambda(a)\cdot \tuple{k}$ exists.
Then the following are equivalent:
\begin{enumerate}[{\rm (a)}]
\item $\tuple{m}$ is $H$-conjugate to $\tuple{k}$;
\item $\tuple{m}$ is $R_u(P_\lambda(H))$-conjugate to $\tuple{k}$;
\item $\dim H\cdot\tuple{m}=\dim H\cdot\tuple{k}$.
\end{enumerate}
\item Let $K$ be a subgroup of $G$ and let $\lambda\in Y(H)$.
Suppose $K\subseteq P_\lambda$ and set $M= c_\lambda(K)$.
Then $\dim C_H(M)\geq \dim C_H(K)$ and the following are equivalent:
\begin{enumerate}[{\rm (a)}]
\item $M$ is $H$-conjugate to $K$;
\item $M$ is $R_u(P_\lambda(H))$-conjugate to $K$;
\item $K \subseteq L_\mu$ for some $\mu\in Y(H)$
such that $P_\lambda = P_\mu$;
\item $\dim C_H(M)=\dim C_H(K)$.
\end{enumerate}
\item Let $K$, $\lambda$ and $M$ be as in (ii) and let
$\tuple{k}\in K^n$ be a generic tuple of $K$.
Then the assertions in (i) are equivalent to those in (ii).
    In particular, $K$ is relatively $G$-completely reducible
with respect to $H$ if and only if $H\cdot\tuple{k}$ is closed in $G^n$.
\end{enumerate}
\end{thm}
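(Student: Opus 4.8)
The plan is to obtain the final ``in particular'' statement as a formal consequence of the equivalences in parts (i)--(iii) together with the Hilbert--Mumford theorem, and to prove those equivalences by transporting between the geometry of the $H$-action on $G^n$ and the group theory of $K$ through a generic tuple. Throughout I would apply the general results of Section~\ref{sub:git} to the $H$-variety $V=G^n$, recalling that any existing limit $\lim_{a\to 0}\lambda(a)\cdot v$ lies in $\ovl{H\cdot v}$.

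For part (i), the implications (b)$\Rightarrow$(a) and (a)$\Rightarrow$(c) are immediate: $R_u(P_\lambda(H))\subseteq H$, and $H$-conjugate tuples have equidimensional orbits. The implication (a)$\Rightarrow$(b) is precisely Theorem~\ref{thm:Ruconj} for the $H$-variety $G^n$, with $v=\tuple{k}$ and $v'=\tuple{m}$. For (c)$\Rightarrow$(a) I would use that $\tuple{m}$ lies in $\ovl{H\cdot\tuple{k}}$, whose irreducible components are the closures of the finitely many $H^0$-orbits making up $H\cdot\tuple{k}$, each of dimension $\dim H\cdot\tuple{k}$; since $\dim H\cdot\tuple{m}=\dim H\cdot\tuple{k}$, the orbit $H^0\cdot\tuple{m}$ is dense in the component containing $\tuple{m}$, which is then the closure of an $H^0$-orbit inside $H\cdot\tuple{k}$. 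As two dense orbits in an irreducible variety must meet and hence coincide, $\tuple{m}\in H\cdot\tuple{k}$, giving (a).

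For part (ii) and the bridge in (iii) I would fix a generic tuple $\tuple{k}\in K^n$ of $K$ (these exist for $n$ large) and write $K'$ for the subgroup it generates. By Lemma~\ref{lem:generictuple}, $\tuple{m}:=c_\lambda(\tuple{k})=\lim_{a\to 0}\lambda(a)\cdot\tuple{k}$ is a generic tuple of $M=c_\lambda(K)$, while $C_H(K)=C_H(\tuple{k})$ and $C_H(M)=C_H(\tuple{m})$. The orbit-dimension formula $\dim H\cdot\tuple{k}=\dim H-\dim C_H(\tuple{k})$ (and likewise for $\tuple{m}$) then turns $\tuple{m}\in\ovl{H\cdot\tuple{k}}$ into the inequality $\dim C_H(M)\ge\dim C_H(K)$ and identifies condition (ii)(d) with condition (i)(c); this is the content of (iii). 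I would then close the cycle $(c)\Rightarrow(b)\Rightarrow(a)\Rightarrow(d)\Rightarrow(c)$, where (b)$\Rightarrow$(a) and (a)$\Rightarrow$(d) are trivial. For (c)$\Rightarrow$(b), Lemma~\ref{lem:relativelystrong} lets me take $\mu=u\cdot\lambda$ with $u\in R_u(P_\lambda(H))$; since $c_\lambda$ is a homomorphism with kernel $R_u(P_\lambda)$, for $g\in K\subseteq L_{u\cdot\lambda}=uL_\lambda u\inverse$ one computes $c_\lambda(g)=c_\lambda(u\inverse g u)=u\inverse g u$, whence $M=c_\lambda(K)=u\inverse K u$. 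For the crucial (d)$\Rightarrow$(c): condition (d) equals (i)(c), so part~(i) yields $\tuple{m}=v\cdot\tuple{k}$ with $v\in R_u(P_\lambda(H))$; as $\tuple{m}=\lim_{a\to 0}\lambda(a)\cdot\tuple{k}$, Lemma~\ref{lem:Ruconj} shows $v\inverse\cdot\lambda$ centralizes $\tuple{k}$, so $K'\subseteq L_{v\inverse\cdot\lambda}$, and then Lemma~\ref{lem:generictuple}(ii) gives $K\subseteq L_\mu$ for $\mu=v\inverse\cdot\lambda\in Y(H)$ with $P_\mu=P_\lambda$.

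Finally I would assemble the equivalence with closedness. Since $\tuple{k}$ is a generic tuple of $K$, the limit $\lim_{a\to 0}\lambda(a)\cdot\tuple{k}$ exists precisely when $K'\subseteq P_\lambda$, equivalently when $K\subseteq P_\lambda$ (Lemma~\ref{lem:generictuple}(ii)). By Definition~\ref{defn:relativelystrong} and the equivalence of (ii)(c) with (i)(a), $K$ is relatively $G$-cr with respect to $H$ if and only if for every $\lambda\in Y(H)$ for which this limit exists, the limit is $H$-conjugate to $\tuple{k}$, i.e.\ lies in $H\cdot\tuple{k}$. By the Hilbert--Mumford theorem \cite[Thm.\ 1.4]{kempf} applied to the $H$-action on $G^n$, this last condition holds for all such $\lambda$ exactly when $H\cdot\tuple{k}$ is closed, which is the desired statement. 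The main obstacle is the implication (d)$\Rightarrow$(c) of part~(ii): upgrading a mere equality of centralizer dimensions to an honest conjugacy placing $K$ inside an R-Levi subgroup. Its resolution rests on the geometric input of Theorem~\ref{thm:Ruconj}, transported to the subgroup setting by the generic-tuple dictionary of Lemma~\ref{lem:generictuple} and the limit criterion of Lemma~\ref{lem:Ruconj}.
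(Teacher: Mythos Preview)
Your proof is correct and follows essentially the same approach as the paper's (which simply cites \cite[Thm.~5.8]{GIT} with the obvious relative modifications): Theorem~\ref{thm:Ruconj} for (a)$\Leftrightarrow$(b) in part~(i), the orbit-closure dimension argument for (c)$\Rightarrow$(a), the generic-tuple dictionary of Lemma~\ref{lem:generictuple} together with Lemma~\ref{lem:Ruconj} and Lemma~\ref{lem:relativelystrong} to pass between (i) and (ii), and the Hilbert--Mumford Theorem for the final statement. Your treatment of (c)$\Rightarrow$(a) in part~(i) spells out explicitly, via the $H^0$-orbit decomposition, what the paper abbreviates as a citation of \cite[Prop.~I.1.8]{Bo}, but the content is the same.
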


\begin{cor}
\label{cor:GvsM}
Let $M$ be a reductive subgroup of $G$.
Let $K$ and $H$ be subgroups of $M$
and assume that $H$ is reductive.
Then $K$ is relatively $G$-completely reducible with respect to $H$
if and only if it is
relatively $M$-completely reducible with respect to $H$.
\end{cor}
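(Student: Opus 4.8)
The plan is to translate both notions of relative complete reducibility into statements about the closedness of a single $H$-orbit, via the orbit criterion in Theorem~\ref{thm:orbitcrit}(iii), and then to observe that closedness of this orbit is detected identically in $G^n$ and in $M^n$. The whole argument is then just two applications of that criterion glued together by an elementary topological fact.

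First I would fix an embedding $G \hookrightarrow \GL_m$ of algebraic groups; restricting along the inclusion $M \subseteq G$ gives an embedding $M \hookrightarrow \GL_m$. For $n$ sufficiently large I would then choose a tuple $\tuple{k} \in K^n$ generating the associative subalgebra of $\Mat_m$ spanned by $K$. The key point is that this subalgebra depends only on $K$ and on the embedding into $\GL_m$, not on whether we regard $K$ as a subgroup of $G$ or of $M$; hence the same $\tuple{k}$ is simultaneously a generic tuple of $K$ for the embedding $G \hookrightarrow \GL_m$ and for the restricted embedding $M \hookrightarrow \GL_m$ (Definition~\ref{def:generictuple}). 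This compatibility is the one place that needs care, and it is precisely what allows me to use a single tuple on both sides; I would also invoke Remark~\ref{rem:KvsK'} to recall that relative complete reducibility does not depend on the particular generic tuple chosen.

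With $\tuple{k}$ in hand I would apply Theorem~\ref{thm:orbitcrit}(iii) twice. Applied to the reductive group $G$ with its reductive subgroup $H$, it yields that $K$ is relatively $G$-completely reducible with respect to $H$ if and only if $H\cdot\tuple{k}$ is closed in $G^n$. Applied to the reductive group $M$ --- here $H$ is a reductive subgroup of $M$ and $K$ is a subgroup of $M$, exactly the hypotheses required --- it yields that $K$ is relatively $M$-completely reducible with respect to $H$ if and only if $H\cdot\tuple{k}$ is closed in $M^n$. Thus the corollary reduces to showing that $H\cdot\tuple{k}$ is closed in $G^n$ precisely when it is closed in $M^n$.

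Finally I would settle this by the following observation. Since $M$ is a (closed) subgroup of $G$, the product $M^n$ is closed in $G^n$, and $H\cdot\tuple{k} \subseteq M^n$ because $\tuple{k}\in M^n$ and $H\subseteq M$. Hence the closure of $H\cdot\tuple{k}$ computed in $G^n$ is contained in $M^n$, and therefore coincides with its closure computed in $M^n$. Consequently $H\cdot\tuple{k}$ is closed in $G^n$ if and only if it is closed in $M^n$, and combining this with the two applications of Theorem~\ref{thm:orbitcrit}(iii) completes the proof. I do not expect a genuine obstacle: the only subtlety is the generic-tuple compatibility isolated in the second paragraph, and everything else is formal.
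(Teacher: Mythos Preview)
Your proposal is correct and is essentially the same argument as the paper's own proof: both fix an embedding $G\hookrightarrow\GL_m$, observe that a generic tuple of $K$ for this embedding is also generic for the restricted embedding $M\hookrightarrow\GL_m$, apply Theorem~\ref{thm:orbitcrit}(iii) on each side, and finish with the remark that a subset of $M^n$ is closed in $M^n$ if and only if it is closed in $G^n$. You have simply spelled out more of the details; there is no substantive difference in approach.
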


\begin{proof}
Clearly, if $\tuple{k}$ is a generic tuple of $K$ with respect to an embedding of $G$ in ${\rm GL}_m$, then it is a generic tuple of $K$ with respect to the embedding of $M$ in ${\rm GL}_m$ obtained by restriction.  Since a subset of $M^n$ is closed
if and only if it is closed in $G^n$,
the result follows immediately from Theorem \ref{thm:orbitcrit}(iii).
\end{proof}

We have an analogue of part of \cite[Prop.~3.12]{BMR}:

\begin{cor}
\label{cor:bmr3.12}
Let $H$ be a reductive subgroup of $G$, and let $K$ be a subgroup of $G$ which
is relatively $G$-completely reducible with respect to  $H$.
Then $C_H(K)$ is reductive.
\end{cor}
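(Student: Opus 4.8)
The plan is to reduce the statement to the foundational fact that stabilizers of points lying in closed orbits of reductive group actions are reductive, and to produce such a closed orbit using the orbit criterion of Theorem~\ref{thm:orbitcrit}.

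First I would fix an embedding $G\hookrightarrow\GL_m$ and, taking $n$ large enough, choose a generic tuple $\tuple{k}\in K^n$ of $K$; such a tuple exists by the remarks preceding Lemma~\ref{lem:generictuple}. The $H$-stabilizer of the point $\tuple{k}$ for the simultaneous conjugation action of $H$ on $G^n$ is exactly $\{h\in H: h\cdot\tuple{k}=\tuple{k}\}=C_H(\tuple{k})$, and by Lemma~\ref{lem:generictuple}(i) we have $C_H(\tuple{k})=C_H(K)$. Hence it suffices to show that this stabilizer is reductive.

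Next, since $K$ is relatively $G$-completely reducible with respect to $H$ and $\tuple{k}$ is a generic tuple of $K$, Theorem~\ref{thm:orbitcrit}(iii) shows that the orbit $H\cdot\tuple{k}$ is closed in $G^n$. Thus $\tuple{k}$ is a point with closed $H$-orbit in the affine $H$-variety $G^n$, and its stabilizer is $C_H(K)$. Finally I would invoke the standard result from geometric invariant theory that, when a reductive group $H$ acts on an affine variety and $v$ lies in a closed $H$-orbit, the stabilizer $C_H(v)$ is reductive; this holds over any algebraically closed field and is one of the foundational facts underlying \cite{BMR} and \cite{GIT}. Applying it with $v=\tuple{k}$ yields that $C_H(K)$ is reductive.

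The only non-formal ingredient, and hence the main obstacle, is the geometric invariant theory fact just quoted: its characteristic-free validity is precisely what allows the conclusion without a hypothesis on $\Char k$ (note that the converse Matsushima implication, affine quotient $\Rightarrow$ reductive subgroup, can fail in positive characteristic, so one should be careful to invoke the correct general statement). Everything else is a routine passage between the subgroup $K$ and a generic tuple via Lemma~\ref{lem:generictuple}, combined with the orbit criterion. It is worth emphasising that, unlike the absolute case, one cannot simply produce a Borel--Tits R-parabolic of $C_H(K)$ inside $H$ and appeal to relative $G$-complete reducibility, since relatively $G$-cr subgroups need not be reductive (Remark~\ref{rem:relativelystrong}(iv)); routing the argument through the closed orbit is what makes it go through.
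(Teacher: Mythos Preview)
Your proposal is correct and follows essentially the same route as the paper: pick a generic tuple $\tuple{k}$ of $K$, use Theorem~\ref{thm:orbitcrit}(iii) to get that $H\cdot\tuple{k}$ is closed (hence affine), identify $C_H(\tuple{k})=C_H(K)$ via Lemma~\ref{lem:generictuple}(i), and then invoke the fact that stabilizers of closed orbits under reductive group actions are reductive. The paper cites \cite[Lem.~10.1.3]{rich1} for this last step, which is precisely the characteristic-free statement you were careful to isolate.
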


\begin{proof}
Let $\tuple k$ be a generic tuple of $K$.
Since $K$ is relatively $G$-cr with respect to  $H$, the orbit
$H\cdot \tuple k$ is closed
in $G^n$, by Theorem \ref{thm:orbitcrit}(iii), and therefore affine.
Hence $C_H(\tuple k) = C_H(K)$ is reductive, by
\cite[Lem.~10.1.3]{rich1}.
\end{proof}

\subsection{Relative $G$-complete reducibility for Lie subalgebras of $\mathfrak{g}$}

It is straightforward to extend our definitions and results to
Lie subalgebras of $\mathfrak{g}$.
We first record a standard result which gives some properties
of the Lie algebras of R-parabolic and R-Levi subgroups of $G$
(cf.~\cite[\S 2.1]{rich}).

\begin{lem}\label{lem:liealgebrasofRpars}
For $\lambda\in Y(G)$, put $\pp_\lambda=\Lie(P_\lambda)$ and
$\frakl_\lambda=\Lie(L_\lambda)$.
Let $x\in\gg$. Then
\begin{enumerate}[{\rm(i)}]
\item $x\in\pp_\lambda$ if and only if $\underset{a\to 0}{\lim}\,
\lambda(a)\cdot x$ exists;
\item $x\in\frakl_\lambda$ if and only if $\underset{a\to 0}{\lim}\,
\lambda(a)\cdot x$ exists and equals $x$ if and only if $\lambda(k^*)$ centralizes $x$;
\item $x\in\Lie(R_u(P_\lambda))$ if and only if $\underset{a\to 0}{\lim}\,
\lambda(a)\cdot x$ exists and equals $0$.
\end{enumerate}
\end{lem}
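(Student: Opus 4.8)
The plan is to reduce all three statements to the weight-space decomposition of $\gg$ under the adjoint action of the torus $\lambda(k^*)$. Since $\lambda(k^*)$ is diagonalizable, $\Ad\circ\lambda$ decomposes $\gg$ into weight spaces $\gg=\bigoplus_{i\in\ZZ}\gg_i$, where $\gg_i=\{x\in\gg\mid \Ad(\lambda(a))x=a^i x \text{ for all }a\in k^*\}$. Writing $x=\sum_i x_i$ with $x_i\in\gg_i$, we have $\lambda(a)\cdot x=\sum_i a^i x_i$, which extends to a morphism at $a=0$ precisely when $x_i=0$ for all $i<0$, the resulting limit being $x_0$. Hence, as a purely linear-algebraic matter, the limit in question exists iff $x\in\bigoplus_{i\geq 0}\gg_i$, it exists and equals $x$ iff $x\in\gg_0$, and it exists and equals $0$ iff $x\in\bigoplus_{i>0}\gg_i$. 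The same computation shows that $\lambda(k^*)$ centralizes $x$ iff $x\in\gg_0$, which already yields the equivalence of the two limit conditions in (ii) with the centralizer condition.

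It therefore remains to identify the three weight subspaces with $\frakl_\lambda$, $\pp_\lambda$, and $\Lie(R_u(P_\lambda))$. The first is immediate: since $L_\lambda=C_G(\lambda(k^*))$ and the Lie algebra of the centralizer of a torus is its fixed-point subspace, we get $\frakl_\lambda=\gg^{\lambda(k^*)}=\gg_0$. For the remaining two I would use the semidirect decomposition $P_\lambda=L_\lambda\ltimes R_u(P_\lambda)$ recorded in Definition \ref{defn:rpars}, which gives $\pp_\lambda=\frakl_\lambda\oplus\Lie(R_u(P_\lambda))$ as vector spaces. Combined with $\frakl_\lambda=\gg_0$, the identifications $\pp_\lambda=\bigoplus_{i\geq0}\gg_i$ (statement (i)) and $\Lie(R_u(P_\lambda))=\bigoplus_{i>0}\gg_i$ (statement (iii)) become equivalent, so everything reduces to proving $\Lie(R_u(P_\lambda))=\bigoplus_{i>0}\gg_i$.

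This last identification is the main obstacle. Since $\lambda(k^*)$ normalizes $R_u(P_\lambda)$, its Lie algebra is $\lambda(k^*)$-stable and hence graded, $\Lie(R_u(P_\lambda))=\bigoplus_i\mathfrak{u}_i$ with $\mathfrak{u}_i\subseteq\gg_i$; moreover $\mathfrak{u}_0\subseteq\frakl_\lambda\cap\Lie(R_u(P_\lambda))=0$ by the directness of the sum above. To rule out negative weights I would exploit the explicit description $R_u(P_\lambda)=\{g\mid \lim_{a\to0}\lambda(a)g\lambda(a)\inverse=1\}$: a tangent vector $x\in\mathfrak{u}_i$ is realised by a curve $\gamma(t)$ in $R_u(P_\lambda)$, and conjugating by $\lambda(a)$ and differentiating shows that the required contraction to the identity is incompatible with $i<0$. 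The subtlety is that the naive exponential version of this argument (namely $\lambda(a)\exp(tx)\lambda(a)\inverse=\exp(t a^i x)\to 1$) is only valid in characteristic zero; so in general I would instead combine the grading and the vanishing of $\mathfrak{u}_0$ with the dimension count $\dim R_u(P_\lambda)=\dim\bigoplus_{i>0}\gg_i$ coming from the standard structure theory of R-parabolic subgroups (reducing to the connected case and the root-space decomposition, cf.~\cite[\S 2.1]{rich}). This forces $\mathfrak{u}_i=\gg_i$ for $i>0$ and $\mathfrak{u}_i=0$ for $i<0$, completing the proof.
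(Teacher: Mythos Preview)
Your argument is correct. The paper itself gives no proof of this lemma, recording it as a standard fact with a reference to Richardson \cite[\S 2.1]{rich}; the implicit route (visible in a commented-out sketch elsewhere in the source) is to choose a maximal torus $T$ with $\lambda\in Y(T)$, decompose $\gg$ into $T$-weight spaces, and read off all three statements simultaneously from the standard description of $P_\lambda$, $L_\lambda$, $R_u(P_\lambda)$ in terms of root subgroups $U_\alpha$ with $\langle\alpha,\lambda\rangle\geq 0$, $=0$, $>0$ respectively.

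Your approach works directly with the $\lambda(k^*)$-grading $\gg=\bigoplus_i\gg_i$ rather than a full maximal torus, which is conceptually clean and yields (ii) immediately from $L_\lambda=C_G(\lambda(k^*))$ together with the smoothness of torus centralizers. The only detour is in your handling of (iii): the tangent-curve argument you try first does not survive positive characteristic (as you acknowledge), and the dimension count you fall back on is precisely the root-space input from \cite[\S 2.1]{rich}. So both arguments ultimately rest on the same structure theory; the maximal-torus route just invokes it once at the start instead of at the end, and thereby avoids the intermediate reduction via $\pp_\lambda=\frakl_\lambda\oplus\Lie(R_u(P_\lambda))$.
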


\begin{defn}
\label{def:relLieGcr}
For $\lambda\in Y(G)$ define the subalgebras $\pp_\lambda$ and $\frakl_\lambda$
of $\gg$ as in Lemma~\ref{lem:liealgebrasofRpars}.
Let $\kk$ be a subalgebra of $\gg$ and let $H$ be any reductive subgroup of $G$.
We call $\kk$
\emph{relatively $G$-completely reducible with respect to $H$} if
for every $\lambda \in Y(H)$ such that $\kk\subseteq\pp_\lambda$,
there exists $\mu \in Y(H)$ such that
$P_\lambda = P_\mu$ and $\kk \subseteq \frakl_\mu$.
In case $H = G$, we say
$\kk$ is \emph{$G$-completely reducible}.
\end{defn}


The following can be shown with the same arguments as
Theorem~\ref{thm:orbitcrit}. We emphasize that
Theorem~\ref{thm:relLieGcr}(iii)
characterizes the closed $H$-orbits in $\gg^n$.  We define the map $c_\lambda\colon \pp_\lambda\ra \frakl_\lambda$ for Lie algebras in the obvious way (cf.\ Section \ref{subsec:noncon}).

\begin{thm}
\label{thm:relLieGcr}
Let $H$ be a reductive subgroup of $G$.
\begin{enumerate}[{\rm (i)}]
\item Let $n\in {\mathbb N}$,
let $\tuple{k}\in \gg^n$ and
let $\lambda\in Y(H)$ such that
$\tuple{m}:=\lim_{a\ra 0}\lambda(a)\cdot \tuple{k}$ exists.
Then the following are equivalent:
\begin{enumerate}[{\rm (a)}]
\item $\tuple{m}$ is $H$-conjugate to $\tuple{k}$;
\item $\tuple{m}$ is $R_u(P_\lambda(H))$-conjugate to $\tuple{k}$;
\item $\dim H\cdot\tuple{m}=\dim H\cdot\tuple{k}$.
\end{enumerate}
\item Let $\kk$ be a Lie subalgebra of $\gg$ and let $\lambda\in Y(H)$.
Suppose $\kk\subseteq \pp_\lambda$ and set $\mm= c_\lambda(\kk)$.
Then $\dim C_H(\mm)\geq \dim C_H(\kk)$ and the following are equivalent:
\begin{enumerate}[{\rm (a)}]
\item $\mm$ is $H$-conjugate to $\kk$;
\item  $\mm$ is $R_u(P_\lambda(H))$-conjugate to $\kk$;
\item $\kk \subseteq \frakl_\mu$ for some $\mu\in Y(H)$
such that $P_\lambda = P_\mu$;
\item $\dim C_H(\mm)=\dim C_H(\kk)$.
\end{enumerate}
\item Let $\kk$, $\lambda$ and $\mm$ be as in (ii),
and suppose $\tuple{k}\in \kk^n$ is a generating tuple for $\kk$ for some $n\in {\mathbb N}$.
Then the assertions in (i)
are equivalent to those in (ii).
    In particular, $\kk$ is relatively $G$-completely reducible
with respect to $H$ if and only if $H\cdot\tuple{k}$ is closed in $\gg^n$.
\end{enumerate}
\end{thm}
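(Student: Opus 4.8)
The plan is to mirror the proof of Theorem~\ref{thm:orbitcrit} line for line, working throughout with the $H$-variety $\gg^n$ under the diagonal adjoint action in place of $G^n$, and replacing the group-theoretic limit dictionary of Section~\ref{subsec:noncon} by its Lie algebra counterpart, Lemma~\ref{lem:liealgebrasofRpars}. The two geometric inputs transfer unchanged, since Theorem~\ref{thm:Ruconj} and Lemma~\ref{lem:Ruconj} are stated for an arbitrary $H$-variety; I would simply apply both to $\gg^n$. The only substantive difference from the group case is that an ordinary \emph{generating} tuple $\tuple{k}\in\kk^n$ now plays the role of a generic tuple. The reason is that $\Ad(H)$ acts on $\gg$ by Lie algebra automorphisms and $c_\lambda\colon\pp_\lambda\ra\frakl_\lambda$ is a surjective homomorphism of Lie algebras, so the three properties of Lemma~\ref{lem:generictuple} hold automatically for any generating tuple: (a) $C_H(\tuple{k})=C_H(\kk)$; (b) $\kk\subseteq\pp_\lambda$ if and only if every entry of $\tuple{k}$ lies in $\pp_\lambda$; and (c) $\tuple{m}:=c_\lambda(\tuple{k})$ generates $\mm=c_\lambda(\kk)$. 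I would record these first, each being immediate because a homomorphism (resp.\ automorphism) sends the subalgebra generated by a set to the subalgebra generated by the image.

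For part~(i) I would run a four-term cycle. The implication (b)$\Rightarrow$(a) is trivial since $R_u(P_\lambda(H))\subseteq H$; (a)$\Rightarrow$(b) is exactly Theorem~\ref{thm:Ruconj} applied to $\gg^n$; (a)$\Rightarrow$(c) holds because $H$-conjugate points have equidimensional orbits; and for (c)$\Rightarrow$(a) I use that $\tuple{m}=\lim_{a\ra 0}\lambda(a)\cdot\tuple{k}$ lies in $\ovl{H\cdot\tuple{k}}$, so $H\cdot\tuple{m}\subseteq\ovl{H\cdot\tuple{k}}$; since an orbit is open in its closure with boundary a union of orbits of strictly smaller dimension, equality of dimensions forces $\tuple{m}\in H\cdot\tuple{k}$. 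This closes the cycle and settles~(i).

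For part~(ii) I would fix a generating tuple $\tuple{k}\in\kk^n$, so that $\tuple{m}=c_\lambda(\tuple{k})=\lim_{a\ra0}\lambda(a)\cdot\tuple{k}$ generates $\mm$ and $C_H(\tuple{k})=C_H(\kk)$, $C_H(\tuple{m})=C_H(\mm)$. The inequality $\dim C_H(\mm)\ge\dim C_H(\kk)$ then follows from $H\cdot\tuple{m}\subseteq\ovl{H\cdot\tuple{k}}$ together with the orbit-stabilizer identity $\dim H\cdot\tuple{k}=\dim H-\dim C_H(\kk)$ and its analogue for $\tuple{m}$. For the equivalences, (b)$\Rightarrow$(a) and (a)$\Rightarrow$(d) are clear; for (d)$\Rightarrow$(b) I translate the dimension equality into $\dim H\cdot\tuple{m}=\dim H\cdot\tuple{k}$, invoke~(i) to produce $u\in R_u(P_\lambda(H))$ with $u\cdot\tuple{m}=\tuple{k}$, and conclude $\Ad(u)\mm=\kk$ because $\Ad(u)$ is an automorphism carrying the generators $\tuple{m}$ to $\tuple{k}$. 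Finally (b)$\Leftrightarrow$(c) rests on the identity $c_{u\cdot\lambda}=\Ad(u)\circ c_\lambda$ on $\pp_\lambda$ for $u\in R_u(P_\lambda(H))$, obtained by differentiating the corresponding group computation; since $\mm\subseteq\frakl_\lambda$, this gives $c_{u\cdot\lambda}(\kk)=\Ad(u)\mm$ and hence $\kk\subseteq\frakl_{u\cdot\lambda}$ if and only if $\Ad(u)\mm=\kk$. By Lemma~\ref{lem:relativelystrong}, the R-Levi subgroups $L_\mu$ of $P_\lambda$ with $\mu\in Y(H)$ are precisely the $L_{u\cdot\lambda}$ with $u\in R_u(P_\lambda(H))$, so the subalgebras $\frakl_\mu$ in (c) are exactly the $\frakl_{u\cdot\lambda}$, matching (c) with (b).

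Part~(iii) is then bookkeeping: with $\tuple{k}$ generating $\kk$ and $\tuple{m}=c_\lambda(\tuple{k})$ generating $\mm$, the orbit-stabilizer identity gives (i)(c)$\Leftrightarrow$(ii)(d), and as (i)(a),(b),(c) are mutually equivalent and (ii)(a) through (d) are mutually equivalent, all the assertions coincide. For the last sentence I would combine this with the Hilbert--Mumford Theorem: $\kk$ is relatively $G$-cr with respect to $H$ means precisely that for every $\lambda\in Y(H)$ with $\kk\subseteq\pp_\lambda$ condition (ii)(c) holds, equivalently (i)(a) holds, i.e.\ every limit $\lim_{a\ra0}\lambda(a)\cdot\tuple{k}$ that exists is $H$-conjugate to $\tuple{k}$; by Hilbert--Mumford this is exactly the condition that $H\cdot\tuple{k}$ be closed in $\gg^n$. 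The one point requiring care---and the main obstacle---is establishing the three generating-tuple properties at the outset, since everything downstream is the same formal argument as in Theorem~\ref{thm:orbitcrit}; the genuine content is checking that a Lie-algebra generating tuple is robust enough to replace a generic tuple, which is precisely where the characteristic-free theory of generic tuples simplifies in the Lie algebra setting.
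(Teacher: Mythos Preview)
Your proposal is correct and follows essentially the same approach as the paper, which simply states that the theorem ``can be shown with the same arguments as Theorem~\ref{thm:orbitcrit}''. Your write-up spells out the details more fully than the paper does, and you correctly identify the one genuine simplification in the Lie algebra setting: an ordinary generating tuple suffices in place of a generic tuple, precisely because $\Ad(H)$ acts by Lie algebra automorphisms and $c_\lambda$ is a Lie algebra homomorphism, so the three properties of Lemma~\ref{lem:generictuple} come for free.
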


\begin{rem}
\label{rems:relLieGcr}
For $H = G$, Definition \ref{def:relLieGcr}
is due to G.\ McNinch, \cite{mcninch}.
The final statement of Theorem \ref{thm:relLieGcr}(iii)
generalizes \cite[Thm.\ 1(1)]{mcninch}
which is the case $H = G$.
Note also that Theorem \ref{thm:relLieGcr} generalizes \cite[Thm.\ 5.26]{GIT}
which is the case $H = G$.
\end{rem}

\subsection{Relative $G$-complete reducibility for associative subalgebras
of $\End(V)$}

Let $G=\GL(V)$ and let $H$ be any reductive subgroup of $G$.
Using the characterization of parabolic and Levi
subgroups of $G$ in terms of flags in $V$,
we see that $\pp_\lambda$ and $\frakl_\lambda$, for
$\lambda\in Y(H)$, are associative subalgebras of $\End(V)$. This
means that one can define the notion of
\emph{relative $G$-complete reducibility with
respect to $H$} for associative subalgebras of $\End(V)$ in the obvious way.
Observe that in the absolute case, i.e., when $H = \GL(V)$, we obtain that
an associative subalgebra $A$ of $\End(V)$ is $G$-completely reducible
precisely when $V$ is a semisimple $A$-module.

Obviously, a subgroup of $G$ or a Lie subalgebra of $\gg=\gl(V)$ is
relatively $G$-cr with respect to $H$ if and only the same
holds for the associative subalgebra of $\End(V)$ that it
generates.
If $K$ is a subgroup of $G$ which is topologically
generated by a tuple ${\bf k}\in K^n$, then ${\bf k}$ generates the
associative subalgebra of $\End(V)$ generated by $K$
(cf.\ Remark \ref{rem:genericgenerators}).
Similarly, if $\kk$ is a Lie subalgebra of $\gg$ which is generated by
a tuple ${\bf k}\in\kk^n$, then ${\bf k}$ generates the associative
subalgebra of $\End(V)$ generated by $\kk$.

If a tuple ${\bf a}\in\End(V)^n$ generates the
associative subalgebra $A$ of $\End(V)$, then the analogue
of Theorem~\ref{thm:orbitcrit} holds; in particular, $A$ is
relatively $G$-cr with respect to $H$ if and only if $H\cdot{\bf a}$
is closed.
Therefore, in the absolute case $H = G = \GL(V)$, the final statement of
the analogue of Theorem~\ref{thm:orbitcrit}(iii)
in this setting recovers a fundamental result
in the representation theory of associative algebras
due to H.~Kraft:
$V$ is a semisimple $A$-module if and only if
the $G$-orbit $G \cdot{\bf a}$ is closed in $\End(V)^n$,
see \cite[Prop.\ 4.4]{kraft1} or \cite[II.2.7 Satz 2]{kraft}.
In that sense, this concept of
relative $\GL(V)$-complete reducibility with respect to $H$ for
associative subalgebras of $\End(V)$
generalizes this work of Kraft.

\subsection{$H$-stable points in $G^n$ and relative $G$-irreducibility with
respect to $H$}

Recall the notion of a \emph{stable point} for the action of a reductive group
$G$ on a variety $V$ \cite[1.4]{rich1}:

\begin{defn}
\label{defn:stability}
Let $Z = \bigcap_{v \in V}C_G(v)$ be the kernel of the action
of $G$ on $V$.
We say that $v \in V$ is a
\emph{stable point} for the action of $G$ or a
\emph{$G$-stable point} provided the orbit $G \cdot v$ is closed in $V$ and
$C_G(v)/Z$ is finite.
\end{defn}

\begin{rem}
\label{rem:stability}
Let $V/\!\!/G$ be the variety corresponding to the $k$-algebra $k[V]^G$ and let
$\pi : V \to V/\!\!/G$ be the morphism corresponding to the inclusion
$k[V]^G \to k[V]$. In general, $\pi$ is not a quotient morphism
in the sense of \cite[\S 6]{Bo} because $\pi$ might have a fibre which is the union of more than one $G$-orbit.
Let $V^s$ denote the set of $G$-stable points in $V$.
Then $V^s$ is a (possibly empty)
$G$-stable open subset of $V$ and $V^s = \pi^{-1}(\pi(V^s))$.
Further, $\pi(V^s)$ is an open subset of $V/\!\!/G$ and is a
geometric quotient of $G$.
For $v \in V^s$, we have $G \cdot v = \pi^{-1}(\pi(v))$
(see \cite[Sec.\ 4]{martin1}).
\end{rem}

In \cite[Prop.\ 16.7]{rich}, Richardson characterizes the $G$-stable points in $G^n$.
We can easily extend this result to the $H$-stable points in $G^n$ for reductive subgroups
$H$ of $G$.
To do this, we first extend the notion of $G$-irreducibility from \cite[\S 3.2]{serre2}
to the relative setting:

\begin{defn}
\label{def:GirrGind}
Let $H$ and $K$ be subgroups of $G$ with $H$ reductive.
We say that
\emph{$K$ is relatively $G$-irreducible ($G$-ir) with respect to $H$}
if whenever $\lambda \in Y(H)$ and $K \subseteq P_\lambda$, we have $P_\lambda = G$.
For $H = G$, this relative notion agrees with that of
$G$-irreducibility, cf.\ \cite[\S 3.2]{serre2}, \cite[\S 2.4]{BMR}.
\end{defn}

\begin{rem}
Obviously, relative $G$-irreducibility with respect to $H$ implies
relative $G$-complete reducibility with respect to $H$.
It is clear that for subgroups $H$, $K$ and $M$ of $G$ with
$H$ and $M$ reductive and $H, K\subseteq M$,
if $K$ is relatively $G$-irreducible with respect to $H$, then
$K$ is relatively $M$-irreducible with respect to $H$,
cf.\ \cite[Cor.\ 2.7]{BMR}.
\end{rem}

The notion of relative $G$-irreducibility is exactly what we need to characterize
$H$-stability in $G^n$.
In view of Remark \ref{rem:genericgenerators},
our next result generalizes \cite[Prop.\ 16.7]{rich}; 
see also \cite[Prop.\ 2.13]{BMR}.  
Since Richardson's proof applies mutatis mutandis,
we do not include it.

\begin{prop}
\label{prop:rich-stable}
Let $H$ be a reductive subgroup of $G$.
Let $K$ be a subgroup of $G$ and let $\tuple k \in K^n$
be a generic tuple of $K$.
Then $K$ is relatively $G$-irreducible with respect to $H$
if and only if
$\tuple k$ is an  $H$-stable point in $G^n$.
\end{prop}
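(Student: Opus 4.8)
The plan is to verify the two requirements of $H$-stability (Definition~\ref{defn:stability}) directly, feeding the orbit criterion of Theorem~\ref{thm:orbitcrit} into each. Throughout I write $Z=\bigcap_{v\in G^n}C_H(v)$ for the kernel of the $H$-action on $G^n$; since $H$ acts by simultaneous conjugation, an element acts trivially precisely when it centralizes every element of $G$, so $Z=C_H(G)=H\cap Z(G)$. In particular $Z$ is central in $H$ and $Z^0\subseteq Z(G)$. By Lemma~\ref{lem:generictuple}(i) we have $C_H(\tuple k)=C_H(K)$, and since $Z\subseteq C_H(\tuple k)$ the finiteness condition ``$C_H(\tuple k)/Z$ finite'' is equivalent to $C_H(K)^0=Z^0$.

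The central observation I would isolate first translates this finiteness condition into the language of cocharacters: for the group $C:=C_H(K)$, the quotient $C/Z$ is finite if and only if every $\lambda\in Y(C)$ is central in $G$. One implication is immediate, since $C/Z$ finite forces $C^0=Z^0$ and hence $Y(C)=Y(Z^0)\subseteq Y(Z(G))$. For the converse I would use that $C$ is reductive (this is where Corollary~\ref{cor:bmr3.12} enters, applicable because relative $G$-irreducibility implies relative $G$-complete reducibility): if $Y(C)\subseteq Y(Z(G))$, then a maximal torus $S$ of $C$ satisfies $S\subseteq Z(G)\cap H=Z$, so $S=Z^0$ is a central maximal torus of the connected reductive group $C^0$, forcing $C^0=S=Z^0$.

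For the forward implication, suppose $K$ is relatively $G$-irreducible with respect to $H$. Then $K$ is relatively $G$-completely reducible with respect to $H$, so $H\cdot\tuple k$ is closed in $G^n$ by Theorem~\ref{thm:orbitcrit}(iii); this is the first half of stability. For the second half I must show that every $\lambda\in Y(C_H(K))$ is central in $G$. Any such $\lambda$ lies in $Y(H)$ and centralizes $K$, so $K\subseteq C_G(\lambda(k^*))=L_\lambda\subseteq P_\lambda$; relative $G$-irreducibility then forces $P_\lambda=G$, i.e.\ $\lambda$ is central in $G$ by \cite[Lem.\ 2.4]{BMR}. The observation of the previous paragraph now gives that $C_H(\tuple k)/Z$ is finite, so $\tuple k$ is $H$-stable.

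For the converse, suppose $\tuple k$ is $H$-stable and let $\lambda\in Y(H)$ with $K\subseteq P_\lambda$. Then $\tuple k\in P_\lambda^n$, so $\tuple m:=\lim_{a\to 0}\lambda(a)\cdot\tuple k$ exists and lies in $\overline{H\cdot\tuple k}=H\cdot\tuple k$ by closedness. By Theorem~\ref{thm:orbitcrit}(i) there is $u\in R_u(P_\lambda(H))$ with $\tuple m=u\cdot\tuple k$, and then Lemma~\ref{lem:Ruconj} shows that $u^{-1}\cdot\lambda$ centralizes $\tuple k$, i.e.\ $u^{-1}\cdot\lambda\in Y(C_H(\tuple k))$. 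Stability gives $C_H(\tuple k)^0=Z^0\subseteq Z(G)$, so $u^{-1}\cdot\lambda$ is central in $G$ and $P_{u^{-1}\cdot\lambda}=G$. Finally $P_{u^{-1}\cdot\lambda}=P_\lambda$ because $u^{-1}\in R_u(P_\lambda)\subseteq P_\lambda$, whence $P_\lambda=G$, and $K$ is relatively $G$-irreducible with respect to $H$. I expect the main obstacle to be the bookkeeping around the kernel $Z$ and the reductivity input of Corollary~\ref{cor:bmr3.12} needed to pass from centrality of all cocharacters of $C_H(K)$ to finiteness of $C_H(K)/Z$; the orbit-theoretic content is otherwise supplied directly by Theorem~\ref{thm:orbitcrit}.
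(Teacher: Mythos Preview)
Your proof is correct and follows essentially the same route as the paper's (suppressed) argument, which is Richardson's proof \cite[Prop.~16.7]{rich} adapted to the relative setting. The forward direction is identical in substance: relative $G$-irreducibility gives relative $G$-complete reducibility, hence the orbit is closed; then reductivity of $C_H(K)$ (you cite Corollary~\ref{cor:bmr3.12}, the paper invokes \cite[Lem.~10.1.3]{rich1} directly, which is the same content) forces a maximal torus of $C_H(K)$ to lie in $Z(G)\cap H$.

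For the converse you take a somewhat different path. The paper argues by contrapositive: if $K\subseteq P_\lambda\neq G$ for some $\lambda\in Y(H)$, then the limit $\tuple m=c_\lambda(\tuple k)$ is centralized by $\lambda$, so $\dim C_H(\tuple m)>\dim(Z(G)\cap H)$, whence $\tuple m$ is not stable; one then splits into the cases $\tuple m\in H\cdot\tuple k$ and $\tuple m\notin H\cdot\tuple k$. You instead argue directly: closedness of $H\cdot\tuple k$ and Theorem~\ref{thm:orbitcrit}(i) give $u\in R_u(P_\lambda(H))$ with $u^{-1}\cdot\lambda\in Y(C_H(\tuple k))=Y(Z^0)\subseteq Y(Z(G))$, so $P_\lambda=P_{u^{-1}\cdot\lambda}=G$. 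This avoids the case split and is a bit cleaner; the trade-off is that you lean on the full equivalence (a)$\Leftrightarrow$(b) of Theorem~\ref{thm:orbitcrit}(i), whereas the paper's argument only needs dimension information about the limit.
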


The following provides analogues of Corollaries~3.22 and 3.5 of \cite{BMR}.

\begin{prop}
\label{prop:relGcrvsGir}
Let $H$ and $K$ be subgroups of $G$ and suppose that $H$ is reductive.
\begin{enumerate}[{\rm(i)}]
\item Let $S$ be a torus of $C_H(K)$ and set $L = C_G(S)$.
Then $K$ is relatively $G$-completely reducible with respect to $H$
if and only if $K$ is relatively $L$-completely reducible with
respect to $H \cap L$.
\item The R-Levi subgroups $L_\mu$ of $G$ for $\mu \in Y(H)$
that are minimal with respect to containing
$K$ are precisely the subgroups of the form $C_G(S)$ where $S$ is a maximal torus
of $C_H(K)$.
If $L$ is such an R-Levi subgroup of $G$, then $K$ is relatively
$G$-completely reducible with respect to $H$
if and only if $K$ is relatively $L$-irreducible with respect to $H\cap L$.
\end{enumerate}
\end{prop}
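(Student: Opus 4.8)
The plan is to reduce both statements to the orbit criterion of Theorem~\ref{thm:orbitcrit}(iii) together with Richardson's theorem on fixed points of tori. Throughout I fix a generic tuple $\tuple{k}\in K^n$ of $K$ for some embedding $G\hookrightarrow\GL_m$; by restriction it is simultaneously a generic tuple of $K$ for $L\hookrightarrow\GL_m$, exactly as in the proof of Corollary~\ref{cor:GvsM}. I also use freely that $L=C_G(S)$ is reductive and that $H\cap L=C_H(S)$ is a reductive subgroup of both $G$ and $L$.

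For part (i): since $S\subseteq C_H(K)$ centralizes $K$ and $\tuple{k}\in K^n$, the tuple $\tuple{k}$ is a fixed point of $S$ for the $H$-action on $G^n$. Applying Richardson's theorem \cite[Thm.~A]{rich1} to the reductive group $H$ acting on $G^n$ with the $S$-fixed point $\tuple{k}$ gives that $H\cdot\tuple{k}$ is closed in $G^n$ if and only if $C_H(S)\cdot\tuple{k}=(H\cap L)\cdot\tuple{k}$ is closed in $G^n$; and since $L$ is closed in $G$, the latter orbit is closed in $G^n$ if and only if it is closed in $L^n$. Feeding these equivalences into Theorem~\ref{thm:orbitcrit}(iii), applied once to $H$ acting on $G^n$ and once to $H\cap L$ acting on $L^n$, yields the chain: $K$ is relatively $G$-cr with respect to $H$ $\iff$ $H\cdot\tuple{k}$ is closed in $G^n$ $\iff$ $(H\cap L)\cdot\tuple{k}$ is closed in $L^n$ $\iff$ $K$ is relatively $L$-cr with respect to $H\cap L$.

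For the first assertion of part (ii), note that for $\mu\in Y(H)$ the condition $K\subseteq L_\mu=C_G(\mu(k^*))$ is equivalent to $\mu(k^*)\subseteq C_H(K)$. Given any such $\mu$, enlarge $\mu(k^*)$ to a maximal torus $S$ of $C_H(K)$; then $C_G(S)\subseteq L_\mu$, and choosing $\mu'\in Y(S)\subseteq Y(H)$ with dense image we get $C_G(S)=L_{\mu'}\supseteq K$. Thus every member of the family contains one of the subgroups $C_G(S)$ with $S$ maximal in $C_H(K)$, so the minimal members lie among these. Conversely, if $L_\nu\subseteq C_G(S)$ with $\nu\in Y(H)$ and $K\subseteq L_\nu$, then $\nu(k^*)\subseteq C_H(K)$ and $\nu(k^*)$ centralizes $S$; since $\langle S,\nu(k^*)\rangle^0$ is then a torus in $C_H(K)$ containing the maximal torus $S$, we get $\nu(k^*)\subseteq S$, whence $C_G(S)\subseteq L_\nu$ and $L_\nu=C_G(S)$. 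So each such $C_G(S)$ is minimal.

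For the second assertion of part (ii), the implication relatively $L$-ir $\Rightarrow$ relatively $L$-cr (with respect to $H\cap L$) is immediate, and by part (i) the latter is equivalent to relative $G$-complete reducibility with respect to $H$. For the converse, suppose $K$ is relatively $L$-cr with respect to $H\cap L$ but not relatively $L$-ir: there is $\mu\in Y(H\cap L)$ with $K\subseteq P_\mu(L)\neq L$, and hence some $\mu'\in Y(H\cap L)$ with $P_{\mu'}(L)=P_\mu(L)$ and $K\subseteq L_{\mu'}(L)=C_L(\mu'(k^*))$. Then $\mu'(k^*)$ centralizes both $K$ and $S$, so $\langle S,\mu'(k^*)\rangle^0$ is a torus in $C_H(K)$ containing $S$; maximality of $S$ forces $\mu'(k^*)\subseteq S\subseteq Z(L)$. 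But then $\mu'$ is central in $L$, so $P_{\mu'}(L)=L$ by the $L$-analogue of \cite[Lem.~2.4]{BMR}, contradicting $P_\mu(L)\neq L$. Hence $K$ is relatively $L$-ir with respect to $H\cap L$, as required. The \emph{main obstacle} is the passage from $H$ to $H\cap L$ in part (i): this is precisely where Richardson's theorem on closed orbits and torus-centralizers \cite[Thm.~A]{rich1} is indispensable, as there is no purely group-theoretic substitute in the relative setting; everything else is bookkeeping with tori inside $C_H(K)$ and the identity $L_\mu=C_G(\mu(k^*))$.
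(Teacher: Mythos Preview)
Your proof is correct and, for part~(i) and the first assertion of~(ii), follows the paper's argument essentially verbatim: generic tuple plus Richardson's closed-orbit criterion for torus-centralizers (the paper cites \cite[Thm.~C]{rich1} rather than Theorem~A, but this is a citation detail, not a mathematical one). One small slip: your phrase ``$\mu'\in Y(S)$ with dense image'' only makes sense when $S$ is one-dimensional; for higher-rank $S$ you want instead the standard fact (cf.\ \cite[Cor.~6.10]{BMR}) that some $\mu'\in Y(S)$ satisfies $C_G(S)=L_{\mu'}$.

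For the second assertion of~(ii) you take a genuinely different route from the paper. The paper invokes \cite[Lem.~16.6]{rich}, which says directly that $H\cdot\tuple{k}$ is closed if and only if $\tuple{k}$ is a \emph{stable} point for the $(H\cap L)$-action on $L^n$, and then translates stability into relative $L$-irreducibility via Proposition~\ref{prop:rich-stable}. You instead reduce via part~(i) to relative $L$-complete reducibility with respect to $H\cap L$ and then argue group-theoretically that, because $S$ is a \emph{maximal} torus of $C_H(K)$, any $\mu'\in Y(H\cap L)$ with $K\subseteq L_{\mu'}(L)$ is forced to land in $S\subseteq Z(L)$, collapsing $P_{\mu'}(L)$ to $L$. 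Your argument is more self-contained (it avoids the separate stability lemma), while the paper's has the conceptual payoff of identifying relative irreducibility with geometric stability along the way.
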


\begin{proof}
First we prove the first assertion of (ii).
If $S$ is a torus in $G$, then $C_G(S)=L_\lambda$
for some $\lambda\in Y(S)$ by the arguments of the
proof of \cite[Cor.~6.10]{BMR}. Now assume that $S$
is a maximal torus of $C_H(K)$. If $\mu\in Y(H)$ such
that $K\subseteq L_\mu\subseteq C_G(S)$, then $\mu(k^*)$
commutes with $S$ and is contained in $C_H(K)$. So, by
the maximality of $S$, $\mu(k^*)\subseteq S$ and $L_\mu=C_G(S)$.

Now assume that $\lambda\in Y(H)$ and $L_\lambda$ is minimal among the R-Levi
subgroups $L_\mu$ of $G$ with $\mu \in Y(H)$ and $K\subseteq L_\mu$.
Put $S=(Z(L_\lambda)\cap H)^0$. Then one easily checks that
$L_\lambda=C_G(S)$. Let $T$ be a torus of $C_H(K)$
with $S\subseteq T$. Then $K\subseteq C_G(T)\subseteq C_G(S)=L_\lambda$.
As we have seen above, $C_G(T)=L_\mu$ for some $\mu\in Y(T)\subseteq Y(H)$.
So, by the minimality of $L_\lambda$, $C_G(T)=L_\lambda$ and $T\subseteq S$.
So $S$ is a maximal torus of $C_H(K)$.

To prove (i) and the second assertion of (ii) let $\tuple k$ be a
generic tuple of $K$.
Let $S$ be a torus of $C_H(K)$ and let $L = C_G(S)$.  Note that $H\cap L= C_H(S)$ is reductive.
By Theorem \ref{thm:orbitcrit}(iii),
$K$ is relatively $G$-completely reducible with respect to $H$
if and only if
$H\cdot \tuple k$ is closed in $G^n$ and likewise if we replace
$G$ and $H$ by $L$ and $H \cap L$, respectively.
So (i) now follows from \cite[Thm.~C]{rich1}.

Now assume that $S$ is a maximal torus of $C_H(K)$.
Then $H\cdot \tuple k$ is closed in $G^n$
if and only if
$\tuple k$ is a stable point for the $(H\cap L)$-action
on $L^n$ by \cite[Lem.~16.6]{rich}.
By Proposition \ref{prop:rich-stable}, this is equivalent to
$K$ being relatively $L$-irreducible with respect to $H\cap L$.
\end{proof}

\begin{lem}
\label{lem:paroflevi}
Let $H$ be a reductive subgroup of $G$ and let $\lambda\in Y(H)$.
Then the R-parabolic subgroups $P_\mu$  of $G$  contained in $P_\lambda$
with $\mu \in Y(H)$ are precisely the subgroups of the form
$Q \ltimes R_u(P_\lambda)$, where $Q = P_\nu(L_\lambda)$
and $\nu \in Y(L_\lambda(H))$.
\end{lem}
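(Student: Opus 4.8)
The plan is to prove the two inclusions of the asserted equality separately, using throughout the projection $c_\lambda\colon P_\lambda\to L_\lambda$ from Definition \ref{defn:rpars} and the fact that $R_u(P_\lambda)$ acts simply transitively on the R-Levi subgroups of $P_\lambda$. The one piece of standard structure theory I will invoke freely is that if $P\subseteq Q$ are R-parabolic subgroups of $G$ then $R_u(Q)\subseteq R_u(P)$; reducing to identity components and the connected case (as in \cite{BMR}) this gives, in particular, $R_u(P_\lambda)\subseteq R_u(P_\mu)\subseteq P_\mu$ whenever $P_\mu\subseteq P_\lambda$. Recall also that $c_\lambda$ is a homomorphism commuting with limits, that $\ker c_\lambda=R_u(P_\lambda)$, that $c_\lambda$ restricts to the identity on $L_\lambda$, and that its restriction to $P_\lambda(H)=P_\lambda\cap H$ is the analogous projection onto $L_\lambda(H)$.

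For the inclusion ``$\supseteq$'', let $\nu\in Y(L_\lambda(H))$ and put $Q=P_\nu(L_\lambda)$. Since $\nu(k^*)\subseteq L_\lambda=C_G(\lambda(k^*))$, the cocharacters $\lambda$ and $\nu$ commute, so $\mu:=\lambda^N\nu$ (defined by $\mu(a)=\lambda(a)^N\nu(a)$) lies in $Y(H)$ for every $N$. I would fix an embedding $G\hookrightarrow\GL(W)$ and decompose $W=\bigoplus_{i,j}W_{i,j}$ into the simultaneous weight spaces of the commuting pair $(\lambda,\nu)$, with $i,j$ the $\lambda$- and $\nu$-weights. Conjugation by $\mu(a)$ scales the $(W_{i,j},W_{i',j'})$-block of an element of $G$ by $a^{N(i-i')+(j-j')}$, and for $N$ exceeding all the finitely many $|j-j'|$ the exponent is $\ge0$ exactly when $i>i'$, or $i=i'$ and $j\ge j'$. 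This is precisely the condition defining $c_\lambda^{-1}(Q)=Q\ltimes R_u(P_\lambda)$, so $P_\mu=Q\ltimes R_u(P_\lambda)\subseteq P_\lambda$; this realizes every subgroup of the stated form as a $P_\mu$ with $\mu\in Y(H)$ and $P_\mu\subseteq P_\lambda$.

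For the inclusion ``$\subseteq$'', let $\mu\in Y(H)$ with $P_\mu\subseteq P_\lambda$. The hard part is that $\mu$ need not commute with $\lambda$, so there is no simultaneous weight decomposition and the computation above is unavailable; I would get around this by passing to the Levi part $\nu:=c_\lambda\circ\mu$. Since $\mu(k^*)\subseteq L_\mu\subseteq P_\mu\subseteq P_\lambda$ and $\mu\in Y(H)$, we have $\mu(k^*)\subseteq P_\lambda(H)$, whence $\nu\in Y(L_\lambda(H))$. Because $\mu(k^*)$ lies in some R-Levi subgroup of $P_\lambda$ and $R_u(P_\lambda)$ acts simply transitively on these (Definition \ref{defn:rpars}), there is $u\in R_u(P_\lambda)$ with $u\cdot\mu\in Y(L_\lambda)$; applying $c_\lambda$ and using $c_\lambda(u)=1$ together with $c_\lambda|_{L_\lambda}=\mathrm{id}$ identifies $u\cdot\mu=\nu$, so that $P_\mu=u^{-1}P_\nu u$ and $P_\nu=uP_\mu u^{-1}\subseteq uP_\lambda u^{-1}=P_\lambda$.

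It then remains to show $P_\mu=Q\ltimes R_u(P_\lambda)$ with $Q=P_\nu(L_\lambda)$. By the standard fact, $R_u(P_\lambda)\subseteq P_\mu$, so $P_\mu$ is a union of $\ker c_\lambda$-cosets, giving $P_\mu=c_\lambda^{-1}(c_\lambda(P_\mu))$ with $c_\lambda(P_\mu)=P_\mu\cap L_\lambda$. Since $c_\lambda$ kills conjugation by $u\in\ker c_\lambda$, I would compute $c_\lambda(P_\mu)=c_\lambda(P_\nu)=P_\nu\cap L_\lambda=P_\nu(L_\lambda)=Q$, where the middle equality again uses $R_u(P_\lambda)\subseteq P_\nu$ (valid since $P_\nu\subseteq P_\lambda$). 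Feeding this back yields $P_\mu=c_\lambda^{-1}(Q)=Q\ltimes R_u(P_\lambda)$, completing the proof. I expect the main obstacle to be exactly this non-commutation of $\mu$ and $\lambda$ in the second inclusion: the whole argument hinges on replacing $\mu$ by $\nu=c_\lambda\circ\mu$ through the $R_u(P_\lambda)$-conjugacy, while simultaneously checking that $\nu$ remains in $Y(L_\lambda(H))$ so that the resulting subgroup has the required form.
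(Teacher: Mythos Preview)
Your argument is correct and follows essentially the same route as the paper's proof: both reduce the ``$\subseteq$'' direction to the case where the cocharacter lies in $Y(L_\lambda(H))$ (the paper by conjugating $\mu$ by an element of $R_u(P_\lambda(H))$, you by passing to $\nu=c_\lambda\circ\mu$ and observing this is the same conjugation up to an element of $R_u(P_\lambda)$), and both obtain the ``$\supseteq$'' direction via $\mu=N\lambda+\nu$ for large $N$. The paper defers the remaining computation to \cite[Lem.~6.2(ii)]{BMR}, whereas you spell it out via weight decompositions and the identity $c_\lambda(P_\mu)=P_\nu(L_\lambda)$; these are the same argument in different dress.
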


\begin{proof}
If $\mu\in Y(H)$ such that $P_\mu\subseteq P_\lambda$,
then $P_\mu$ is of the stated form if some $R_u(P_\lambda(H))$-conjugate of $P_\mu$
is of this form. Now we can replace $\mu$ with an $R_u(P_\lambda(H))$-conjugate
which lies in $Y(L_\lambda(H))$. The rest of the proof is completely analogous
to that of \cite[Lem.\ 6.2(ii)]{BMR}.
\end{proof}

We can now generalize \cite[Prop.\ 5.14]{GIT},
showing that we can associate to each $H$-conjugacy
class of subgroups of $G$ a unique
$H$-conjugacy class of subgroups which are relatively
$G$-cr with respect to $H$; Theorem~\ref{thm:uniquegcr}
below is the
group-theoretic analogue of the statement that the closure of
each $H$-orbit in $G^n$ contains a unique closed $H$-orbit.

\begin{thm}
\label{thm:uniquegcr}
Let $H$ and $K$ be subgroups of $G$ and suppose that $H$ is reductive.
\begin{enumerate}[{\rm(i)}]
\item There exists $\lambda\in Y(H)$ and a subgroup $M$
of $G$ which is relatively $G$-completely reducible
with respect to $H$ such that $K\subseteq P_\lambda$ and $c_\lambda(K)=M$.
Moreover, $M$ is unique up to $H$-conjugacy and its $H$-conjugacy class
depends only on the $H$-conjugacy class of $K$.
\item Any automorphism of the algebraic group $G$ that normalizes $H$
and stabilizes the $H$-conjugacy class of $K$, stabilizes the $H$-conjugacy
class of $M$.
\item If $\mu \in Y(H)$ and $K\subseteq P_\mu$,
then the procedure described in (i) associates the same $H$-conjugacy class of subgroups to $K$ and $c_\mu(K)$.
\end{enumerate}
\end{thm}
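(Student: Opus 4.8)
We must prove Theorem~\ref{thm:uniquegcr}, which has three parts:

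(i) There exists $\lambda\in Y(H)$ and a subgroup $M$ of $G$ which is relatively $G$-cr with respect to $H$ such that $K\subseteq P_\lambda$ and $c_\lambda(K)=M$. Moreover $M$ is unique up to $H$-conjugacy and its $H$-conjugacy class depends only on the $H$-conjugacy class of $K$.

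(ii) Any automorphism of $G$ normalizing $H$ and stabilizing the $H$-conjugacy class of $K$ stabilizes the $H$-conjugacy class of $M$.

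(iii) If $\mu\in Y(H)$ and $K\subseteq P_\mu$, then the procedure in (i) associates the same $H$-conjugacy class to $K$ and to $c_\mu(K)$.

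Let me plan the proof.\textbf{Approach.} The plan is to transport everything to the orbit picture of Theorem~\ref{thm:orbitcrit}(iii) via a generic tuple, so that the existence-and-uniqueness statement becomes the familiar fact that the closure of each $H$-orbit in $G^n$ contains a unique closed $H$-orbit. Fix a generic tuple $\tuple{k}\in K^n$ of $K$ for some embedding $G\hookrightarrow\GL_m$. By the Hilbert--Mumford Theorem there is a $\lambda\in Y(H)$ such that $\tuple{m}:=\lim_{a\ra 0}\lambda(a)\cdot\tuple{k}$ exists and lies in the unique closed $H$-orbit contained in $\overline{H\cdot\tuple{k}}$. That $\lim$ exists means $K'\subseteq P_\lambda$, where $K'$ is the algebraic subgroup generated by $\tuple{k}$; by Lemma~\ref{lem:generictuple}(ii) this gives $K\subseteq P_\lambda$ as well, and by Lemma~\ref{lem:generictuple}(iii), $\tuple{m}=c_\lambda(\tuple{k})$ is a generic tuple of $M:=c_\lambda(K)$. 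Since $H\cdot\tuple{m}$ is closed, Theorem~\ref{thm:orbitcrit}(iii) tells us precisely that $M$ is relatively $G$-cr with respect to $H$. This establishes existence in (i).

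\textbf{Uniqueness.} For the uniqueness clause, suppose $M_1=c_{\lambda_1}(K)$ and $M_2=c_{\lambda_2}(K)$ are both relatively $G$-cr with respect to $H$, arising from $\lambda_1,\lambda_2\in Y(H)$ with $K\subseteq P_{\lambda_i}$. Then $c_{\lambda_i}(\tuple{k})$ is a generic tuple of $M_i$ lying in $\overline{H\cdot\tuple{k}}$, and by Theorem~\ref{thm:orbitcrit}(iii) the orbit $H\cdot c_{\lambda_i}(\tuple{k})$ is closed. Two closed $H$-orbits inside the same orbit closure $\overline{H\cdot\tuple{k}}$ must coincide, so $c_{\lambda_1}(\tuple{k})$ and $c_{\lambda_2}(\tuple{k})$ are $H$-conjugate, say by $h\in H$. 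Since these are generic tuples of $M_1$ and $M_2$ respectively, $h$ conjugates the subgroup generated by the first tuple to that generated by the second; by Lemma~\ref{lem:generictuple}(ii) (the generated subgroup lies in exactly the same R-parabolic and R-Levi subgroups as $M_i$) one deduces $hM_1h^{-1}=M_2$, giving uniqueness up to $H$-conjugacy. Dependence only on the $H$-conjugacy class of $K$ follows because replacing $K$ by $hKh^{-1}$ ($h\in H$) replaces $\tuple{k}$ by the generic tuple $h\cdot\tuple{k}$ of $hKh^{-1}$, which has the same orbit closure $\overline{H\cdot(h\cdot\tuple{k})}=\overline{H\cdot\tuple{k}}$ and hence the same closed orbit.

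\textbf{Parts (ii) and (iii).} For (ii), let $\theta\in\Aut(G)$ normalize $H$ and stabilize the $H$-conjugacy class of $K$. Then $\theta$ acts on $G^n$ compatibly with the $H$-action (since $\theta(H)=H$), sending the generic tuple $\tuple{k}$ of $K$ to a tuple that is $H$-conjugate to a generic tuple of $K$ again (as $\theta(K)$ is $H$-conjugate to $K$); $\theta$ carries closed orbits to closed orbits and commutes with taking orbit closures. Hence $\theta$ sends the unique closed orbit in $\overline{H\cdot\tuple{k}}$ to itself up to $H$-conjugacy, which by the correspondence above means $\theta$ stabilizes the $H$-conjugacy class of $M$. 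For (iii), if $\mu\in Y(H)$ and $K\subseteq P_\mu$, then $c_\mu(\tuple{k})$ is a generic tuple of $c_\mu(K)$ lying in $\overline{H\cdot\tuple{k}}$ by Lemma~\ref{lem:generictuple}(iii) and the limit interpretation; thus $\overline{H\cdot c_\mu(\tuple{k})}\subseteq\overline{H\cdot\tuple{k}}$, so both orbit closures contain the \emph{same} unique closed orbit, and applying the procedure of (i) to $c_\mu(K)$ yields the same $H$-conjugacy class $M$.

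\textbf{Main obstacle.} The delicate point is the passage from ``the generic tuples of $M_1$ and $M_2$ are $H$-conjugate'' to ``$M_1$ and $M_2$ are $H$-conjugate as subgroups.'' A generic tuple determines its generated subgroup only up to the ambiguity controlled by Lemma~\ref{lem:generictuple}, so one must invoke that lemma carefully to conclude that the conjugating element $h$ simultaneously conjugates the two subgroups, rather than merely the two tuples. Everything else is a routine transcription of the closed-orbit/orbit-closure dictionary into group-theoretic language, exactly paralleling \cite[Prop.~5.14]{GIT} in the absolute case.
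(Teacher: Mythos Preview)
Your existence argument via Hilbert--Mumford on a generic tuple is correct and is a genuinely different route from the paper's: there one picks $\lambda\in Y(H)$ with $P_\lambda$ minimal among R-parabolics of the form $P_\mu$, $\mu\in Y(H)$, containing $K$, shows $c_\lambda(K)$ is relatively $L_\lambda$-irreducible with respect to $L_\lambda(H)$ via Lemma~\ref{lem:paroflevi}, and then concludes relative $G$-complete reducibility from Proposition~\ref{prop:relGcrvsGir}(i). Your geometric approach is more direct for this part.

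However, the uniqueness argument has exactly the gap you flag, and Lemma~\ref{lem:generictuple}(ii) does not close it. From $h\cdot c_{\lambda_1}(\tuple{k})=c_{\lambda_2}(\tuple{k})$ you deduce that $hM_1h^{-1}$ and $M_2$ share a generic tuple, hence span the same associative subalgebra of $\Mat_m$ and lie in the same R-parabolic and R-Levi subgroups; but distinct subgroups can have a common generic tuple (e.g., the rank-one subtorus $\{\diag(t,t^{-1})\}$ and the full diagonal torus in $\GL_2$ both admit $(\diag(t,t^{-1}))$ as a generic $1$-tuple whenever $t\neq\pm1$). What $h$ visibly conjugates is $c_{\lambda_1}(K')$ onto $c_{\lambda_2}(K')$, where $K'$ is the subgroup topologically generated by $\tuple{k}$, and nothing forces $c_{\lambda_i}(K')=c_{\lambda_i}(K)$ when $K'\subsetneq K$. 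The same defect recurs in your argument for (iii). The paper sidesteps this by arguing at the level of subgroups throughout: after $R_u(P_{\lambda_i}(H))$-conjugation one puts $\lambda_1,\lambda_2$ into a common maximal torus of $H$, so that $c_{\lambda_1},c_{\lambda_2}$ commute on $P_{\lambda_1}\cap P_{\lambda_2}$ and $c_{\lambda_1}(M_2)=c_{\lambda_2}(M_1)$; since each $M_i$ is relatively $G$-cr, Theorem~\ref{thm:orbitcrit}(ii)---which is a statement about \emph{subgroups}, not tuples---forces $M_i$ to be $R_u(P_{\lambda_{3-i}}(H))$-conjugate to this common image, yielding $M_1$ $H$-conjugate to $M_2$.
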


\begin{proof}
Let $\lambda \in Y(H)$ be such that $P_\lambda$ is minimal among
the R-parabolic subgroups $P_\mu$ of $G$ with $\mu \in Y(H)$ and
$K \subseteq P_\mu$. Then $P_\lambda$ is also minimal among the
R-parabolic subgroups $P_\mu$ of $G$ with $\mu \in Y(H)$ and
$c_\lambda(K)\subseteq P_\mu$ by the same arguments as in the
proof of \cite[Prop.~5.14]{GIT}. It now follows from
Lemma~\ref{lem:paroflevi} that $c_\lambda(K)$ is relatively
$L_\lambda$-irreducible with respect to $L_\lambda(H)$.
But then $c_\lambda(K)$ is relatively $G$-cr with respect
to $H$ by Proposition~\ref{prop:relGcrvsGir}(i).

The rest of the proof is completely analogous to
that of \cite[Prop.\ 5.14]{GIT}. One has to conjugate
with elements from $R_u(P_\lambda(H))$ rather than
$R_u(P_\lambda)$. The cocharacters $\lambda$ and $\mu$
in the proof of \cite[Prop.\ 5.14]{GIT} can now be put
in a common maximal torus of $P_\lambda(H)$ and $P_\mu(H)$.
\end{proof}

\begin{rem}
\label{rem:uniqueLiegcr}
A statement analogous to Theorem \ref{thm:uniquegcr}
holds for Lie algebras: that is, given any Lie subalgebra $\kk$ of $\gg$,
we can find a uniquely defined $H$-conjugacy class of subalgebras of $\gg$
which contains $c_\lambda(\kk)$ for some  $\lambda \in Y(H)$, each member of which
is relatively $G$-cr with respect to $H$.
\end{rem}

\subsection{Optimal parabolic subgroups}
\label{sec:optpars}

Let $K$ be a subgroup of $G$.
If $K$ is not relatively $G$-cr with respect to $H$, then there exists at least one cocharacter $\lambda \in Y(H)$
such that $K \subseteq P_\lambda$, but $K \not\subseteq L_{u\cdot\lambda}$ for any $u \in R_u(P_\lambda(H))$.
Following work in \cite[Sec.~5]{GIT}, we now show how to make a so-called optimal choice for this $\lambda \in Y(H)$;
being able to make such a choice has many advantages and shortens some of the proofs which follow.
Since the constructions we are going to discuss are very similar to those in \cite[Sec.~5]{GIT}, where the case of
a non-$G$-cr subgroup of $G$ is addressed, we omit some details and content ourselves with
pointing out the necessary modifications to allow the results to go through here.  The constructions below in the special case $H=G$ agree with those of {\em loc.\ cit.}

We first need to adapt some of the notation from \cite[Sec.~4, Sec.~5]{GIT} to the relative setting.
Suppose $K$ is a subgroup of $G$ and suppose $\lambda \in Y(H)$ is such that $K \subseteq P_\lambda$.
Set $M := c_\lambda(K)$ and let $S_n(M) = \overline{H\cdot M^n}$.
Then $K^n$ is a uniformly $S_n(M)$-unstable set for the action
of $H$ on $G^n$ in the sense of \cite[Sec.~4]{GIT}.
Any $G$-invariant norm on $Y(G)$ in the sense of \cite[Def.~4.1]{GIT} restricts
to an $N_G(H)$-invariant norm on $Y(H)$;
let $\left\|\,\right\|$ be such a norm.
Then \cite[Sec.~4]{GIT} provides a set
$\Omega(K^n,S_n(M))$ of cocharacters of $H$, the so-called \emph{optimal class} (note that this depends on $H$, $n$ and $\left\|\,\right\|$ as well as $M$, but we suppress these other dependences in this and subsequent notation; cf.\ \cite[Rem.~5.22]{GIT}).
Similarly, if $\kk$ is a Lie subalgebra of $\gg$ and $\lambda \in Y(H)$ is such that $\kk \subseteq \pp_\lambda$,
then setting $\mm := c_\lambda(\kk)$ and $S_n(\mm) = \overline{H\cdot \mm^n}$, we get an optimal class
$\Omega(\kk^n,S_n(\mm)) \subseteq Y(H)$.

We have the following analogue of \cite[Thm.\ 5.16]{GIT} in the relative setting:

\begin{thm}
\label{thm:optpar}
Let $K$ be a subgroup of $G$ and let $n\in {\mathbb N}$ such that
$K^n$ contains a generic tuple of $K$.
Let $M$ be a subgroup of $G$ and suppose that $M= c_\lambda(K)$ for
some $\lambda\in Y(H)$ with $K\subseteq P_\lambda$.
Put $\Omega(K,M) := \Omega(K^n,S_n(M))$. Then the following hold:
\begin{enumerate}[{\rm (i)}]
\item $P_\mu = P_\nu$ for all $\mu, \nu \in \Omega(K,M)$.
Let $P(K,M)$ denote the unique R-parabolic subgroup of $G$
so defined. Then $K\subseteq P(K,M)$ and $R_u(P(K,M)\cap H)$
acts simply transitively on $\Omega(K,M)$.
\item For $g\in N_G(H)$ we have $\Omega(gKg^{-1},gMg^{-1})=
g\cdot\Omega(K,M)$ and $P(gKg^{-1},gMg^{-1})=  gP(K,M)g^{-1}$.
If $g\in G$ normalizes $H$, $K$ and $S_n(M)$, then $g\in P(K,M)$.
\item If $\mu\in\Omega(K,M)$, then $\dim C_H(c_\mu(K))\ge\dim C_H(M)$.
If $M$ is $H$-conjugate to $K$, then $\Omega(K,M)=\{0\}$ and $P(K,M) = G$.
If $M$ is not $H$-conjugate to $K$, then $K$ is not contained in $L_\mu$
for any $\mu \in Y(H)$ with $P_\mu = P(K,M)$.
\end{enumerate}
\end{thm}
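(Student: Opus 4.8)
The plan is to follow the proof of \cite[Thm.~5.16]{GIT}, applying the uniform instability formalism of \cite[Sec.~4]{GIT} to the reductive group $H$ acting on $V=G^n$, equipped with the $N_G(H)$-invariant norm on $Y(H)$ obtained by restricting a $G$-invariant norm on $Y(G)$, and with $X=K^n$ and $S=S_n(M)=\overline{H\cdot M^n}$; by hypothesis we may fix a generic tuple $\tuple{k}\in K^n$ of $K$. First I would confirm that $K^n$ is uniformly $S_n(M)$-unstable: since $K\subseteq P_\lambda$, for every $\tuple{x}\in K^n$ the limit $\lim_{a\to 0}\lambda(a)\cdot\tuple{x}$ exists and equals $c_\lambda(\tuple{x})\in c_\lambda(K)^n=M^n\subseteq S_n(M)$. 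Hence $\Omega(K,M)=\Omega(K^n,S_n(M))$ is non-empty and the conclusions of \cite[Thm.~4.4]{GIT} are available.

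For (i), \cite[Thm.~4.4]{GIT} applied to the $H$-action furnishes a common R-parabolic subgroup $P_\mu(H)$ of $H$ for all $\mu\in\Omega(K,M)$, on which $R_u(P_\mu(H))$ acts simply transitively. The one genuinely new point---and the main place where care is needed---is the passage from R-parabolics of $H$ to R-parabolics of $G$. Given $\mu,\nu\in\Omega(K,M)$, simple transitivity gives $\nu=u\cdot\mu$ with $u\in R_u(P_\mu(H))=R_u(P_\mu)\cap H\subseteq P_\mu$, so, since $u$ lies in the subgroup $P_\mu$ of $G$, $P_\nu=P_{u\cdot\mu}=uP_\mu u\inverse=P_\mu$. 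This defines the R-parabolic $P(K,M):=P_\mu$ of $G$; then $P_\mu(H)=P(K,M)\cap H$, so $R_u(P(K,M)\cap H)$ acts simply transitively on $\Omega(K,M)$. Finally, since $\mu\in\Omega(K,M)$ the limit $\lim_{a\to 0}\mu(a)\cdot\tuple{k}$ exists, so the subgroup generated by $\tuple{k}$, and hence $K$ by Lemma~\ref{lem:generictuple}(ii), lies in $P(K,M)$.

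For (ii), I would observe that for $g\in N_G(H)$ one has $g\cdot K^n=(gKg\inverse)^n$ and $g\cdot S_n(M)=S_n(gMg\inverse)$, and that $gMg\inverse=c_{g\cdot\lambda}(gKg\inverse)$ with $g\cdot\lambda\in Y(H)$, so the conjugated data satisfy the hypotheses; the equivariance in \cite[Thm.~4.4]{GIT} then gives $\Omega(gKg\inverse,gMg\inverse)=g\cdot\Omega(K,M)$ and $P(gKg\inverse,gMg\inverse)=gP(K,M)g\inverse$. If in addition $g$ normalises $H$, $K$ and $S_n(M)$, then $g$ fixes both $X=K^n$ and $S=S_n(M)$, so $g\cdot\Omega(K,M)=\Omega(K,M)$; choosing $\mu\in\Omega(K,M)$ we find $g\cdot\mu=u\cdot\mu$ for some $u\in R_u(P_\mu(H))$, whence $u\inverse g$ centralises $\mu(k^*)$, so $u\inverse g\in L_\mu$ and $g\in P(K,M)$.

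Part (iii) is where the real work lies. For the dimension bound, note that every $\tuple{x}\in M^n$ satisfies $C_H(\tuple{x})\supseteq C_H(M)$, so $\dim H\cdot\tuple{x}\le\dim H-\dim C_H(M)$, with equality for generic tuples by Lemma~\ref{lem:generictuple}(i); since $H\cdot M^n$ is dense in $S_n(M)$ and orbit dimension is lower semicontinuous (cf.\ \cite[Prop.~I.1.8]{Bo}), every $H$-orbit in $S_n(M)$ has dimension at most $\dim H-\dim C_H(M)$. Applying this to $c_\mu(\tuple{k})\in S_n(M)$, which is a generic tuple of $c_\mu(K)$ by Lemma~\ref{lem:generictuple}(iii), gives $\dim C_H(c_\mu(K))\ge\dim C_H(M)$. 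If $M$ is $H$-conjugate to $K$, say $M=hKh\inverse$ with $h\in H$, then $S_n(M)=\overline{H\cdot(hKh\inverse)^n}=\overline{H\cdot K^n}\supseteq K^n$, so $X\subseteq S$ and the conventions of \cite[Sec.~4]{GIT} give $\Omega(K,M)=\{0\}$ and $P(K,M)=G$ (the trivial cocharacter being central). Finally, suppose $M$ is not $H$-conjugate to $K$ but, for a contradiction, $K\subseteq L_\mu$ for some $\mu\in Y(H)$ with $P_\mu=P(K,M)$; choosing $\nu\in\Omega(K,M)$, which has $P_\nu=P_\mu$, condition (c) of Theorem~\ref{thm:orbitcrit}(ii) holds for $\nu$, so $c_\nu(K)$ is $H$-conjugate to $K$ and hence $c_\nu(\tuple{k})$ is $H$-conjugate to $\tuple{k}$ by Theorem~\ref{thm:orbitcrit}(iii). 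As $c_\nu(\tuple{k})\in S_n(M)$ and $S_n(M)$ is $H$-stable and closed, this forces $\tuple{k}\in S_n(M)$, whence the semicontinuity bound gives $\dim C_H(K)\ge\dim C_H(M)$; combined with the reverse inequality from Theorem~\ref{thm:orbitcrit}(ii) this yields $\dim C_H(M)=\dim C_H(K)$, so $M$ is $H$-conjugate to $K$ by Theorem~\ref{thm:orbitcrit}(ii), a contradiction. The principal obstacle throughout is the bookkeeping required to run the formalism of \cite[Sec.~4]{GIT} with the proper subgroup $H$ as the acting group, in particular distinguishing R-parabolics of $H$ from those of $G$ and making the dimension comparison on $S_n(M)$ precise.
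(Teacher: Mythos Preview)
Your proposal is correct and follows essentially the same route as the paper: you apply the uniform instability machinery of \cite[Sec.~4]{GIT} with $(G',G,V,X,S)=(N_G(H),H,G^n,K^n,S_n(M))$, lift the R-parabolic from $H$ to $G$ via the simple transitivity of $R_u(P_\mu(H))$ on $\Omega(K,M)$ (exactly as the paper does), and handle (iii) by the dimension--semicontinuity argument on $S_n(M)$ combined with Theorem~\ref{thm:orbitcrit}. The only differences are cosmetic: the paper cites \cite[Thm.~4.5]{GIT} rather than \cite[Thm.~4.4]{GIT}, and for (iii) the paper simply refers to \cite[Thm.~5.16(iii)]{GIT}, whereas you have written out the argument explicitly.
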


\begin{proof}
We apply \cite[Thm.~4.5]{GIT} with $(G',G,V,X,S)=(N_G(H),H,G^n,K^n,S_n(M))$.
Since we associate to $K$ and $M$ an R-parabolic subgroup of $G$ rather than
$H$ in {\em loc.~cit.}, we need to give some more arguments.

(i) and (ii). The statements about $ \Omega(K,M)$
follow immediately from \cite[Thm.~4.5]{GIT}.
Let $\mu \in\Omega(K,M)$. By \cite[Thm.~4.5(iv)]{GIT},
$R_u(P_\mu(H)) = R_u(P_\mu) \cap H$ acts simply transitively
on $\Omega(K,M)$. Hence $P_\mu =P_\nu$ for all $\mu,\nu \in \Omega(K,M)$.
The final assertion in (ii) is proved in the same way as the final assertion of \cite[Thm.~4.5(iv)]{GIT}.

(iii).
The proof of this  is completely analogous to that of
\cite[Thm.~5.16(iii)]{GIT}.
Note that $P(K,M) = G$
implies that $\Omega(K,M)$ consists of the trivial cocharacter of $H$ only.
\end{proof}

\begin{rem}
Note that Theorem~\ref{thm:uniquegcr} provides an obvious choice for
the subgroup $M$ in Theorem \ref{thm:optpar}:
for by Theorem~\ref{thm:uniquegcr}, if we are given a subgroup $K$, then there is a
unique conjugacy class of subgroups of the form
$M = c_\lambda(K)$ for $\lambda \in Y(H)$ which
are relatively $G$-cr with respect to $H$.
Since these subgroups are all $H$-conjugate, the
set $S_n(M)$ does not depend on which representative $M$ we choose
from this conjugacy class, and hence the optimal
destabilizing R-parabolic subgroup $P(K,M)$ of $G$
also does not depend on the choice
of $M$ from this class. This leads to the following definition.
\end{rem}

\begin{defn}
\label{defn:optpar}
Let $K$ be any subgroup of $G$,
and let $M$ be a representative from the $H$-conjugacy class
attached to $K$ of subgroups which are relatively $G$-cr with
respect to $H$, provided by Theorem~\ref{thm:uniquegcr}.
Define $\Omega(K) = \Omega(K,M)$ and $P(K) = P(K,M)$.
By Theorems \ref{thm:uniquegcr} and \ref{thm:optpar},
$K$ and $N_{N_G(H)}(K)$ are contained in $P(K)$ and
for $\mu\in\Omega(K)$, $c_\mu(K)$ is relatively
$G$-completely reducible with respect to $H$.
So, by Theorem~\ref{thm:orbitcrit}(ii), if $K$ is not relatively
$G$-completely reducible with respect to $H$, then $K$ is not
contained in any R-Levi subgroup of $P(K)$ of the form $L_\mu$
with $\mu \in Y(H)$ and $P_\mu = P(K)$.
Note that, trivially, $P(K)=G$ if $K$ is relatively
$G$-completely reducible with respect to $H$.
We call $\Omega(K)$ the \emph{optimal class for $K$ relative to $H$}
and $P(K)$ the
\emph{optimal destabilizing R-parabolic subgroup for $K$ relative to $H$}.
In case $H = G$, we call $P(K)$ the
\emph{optimal destabilizing R-parabolic subgroup for $K$}.
\end{defn}

\begin{rem}
One can give analogues of Theorem \ref{thm:optpar} and Definition \ref{defn:optpar} for a Lie subalgebra
$\kk$ of $\gg$;
note that one can work with generating tuples for $\kk$ in place of generic tuples.
We leave the details to the interested reader.
\end{rem}

Armed with Theorems \ref{thm:mainthm}, \ref{thm:orbitcrit}
and \ref{thm:optpar},
we can extend many results about $G$-complete reducibility
within the framework of relative $G$-complete reducibility.
In the following subsections we aim to illustrate interesting
points of our new construction
by looking at a selection of results, mainly from \cite{BMR};
some of these results generalize immediately, while others are more subtle.
We observe that all the results below have interpretations in terms of
closedness of orbits in $G^n$ in view of Theorem \ref{thm:mainthm}.

\subsection{New relatively $G$-completely reducible subgroups from old}
\label{sub:newfromold}

In this subsection we explore how
to generate new relatively $G$-cr subgroups from a given relatively $G$-cr subgroup $M$;
for example, by taking suitable normal subgroups of $M$ or looking at suitable subgroups of $N_G(M)$.
Our first result generalizes \cite[Thm.~3.10]{BMR},
as the latter is simply the special case $H = G$
of Theorem \ref{thm:relativenormal}.
The apparent direct analogue of \cite[Thm.~3.10]{BMR} in this context,
namely that a normal subgroup $N$ of a subgroup $M$ of
$G$ is relatively $G$-cr with
respect to some reductive subgroup $H$ of $G$ provided $M$ is,
fails in general; see Examples \ref{exmp:relative1} and \ref{exmp:relative2} below.

\begin{thm}
\label{thm:relativenormal}
Let $H$ be a reductive subgroup of $G$ and
let $K$ and $M$ be subgroups of $G$ such that
$K\subseteq M\subseteq KN_{N_G(H)}(K)$. If $M$ is
relatively $G$-completely reducible with respect to $H$, then so is $K$.
\end{thm}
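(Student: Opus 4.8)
The plan is to use the optimal destabilizing machinery of Theorem \ref{thm:optpar} together with the invariance property recorded there. Suppose, for a contradiction, that $K$ is not relatively $G$-completely reducible with respect to $H$. Pick a generic tuple $\tuple{k}\in K^n$ of $K$ (enlarging $n$ if necessary), and let $M' = c_\lambda(K)$ be a representative of the canonical $H$-conjugacy class of relatively $G$-cr subgroups attached to $K$ by Theorem \ref{thm:uniquegcr}. By Definition \ref{defn:optpar} we then have the optimal destabilizing R-parabolic subgroup $P(K)=P(K,M')\subsetneq G$, and $K$ together with its normalizer $N_{N_G(H)}(K)$ are both contained in $P(K)$, by Theorems \ref{thm:uniquegcr} and \ref{thm:optpar}.

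The key step is to show that $M$ is also contained in $P(K)$ and is not conjugated into any R-Levi subgroup arising from a cocharacter in $\Omega(K)$, which will contradict the relative $G$-complete reducibility of $M$. Since $K\subseteq M\subseteq K\,N_{N_G(H)}(K)$, every element of $M$ is a product of an element of $K$ with an element normalizing $K$; as both factors lie in $P(K)$ we get $M\subseteq P(K)$. Now fix $\mu\in\Omega(K)$, so that $P_\mu=P(K)$. The heart of the argument is to check that $c_\mu$ restricted to $M$ cannot be an $H$-conjugation of $M$; equivalently, by Theorem \ref{thm:orbitcrit}(ii), that $K\subseteq L_{u\cdot\mu}$ fails for every $u\in R_u(P_\mu(H))$. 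Here I would use that the map $c_\mu$ is a homomorphism of algebraic groups whose restriction to the normalizing factor $N_{N_G(H)}(K)\cap P(K)$ carries $N_G(H)$-normalizers to $N_G(H)$-normalizers; since $c_\mu(K)$ is relatively $G$-cr with respect to $H$ by Definition \ref{defn:optpar} but $K$ is not, and since $c_\mu(M)$ is generated by $c_\mu(K)$ together with elements normalizing $c_\mu(K)$, the subgroup $c_\mu(K)$ is normal in $c_\mu(M)$. If $M$ were conjugate to $c_\mu(M)$ via $R_u(P_\mu(H))$, then tracking the normal subgroup $K\trianglelefteq M$ through this conjugation would force $K$ to be $R_u(P_\mu(H))$-conjugate to $c_\mu(K)$, contradicting that $\mu\in\Omega(K)$ destabilizes $K$.

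The main obstacle I expect is making precise the passage ``conjugacy of $M$ to $c_\mu(M)$ forces conjugacy of the distinguished normal subgroup $K$ to $c_\mu(K)$''. The cleanest way to handle this is to exploit the invariance in Theorem \ref{thm:optpar}(ii): any $g\in G$ normalizing $H$, $K$ and $S_n(M')$ lies in $P(K)$, and the optimal class $\Omega(K)$ is functorial under $N_G(H)$-conjugation. I would argue that $R_u(P_\mu(H))$-conjugation intertwines the filtration of $M$ by $K$ with that of $c_\mu(M)$ by $c_\mu(K)$, because $c_\mu$ and the relevant conjugation both respect the normal-subgroup relation; the detailed verification that these two structures are compatible—and that one can therefore peel off the $K$-part—is the delicate point. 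Once this is established, Theorem \ref{thm:optpar}(iii) applied to $K$ yields the contradiction, since $\mu\in\Omega(K)$ with $M$ relatively $G$-cr would place $K$ inside an R-Levi subgroup $L_{u\cdot\mu}$, which $\Omega(K)$ explicitly forbids. Hence $K$ must be relatively $G$-completely reducible with respect to $H$.
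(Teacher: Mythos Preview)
Your overall strategy---argue by contrapositive and use the optimal destabilizing R-parabolic subgroup $P(K)$ from Definition~\ref{defn:optpar}---is exactly the paper's approach, and your first paragraph is essentially complete and correct: you obtain $M\subseteq KN_{N_G(H)}(K)\subseteq P(K)$.

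However, your second and third paragraphs manufacture a difficulty that is not there. You say the ``heart of the argument'' is to show that if $c_\mu(M)$ were $R_u(P_\mu(H))$-conjugate to $M$ then one could track the normal subgroup $K$ through this conjugation and deduce that $c_\mu(K)$ is conjugate to $K$. This tracking step is genuinely problematic (a conjugation carrying $c_\mu(M)$ to $M$ need not send the particular normal subgroup $c_\mu(K)$ to $K$), and you are right to flag it as an obstacle---but it is entirely unnecessary. The paper's argument is a one-line containment: by Definition~\ref{defn:optpar}, $K$ is not contained in any R-Levi subgroup $L_\mu$ with $\mu\in Y(H)$ and $P_\mu=P(K)$. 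Since $K\subseteq M$, the subgroup $M$ cannot be contained in any such $L_\mu$ either. Hence $M$ is not relatively $G$-cr with respect to $H$, and you are done.

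In other words, once you know $M\subseteq P(K)$, you do not need to reason about $c_\mu(M)$, normal subgroups, or conjugacy at all; the failure of $K$ to lie in any relevant R-Levi subgroup immediately propagates to $M$ by set-theoretic containment.
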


\begin{proof}
We prove the contrapositive.
So suppose $K$ is not relatively $G$-cr with respect to $H$.
Let $P(K)$ be the optimal R-parabolic destabilizing subgroup for $K$ with respect to $H$ (Definition \ref{defn:optpar}).
Then by Definition~\ref{defn:optpar}, $M\subseteq KN_{N_G(H)}(K)\subseteq P(K)$ and
$K$ is not contained in any R-Levi subgroup of $P(K)$ of the form
$L_\mu$ with $\mu \in Y(H)$ and $P_\mu = P(K)$.
Hence $M$ cannot be contained in such an R-Levi subgroup of $P(K)$,
as $K \subseteq M$. Thus $M$ is not relatively $G$-cr with respect to $H$.
\end{proof}

The following result generalizes the
second statement of \cite[Prop.\ 3.19]{BMR}.

\begin{prop}
\label{prop:bmr3.19}
Suppose $H$ is a reductive subgroup of $G$.
Let $K$ be a subgroup
of $G$ which is relatively $G$-completely reducible with respect to $H$,
and suppose $M$ is a reductive subgroup of $G$ which contains $K$ and
is normalized by a maximal torus of $C_H(K)$.
Then $M$ is also relatively $G$-completely reducible with respect to $H$.
\end{prop}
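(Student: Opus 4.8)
The plan is to argue by contraposition, using the optimal destabilizing R-parabolic subgroup $P(M)$ of Definition~\ref{defn:optpar}. The delicate point to keep in mind throughout is that the hypothesis only gives a maximal torus $S$ of $C_H(K)$ that \emph{normalizes} $M$, not one that centralizes $M$; so one cannot simply feed $M$ into Proposition~\ref{prop:relGcrvsGir}(i), and the normalizing torus must instead be used to force $M$ off its unipotent directions. So suppose $M$ is not relatively $G$-cr with respect to $H$, fix $\lambda \in \Omega(M)$, put $P := P(M) = P_\lambda$ (so $\lambda \in Y(H)$), and recall from Definition~\ref{defn:optpar} and Theorem~\ref{thm:optpar} that $M \subseteq P$, that $N_{N_G(H)}(M) \subseteq P$, and that $M$ is contained in no R-Levi subgroup $L_\mu$ with $\mu \in Y(H)$ and $P_\mu = P$.

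First I would place $S$ inside $P$. Since $S \subseteq H$ and $S$ normalizes $M$, we have $S \subseteq N_{N_G(H)}(M) \subseteq P$. As $S$ is a torus in $P \cap H = P_\lambda(H)$, I replace $(K,M,S)$ by a suitable $R_u(P_\lambda(H))$-conjugate: this preserves all the hypotheses and merely permutes the R-Levi subgroups $L_\mu$ with $\mu \in Y(H)$, $P_\mu = P$, hence does not disturb the fact that $M$ lies in none of them, and it allows me to assume $S \subseteq L_\lambda$. In particular $\lambda(k^*)$ centralizes $S$, so $\lambda \in Y(C_H(S))$.

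Next I would bring in the hypothesis on $K$. Set $L = C_G(S)$. Because $S$ is a maximal torus of $C_H(K)$ and $K$ is relatively $G$-cr with respect to $H$, Proposition~\ref{prop:relGcrvsGir}(ii) shows $K$ is relatively $L$-irreducible with respect to $C_H(S)$. Now $K \subseteq M \subseteq P_\lambda$ and $K \subseteq C_G(S) = L$, so $K \subseteq P_\lambda(L) = P_\lambda \cap L$; since $\lambda \in Y(C_H(S))$, irreducibility forces $P_\lambda(L) = L$, that is, $L = C_G(S) \subseteq L_\lambda$. Consequently $S$ acts on $R_u(P_\lambda)$ with only the trivial fixed point, as $C_{R_u(P_\lambda)}(S) = R_u(P_\lambda) \cap C_G(S) = R_u(P_\lambda) \cap L \subseteq R_u(P_\lambda) \cap L_\lambda = \{1\}$. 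Since $M$ is reductive, $M \cap R_u(P_\lambda)$ is a normal unipotent subgroup of $M$ with trivial identity component, hence finite; being $S$-stable and sitting inside $R_u(P_\lambda)$, on which the connected group $S$ has only the trivial fixed point, it must be trivial, so $M \cap R_u(P_\lambda) = \{1\}$.

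The crux, which I expect to be the main obstacle, is to upgrade this to $M \subseteq L_\lambda$. Here I would establish the structural statement that an $S$-stable subgroup $M$ of $P_\lambda = L_\lambda \ltimes R_u(P_\lambda)$ satisfying $S \subseteq L_\lambda$, $C_{R_u(P_\lambda)}(S) = \{1\}$ and $M \cap R_u(P_\lambda) = \{1\}$ is automatically contained in $L_\lambda$. Writing each $m \in M$ as $m = c_\lambda(m)\,\rho(m)$ with $\rho(m) \in R_u(P_\lambda)$, the map $\rho$ is an $S$-equivariant cocycle on $c_\lambda(M)$ that vanishes on $S$-fixed elements, and the fixed-point-free $S$-action on $R_u(P_\lambda)$ — equivalently, the absence of the zero weight in an $S$-stable central filtration of $R_u(P_\lambda)$ with vector-group quotients — should force $\rho$ to be identically trivial. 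Granting this, $M \subseteq L_\lambda$, contradicting that $M$ lies in no R-Levi $L_\mu$ with $\mu \in Y(H)$ and $P_\mu = P$. I anticipate that verifying this vanishing (inducting along the filtration and checking that no nontrivial $S$-equivariant cocycle survives on the weight-nonzero quotients) is the only nonformal ingredient; the remaining steps are bookkeeping with the results cited above.
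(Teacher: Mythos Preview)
Your steps through ``$C_G(S)\subseteq L_\lambda$'' are fine, but the crux step is a genuine gap as written: you propose a cocycle-vanishing argument along an $S$-stable filtration of $R_u(P_\lambda)$, yet never carry it out, and such an argument would be delicate because nonzero $S$-weights on $R_u(P_\lambda)$ can coincide with nonzero $S$-weights on $L_\lambda$, so $S$-equivariance alone does not pin $\rho$ down weight-by-weight.  In fact the cocycle route is the wrong tool here.  From your own work you already have everything needed for a one-line finish: since $K\subseteq L\subseteq L_\lambda$ you get $\lambda(k^*)\subseteq C_H(K)$, and since $S\subseteq L_\lambda$ commutes with $\lambda(k^*)$ while $C_H(K)$ is reductive (Corollary~\ref{cor:bmr3.12}) with maximal torus $S$, it follows that $\lambda(k^*)\subseteq S$.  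Hence $\lambda(k^*)$ normalizes $M$, the group $M\lambda(k^*)$ is reductive and contained in $P_\lambda$, and $P_\lambda(M\lambda(k^*))=M\lambda(k^*)$ forces $\lambda$ to be central in $M\lambda(k^*)$; thus $M\subseteq L_\lambda$, the desired contradiction.

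The paper's proof uses exactly this key observation---forcing the cocharacter into $S$ and invoking reductivity---but does so directly, without the optimal-parabolic apparatus.  It first reduces to $S\subseteq M$ via Theorem~\ref{thm:relativenormal} (applied to $M\subseteq MS$), then for an arbitrary $\lambda\in Y(H)$ with $M\subseteq P_\lambda$ adjusts so that $K\subseteq L_\lambda$, conjugates $\lambda$ inside $C_H(K)\cap P_\lambda$ to land in the maximal torus $S\subseteq M$, and concludes that the resulting $\mu$ is central in the reductive group $M$, whence $M\subseteq L_\mu$.  Your route via $P(M)$ reaches the same endgame but buys nothing extra: the optimality of $P(M)$ is used only to place $S$ inside it (which any $P_\lambda$ containing $M$ would do once $S\subseteq M$), and the irreducibility statement from Proposition~\ref{prop:relGcrvsGir}(ii) is a detour to facts that follow more simply from the maximal-torus property of $S$ in $C_H(K)$.
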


\begin{proof}
Choose a maximal torus $S$ of $C_H(K)$ which normalizes $M$.
Since $MS$ is reductive and $M\subseteq MS\subseteq MN_H(M)$,
we may assume that $S\subseteq M$, by
Theorem~\ref{thm:relativenormal}.
Let $\lambda\in Y(H)$ be such that $M \subseteq P_\lambda$.
Since $M$ contains $K$ and $K$ is relatively $G$-cr with
respect to $H$, we may assume that $K\subseteq L_\lambda$.
Then we have $\lambda(k^*)\subseteq C_H(K)\cap P_\lambda$.
But $S$ is a maximal torus of $C_H(K)\cap P_\lambda$, so there exists
$g\in C_H(K)\cap P_\lambda$ such that for $\mu:=g\cdot\lambda$
we have $\mu(k^*)\subseteq S\subseteq M$. Clearly, we also have
$\mu\in Y(H)$ and $P_\mu=P_\lambda$.
Since $M \subseteq P_\lambda = P_\mu$, we have $P_\mu(M)=M$.
Since $M$ is reductive, this means that $\mu(k^*)$ is central in $M$,
by Definition \ref{defn:rpars}. So $M\subseteq L_\mu$, as required.
\end{proof}

\begin{rem}
Note that Proposition \ref{prop:bmr3.19} applies in particular to reductive
subgroups of $G$ which contain all of $KC_H(K)$.
In particular, if $K$ is relatively $G$-completely reducible with respect to $H$,
then, provided they are reductive, $N_G(K)$ and $KC_G(K)$ are
relatively $G$-completely reducible with respect to $H$.
This generalizes \cite[Cor.~3.16]{BMR}; recall that
$N_G(K)$ and $KC_G(K)$ are automatically reductive
if $K$ is a $G$-cr subgroup of $G$, cf.~\cite[Prop.~3.12]{BMR}.
\end{rem}

The following result generalizes \cite[Prop.~3.20]{BMR}.

\begin{cor}
\label{cor:bmr3.19}
Let $H$ be a reductive subgroup of $G$.
Then any reductive subgroup of $G$ which is normalized by
a maximal torus of $H$ is relatively $G$-completely
reducible with respect to $H$.
\end{cor}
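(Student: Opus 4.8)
The plan is to deduce this immediately from Proposition~\ref{prop:bmr3.19} by applying that result with the trivial subgroup $K=\{1\}$. The key observation is that when $K$ is trivial the hypotheses of Proposition~\ref{prop:bmr3.19} collapse to precisely the hypotheses of the corollary, so no new work is needed beyond checking two elementary facts.

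First I would verify that the trivial subgroup $\{1\}$ is relatively $G$-cr with respect to $H$. This is immediate from Definition~\ref{defn:relativelystrong}: for any $\lambda\in Y(H)$ the inclusion $\{1\}\subseteq P_\lambda$ holds automatically, and taking $\mu=\lambda$ gives $\{1\}\subseteq L_\lambda$, as required. Second, I would note that $C_H(\{1\})=H$, so that a maximal torus of $H$ is exactly a maximal torus of $C_H(\{1\})$.

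With these in hand, let $M$ be a reductive subgroup of $G$ normalized by a maximal torus $T$ of $H$. Then $M$ contains $\{1\}$, and $M$ is normalized by $T$, which is a maximal torus of $C_H(\{1\})$. Hence the hypotheses of Proposition~\ref{prop:bmr3.19} are satisfied with $K=\{1\}$, and we conclude that $M$ is relatively $G$-completely reducible with respect to $H$. There is no genuine obstacle here: the corollary is simply the special case $K=\{1\}$ of the preceding proposition, and the only verification required—that the trivial subgroup is relatively $G$-cr with respect to $H$—is entirely routine.
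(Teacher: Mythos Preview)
Your proof is correct and, in fact, cleaner than the paper's own argument. Both proofs reduce to Proposition~\ref{prop:bmr3.19}, but with a different choice of the subgroup $K$ appearing there. You take $K=\{1\}$, which makes the verification of the hypotheses entirely trivial: $\{1\}$ is obviously relatively $G$-cr with respect to $H$, and $C_H(\{1\})=H$, so the condition ``normalized by a maximal torus of $C_H(K)$'' becomes exactly ``normalized by a maximal torus of $H$''. The paper instead lets $S$ be a maximal torus of $H$ normalizing the given reductive subgroup, first invokes Theorem~\ref{thm:relativenormal} to reduce to the case where the subgroup contains $S$, then cites \cite[Lem.~2.6]{BMR} and Remark~\ref{rem:relativelystrong}(ii) to see that $S$ is relatively $G$-cr with respect to $H$, and finally applies Proposition~\ref{prop:bmr3.19} to the inclusion of $S$. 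Your route avoids all three of these auxiliary steps; there is no advantage to the paper's longer path here.
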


\begin{proof}
Let $K$ be a reductive subgroup of $G$ which is normalized by a
maximal torus of $H$.
By Theorem \ref{thm:relativenormal}, we may assume that $K$
contains a maximal torus $S$ of $H$.
Now $S$ is $H$-cr \cite[Lem.~2.6]{BMR}, so $S$ is relatively $G$-cr with respect to $H$ (Remark \ref{rem:relativelystrong}(ii)).  The result follows from Proposition \ref{prop:bmr3.19} applied to the inclusion $S\subseteq K$.
\end{proof}


\subsection{Relative $G$-complete reducibility with respect to different subgroups of $G$}
\label{sub:varyH}
In our next set of results we explore
what happens when we vary the reductive subgroup
$H$, rather than  $K$.
The first result generalizes one direction of \cite[Prop.~2.8]{BMR2}.
Note that the converse in this case is not true;
just take any example of $G$, $H$
and $K$ where $K$ is not relatively $G$-cr with
respect to $H$, and let $N = \{1\}$. 

\begin{prop}\label{prop:normalrelative}
Suppose $H$ is a reductive subgroup of $G$ and $N$ is a normal subgroup of $H$.
For any subgroup $K$ of $G$,
if $K$ is relatively $G$-completely reducible with respect to $H$,
then $K$ is relatively $G$-completely reducible with respect to $N$.
\end{prop}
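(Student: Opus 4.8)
The plan is to verify the defining condition (Definition~\ref{defn:relativelystrong}) directly, exploiting the fact that a cocharacter of $N$ conjugated by an element of $H$ stays in $Y(N)$. First I would observe that the conclusion even makes sense: $N$ is reductive, since $R_u(N)$ is characteristic in $N$ and hence normal in $H$, so being connected and unipotent it lies in $R_u(H)=\{1\}$. Thus ``relatively $G$-cr with respect to $N$'' is a legitimate notion, and one may speak of $P_\mu$, $L_\mu$ for $\mu \in Y(N)$.

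Now fix $\lambda \in Y(N)$ with $K \subseteq P_\lambda$; the goal is to produce $\mu \in Y(N)$ with $P_\mu = P_\lambda$ and $K \subseteq L_\mu$. Since $Y(N) \subseteq Y(H)$ and $K$ is relatively $G$-cr with respect to $H$, Lemma~\ref{lem:relativelystrong}(ii) supplies an element $u \in R_u(P_\lambda(H)) = R_u(P_\lambda)\cap H$ with $K \subseteq L_{u\cdot\lambda}$. I would then set $\mu := u\cdot\lambda$ and check three things: (a) $\mu \in Y(N)$, because $\mu(k^*) = u\,\lambda(k^*)\,u^{-1} \subseteq uNu^{-1} = N$, using $u\in H$ and the normality of $N$ in $H$; (b) $P_\mu = uP_\lambda u^{-1} = P_\lambda$, since $u \in R_u(P_\lambda) \subseteq P_\lambda$; and (c) $K \subseteq L_{u\cdot\lambda} = L_\mu$ by construction. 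This $\mu$ is exactly what Definition~\ref{defn:relativelystrong} requires for $N$, and closes the argument.

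The only real content is step (a): this is precisely where normality of $N$ in $H$ is used, and it is what fails for a non-normal reductive subgroup (which is consistent with the remark that the converse, and the naive analogue for non-normal subgroups, breaks down). Everything else is formal, so I expect no genuine obstacle. As an alternative route one could instead pass to a generic tuple $\tuple{k}$ of $K$ and use Theorem~\ref{thm:orbitcrit}(iii): the hypothesis gives that $H\cdot\tuple{k}$ is closed in $G^n$, and since $N\trianglelefteq H$ one has $H\cdot\tuple{k} = \bigcup_{h\in H} N\cdot(h\cdot\tuple{k})$, a union of $N$-orbits all of dimension $\dim N\cdot\tuple{k}$; as the boundary of $N\cdot\tuple{k}$ would consist of $N$-orbits of strictly smaller dimension, it must be empty, whence $N\cdot\tuple{k}$ is closed and Theorem~\ref{thm:orbitcrit}(iii) applied to $N$ yields the claim. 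I would present the direct cocharacter argument, as it is shorter and self-contained.
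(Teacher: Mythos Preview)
Your primary argument is correct and is essentially identical to the paper's proof: both invoke Lemma~\ref{lem:relativelystrong}(ii) to produce $u\in R_u(P_\lambda(H))$ with $K\subseteq L_{u\cdot\lambda}$, and then use normality of $N$ in $H$ to conclude $u\cdot\lambda\in Y(N)$. Your alternative closed-orbit argument via Theorem~\ref{thm:orbitcrit}(iii) is also valid (the key point being that all $N$-orbits in the closed set $H\cdot\tuple{k}$ are $H$-conjugate and hence equidimensional), but the paper does not pursue this route.
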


\begin{proof}
First note that since $H$ is reductive and $N$ is normal in $H$, $N$ is reductive.
Suppose $\lambda \in Y(N) \subseteq Y(H)$ is such that $K \subseteq P_\lambda$.
Then, as $K$ is relatively $G$-cr with respect to $H$, there
exists $u \in R_u(P_\lambda(H))$ such that $K \subseteq uL_\lambda u\inverse = L_{u\cdot\lambda}$,
by Lemma \ref{lem:relativelystrong}.
But $u \in H$, so $u$ normalizes $N$, and thus $u\cdot\lambda \in Y(N)$,
and we are done.
\end{proof}

\begin{cor}
Let $H$ and $K$ be subgroups of $G$ such that $H$ is reductive and $K$ is relatively $G$-completely reducible
with respect to $H$.
Suppose $M$ is a reductive subgroup of $H$ which is normalized by $C_H(K)$.
Then $K$ is relatively $G$-completely reducible with respect to $M$.
\end{cor}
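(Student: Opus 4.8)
The plan is to reduce the statement to the normal-subgroup case already handled in Proposition \ref{prop:normalrelative}. The natural group to interpose is $H' := M\cdot C_H(K)$. Since $C_H(K)$ normalizes $M$ by hypothesis, this is a genuine (closed) subgroup of $H$, and $M$ is normal in $H'$. The first thing I would check is that $H'$ is reductive: because $K$ is relatively $G$-cr with respect to $H$, Corollary \ref{cor:bmr3.12} gives that $C_H(K)$ is reductive, so $H'/M \cong C_H(K)/(C_H(K)\cap M)$ is reductive; as $M$ is reductive too, the image of $R_u(H')$ in the reductive quotient $H'/M$ is trivial, whence $R_u(H')\subseteq M$, and being connected, unipotent and normal in $M$ it lies in $R_u(M)=\{1\}$. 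Thus $H'$ is reductive, and Proposition \ref{prop:normalrelative} will apply to the pair $M\trianglelefteq H'$ once we know that $K$ is relatively $G$-cr with respect to $H'$.

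The heart of the argument is therefore to show that $K$ is relatively $G$-completely reducible with respect to $H'$. Take $\lambda\in Y(H')$ with $K\subseteq P_\lambda$. Since $H'\subseteq H$ we have $\lambda\in Y(H)$, so relative $G$-complete reducibility with respect to $H$ together with Lemma \ref{lem:relativelystrong}(ii) produces $u\in R_u(P_\lambda(H))$ with $K\subseteq L_{u\cdot\lambda}$. The key observation is that the cocharacter $\mu:=u\cdot\lambda$ automatically lands in $Y(C_H(K))$: indeed $\mu\in Y(H)$ because $u\in H$, while $K\subseteq L_\mu=C_G(\mu(k^*))$ forces $\mu(k^*)\subseteq C_G(K)$, so $\mu(k^*)\subseteq C_H(K)\subseteq H'$. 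As $P_\mu=P_{u\cdot\lambda}=P_\lambda$ and $K\subseteq L_\mu$ with $\mu\in Y(H')$, this is exactly the condition in Definition \ref{defn:relativelystrong} verifying that $K$ is relatively $G$-cr with respect to $H'$. Applying Proposition \ref{prop:normalrelative} with $H'$ in place of $H$ and the normal subgroup $M$ in place of $N$ then yields that $K$ is relatively $G$-completely reducible with respect to $M$, as desired.

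I expect the main obstacle to be spotting the right intermediate group $H'$ and recognizing the ``for free'' fact that the Levi-witnessing cocharacter $\mu=u\cdot\lambda$ centralizes $K$ and hence lies in $C_H(K)\subseteq H'$; once this is noticed, reductivity of $H'$ and the appeal to Proposition \ref{prop:normalrelative} are routine. An alternative, more hands-on route avoids $H'$ entirely: after producing $\mu=u\cdot\lambda\in Y(C_H(K))$ with $P_\mu=P_\lambda$ and $K\subseteq L_\mu$, one forms the reductive group $M':=M\cdot\mu(k^*)$ (note $\mu(k^*)$ normalizes $M$), observes that $R_u(P_\lambda(M'))\subseteq M$ because unipotent elements of $M'$ lie in $M$, and uses Lemma \ref{lem:relativelystrong}(i) inside $M'$ to conjugate $L_\lambda$ to $L_\mu$ by some $v\in R_u(P_\lambda(M'))\subseteq M$; then $\mu':=v\cdot\lambda\in Y(M)$ satisfies $P_{\mu'}=P_\lambda$ and $K\subseteq L_{\mu'}$. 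I would present the first route, since it most directly leverages the already-established Proposition \ref{prop:normalrelative}.
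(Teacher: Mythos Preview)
Your proof is correct and follows essentially the same route as the paper: introduce $H'=MC_H(K)$, use the key observation that the witnessing cocharacter $\mu=u\cdot\lambda$ lands in $Y(C_H(K))\subseteq Y(H')$, and invoke Proposition~\ref{prop:normalrelative} for the normal subgroup $M\trianglelefteq H'$. The paper phrases the reduction as ``we may assume $C_H(K)\subseteq M$'' rather than explicitly passing through $H'$, but the logic is identical; you are in fact slightly more careful than the paper in explicitly verifying that $H'=MC_H(K)$ is reductive (via Corollary~\ref{cor:bmr3.12}), which is needed for Proposition~\ref{prop:normalrelative} to apply.
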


\begin{proof}
Since $C_H(K)$ normalizes $M$, $M$ is normal in $MC_H(K)$.
By Proposition \ref{prop:normalrelative},
if $K$ is relatively $G$-cr with respect to $MC_H(K)$, then $K$ is relatively
$G$-cr with respect to $M$, so we may assume that $C_H(K) \subseteq M$.
Let $\lambda \in Y(M)$ such that $K \subseteq P_\lambda$.
Then, as $K$ is relatively $G$-cr with respect to $H$, there
exists $u \in R_u(P_\lambda(H))$ such that $K \subseteq L_{u\cdot\lambda}$,
by Lemma \ref{lem:relativelystrong}.
But then $u\cdot\lambda \in Y(C_H(K)) \subseteq Y(M)$, and we are done.
\end{proof}

Our final proposition in this subsection is a strengthening of Proposition~\ref{prop:relGcrvsGir}(i) and \cite[Prop.\ 5.7]{BMRT}.
Before we begin the build-up to the result, we note that it would be possible to adapt the proof of \cite[Prop.\ 5.7]{BMRT}
to prove our new result in positive characteristic.
However, there are problems in characteristic zero: \cite[Prop.\ 5.7]{BMRT} is easy to prove in characteristic zero
because of the nice characterization of $G$-complete reducibility in this case (a subgroup is $G$-cr if and only if it is reductive),
but we have no such characterization of relative $G$-complete reducibility in characteristic zero, so we need a new proof.
The following preparatory work, which is motivated by the notion of a generic tuple from Section \ref{subsec:generictuple}, allows us to present a proof which is valid in any
characteristic.

Let $W$ be a finite-dimensional vector space over $k$ and let $n\in \NN$.
We have an action of $\GL_n$ on $W^n$ given by
$$ A\cdot (v_1,\ldots, v_n)= (v_1',\ldots, v_n'), $$
where
$$ v_i'= \sum_{j=1}^n a_{ij}v_j $$
and $a_{ij}$ is the $ij$-component of $A$.
This action of $\GL_n$ commutes with the
diagonal $\GL(W)$-action on $W^n$.

Recall the notion of a stable point from Definition \ref{defn:stability};
and recall from Remark \ref{rem:stability} that if
$v$ is $G$-stable, then
\begin{equation*}
 \pi^{-1}(\pi(v))= G\cdot v,
\end{equation*}
where $\pi\colon V\ra V/\!\!/G$ is the canonical morphism.

\begin{lem}
\label{lem:Thetastab}
 Let ${\mathbf w}= (w_1,\ldots, w_n)\in W^n$ such that the $w_i$
are linearly independent.
Let $F$ be a subgroup of $\GL_n$ such that $F$ is a finite
extension of ${\rm
SL}_n$.  Then ${\mathbf w}$ is a stable point for the action
of $F$ on $W^n$.
\end{lem}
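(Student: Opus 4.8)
The plan is to verify the two defining conditions of a stable point from Definition~\ref{defn:stability} directly: that the stabilizer $C_F({\mathbf w})$ is finite modulo the kernel $Z$ of the action, and that the orbit $F\cdot{\mathbf w}$ is closed in $W^n$. The stabilizer condition is essentially immediate. If $A\in\GL_n$ fixes ${\mathbf w}$, then $\sum_j a_{ij}w_j=w_i$ for all $i$, and since the $w_i$ are linearly independent this forces $a_{ij}=\delta_{ij}$, i.e.\ $A=1$. Hence $C_F({\mathbf w})\subseteq C_{\GL_n}({\mathbf w})=\{1\}$, and a fortiori the kernel $Z=\bigcap_{v}C_F(v)\subseteq C_F({\mathbf w})$ is trivial, so $C_F({\mathbf w})/Z$ is finite. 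All the real work therefore goes into showing the orbit is closed.

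For closedness, the key idea is to exhibit $F\cdot{\mathbf w}$ as the preimage of a finite set under a morphism, using the top exterior power. Set $\omega=w_1\wedge\cdots\wedge w_n\in\bigwedge^n W$, which is nonzero since the $w_i$ are linearly independent, and consider the morphism $\Phi\colon W^n\to\bigwedge^n W$ given by $\Phi(v_1,\ldots,v_n)=v_1\wedge\cdots\wedge v_n$. A short computation with the action formula gives $\Phi(A\cdot{\mathbf w})=(\det A)\,\omega$ for every $A\in\GL_n$, so $\Phi$ carries $F\cdot{\mathbf w}$ into the set $\det(F)\cdot\omega$. Because $F$ is a finite extension of $\SL_n=\ker(\det)$, the image $\det(F)$ is a finite subgroup of $k^*$, and hence $\det(F)\cdot\omega$ is a finite, therefore closed, subset of $\bigwedge^n W$.

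I would then prove that $F\cdot{\mathbf w}$ is precisely the preimage $\Phi^{-1}(\det(F)\cdot\omega)$. The inclusion ``$\subseteq$'' is the computation above. For ``$\supseteq$'', suppose $\Phi({\mathbf v})=\zeta\,\omega$ with $\zeta\in\det(F)$; since $\zeta\omega\neq 0$ the vectors $v_i$ are linearly independent, and as a nonzero decomposable $n$-vector determines its span, the $v_i$ span the same subspace $U=\langle w_1,\ldots,w_n\rangle$ as the $w_i$. Thus there is a unique $A\in\GL_n$ with ${\mathbf v}=A\cdot{\mathbf w}$, and $\det A=\zeta\in\det(F)$; choosing $g\in F$ with $\det g=\zeta$ gives $Ag^{-1}\in\SL_n\subseteq F$, whence $A\in F$ and ${\mathbf v}\in F\cdot{\mathbf w}$. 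Since $\Phi$ is a morphism and $\det(F)\cdot\omega$ is closed, $F\cdot{\mathbf w}=\Phi^{-1}(\det(F)\cdot\omega)$ is closed, which completes the verification.

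The only genuinely delicate point is the reverse inclusion in the last step: one must know that the fibre condition $v_1\wedge\cdots\wedge v_n=\zeta\,\omega$ not only makes the $v_i$ independent but forces them to span exactly $U$, so that the change-of-basis matrix $A$ exists and automatically lands in $F$. Everything else is routine. I note that one could repackage the same calculation by writing $F$ as a finite union of cosets $g\SL_n$, checking that the single $\SL_n$-orbit $\{{\mathbf v}: v_1\wedge\cdots\wedge v_n=\omega\}$ is closed, and then taking the finite union of its translates; this is an equivalent route to the same conclusion.
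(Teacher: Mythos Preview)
Your proof is correct, but it follows a different route from the paper's. The paper establishes closedness via the Hilbert--Mumford Theorem: given $\lambda\in Y(F)$ such that $\lim_{a\to 0}\lambda(a)\cdot{\mathbf w}$ exists, one conjugates $\lambda$ by some $A\in\GL_n$ into the diagonal torus, so that $(A\cdot\lambda)(a)\cdot{\mathbf w}'=(a^{m_1}w_1',\ldots,a^{m_n}w_n')$ for integers $m_i$; since $F^0=\SL_n$ one has $\sum m_i=0$, and since each $w_i'\neq 0$ and the limit exists one has each $m_i\geq 0$, forcing all $m_i=0$ and hence $\lambda$ trivial.

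Your argument instead identifies the orbit explicitly as the fibre $\Phi^{-1}(\det(F)\cdot\omega)$ of the wedge map $\Phi\colon W^n\to\bigwedge^n W$, which is closed because $\det(F)$ is finite. This is more elementary---it avoids invoking Hilbert--Mumford---and yields an explicit description of the orbit as a closed subvariety, not merely the assertion that no cocharacter moves ${\mathbf w}$ out of its orbit. The paper's approach, on the other hand, stays within the cocharacter/GIT framework used throughout and is slightly shorter. Both are clean; your version would stand on its own without the background on limits along cocharacters.
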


\begin{proof}
Since the $w_i$ are linearly independent, $C_F({\mathbf w})$ is trivial.
It remains to check that $F\cdot {\mathbf w}$ is closed.
Let $\lambda\in Y(F)$ such that
$\underset{a\to 0}{\lim}\,  \lambda(a)\cdot {\mathbf w}$ exists.
Choose $A\in \GL_n$ such that
$A\cdot \lambda$ takes values in the subgroup of $F$ of diagonal matrices.
Set ${\mathbf w}'=
A\cdot {\mathbf w}= (w_1',\ldots, w_n')$.
Then $\underset{a\to 0}{\lim}\,  (A\cdot \lambda)(a)\cdot
{\mathbf w}'$ exists. There exist $m_1,\ldots,m_n\in {\mathbb Z}$ such that for all
$a\in k^*$, we have
$$
(A\cdot \lambda)(a)\cdot(w_1',\ldots, w_n')=(a^{m_1}w_1',\ldots, a^{m_n} w_n').
$$
Since $F^0= {\rm SL}_n$, we must have $\sum_{i=1}^n m_i= 0$.
As $w_i'\neq 0$ and
$\underset{a\to 0}{\lim}\,  a^{m_i}w_i'$ exists for each $i$,
we must have $m_i\geq 0$ for each $i$.
This forces all the $m_i$ to be zero, and so $A\cdot \lambda$ is trivial, hence
$\lambda$ is trivial.
We conclude that $F\cdot {\mathbf w}$ is closed, by the
Hilbert-Mumford Theorem.
\end{proof}

Now suppose that ${\rm dim}\,W=n$ and ${\mathbf w}\in W^n$ consists of a basis for $W$.  Let $g\in\GL(W)$. Then $g\cdot {\mathbf w}$ also consists of
a basis for $W$. Hence there
exists a unique element $A(g,{\mathbf w})\in {\rm GL}_n$ such that
\begin{equation}
\label{eqn:A(g)}
 g\cdot {\mathbf w}= A(g,{\mathbf w})\cdot {\mathbf w}.
\end{equation}
Let ${\mathbf w}'\in W^n$
be another basis for $W$.  We can write
${\mathbf w}'= A\cdot {\mathbf w}$ for some $A\in {\rm GL}_n$.
Using the fact that the actions of $\GL(W)$ and $\GL_n$ commute,
a straightforward calculation shows that
$$ A(g,{\mathbf w}')= AA(g,{\mathbf w})A^{-1}. $$
Hence ${\rm det}\,A(g,{\mathbf w})$ is independent of ${\mathbf w}$.
Moreover, if
$g'\in \GL(W)$, then we have
$$ A(g'g,{\mathbf w})= A(g',g\cdot {\mathbf w})A(g,{\mathbf w}).$$
It follows that the map
\begin{equation*}
\GL(W)\ra k^*, \ g\mapsto \det A(g,{\mathbf w})
\end{equation*}
is a homomorphism and is independent of ${\mathbf w}$.

We can now give our strengthening of Proposition~\ref{prop:relGcrvsGir}(i)
and \cite[Prop.~5.7]{BMRT}; note our new result
is significantly stronger, since we do not require
$K$ to be a subgroup of $C_G(S)$.
In the case $H=G$, Proposition \ref{prop:SnormalizesK}
and Corollary \ref{cor:SnormalizesK}
identify instances where
$G$-complete reducibility implies or is implied by relative $G$-complete
reducibility with respect to a proper subgroup.

\begin{prop}
\label{prop:SnormalizesK}
Let $K$, $H$ and $S$ be subgroups of $G$ such that $H$ is reductive and
$S$ normalizes $H$ and $K$. Put $M=C_H(S)^0$.
\begin{enumerate}[{\rm (i)}]
\item Suppose that $S$ is reductive and $HS$-completely reducible. Then $M$ is
reductive. Moreover, $K$ is relatively $G$-completely reducible
with respect to $H$ if it is relatively
$G$-completely reducible
with respect to $M$.
\item Suppose that
 \begin{enumerate}[{\rm (a)}]
 \item $\hh/\cc_\hh(S)$ does not have any trivial $S$-composition factors;
 \item $\cc_\hh(S)=\Lie C_H(S)$;
 \item $M$ is reductive.
 \end{enumerate}
Then $K$ is relatively $G$-completely reducible
with respect to $M$ if it is relatively $G$-completely reducible
with respect to $H$.
\end{enumerate}
\end{prop}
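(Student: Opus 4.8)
I would prove the two implications separately, using the optimal-cocharacter machinery of Theorem~\ref{thm:optpar} for part~(i) and the orbit-closure criterion of Theorem~\ref{thm:orbitcrit}(iii) together with the preparatory results above for part~(ii). For part~(i), to see that $M$ is reductive note that the hypothesis that $S$ is $HS$-completely reducible presupposes $HS$ reductive, so $C_{HS}(S)$ is reductive by \cite[Prop.~3.12]{BMR} (equivalently Corollary~\ref{cor:bmr3.12} applied with $G=H=HS$ and $K=S$). Since $H$ is normal in $HS$, the group $C_H(S)=C_{HS}(S)\cap H$ is normal in $C_{HS}(S)$, and a normal subgroup of a reductive group is reductive; hence $M=C_H(S)^0$ is reductive. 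For the implication I would argue contrapositively: suppose $K$ is not relatively $G$-cr with respect to $H$, and let $P(K)$ and $\Omega(K)\subseteq Y(H)$ be the optimal destabilizing R-parabolic subgroup and the optimal class for $K$ relative to $H$ (Definition~\ref{defn:optpar}). Because $S$ normalizes $H$ and $K$ and stabilizes the relevant set $S_n(M)$, Theorem~\ref{thm:optpar}(ii) shows $S$ normalizes $P(K)$ and $\Omega(K)$; in particular $S$ normalizes the R-parabolic subgroup $P(K)\cap H$ of $H$. As $S$ is $HS$-cr, this subgroup has an $S$-stable R-Levi subgroup $L(H)$ by \cite[Lem.~5.1]{BMRT}. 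By Theorem~\ref{thm:optpar}(i), $R_u(P(K)\cap H)$ acts simply transitively on $\Omega(K)$, so there is a unique $\lambda\in\Omega(K)$ with $L_\lambda(H)=L(H)$; since both $\Omega(K)$ and $L(H)$ are $S$-stable, uniqueness forces $s\cdot\lambda=\lambda$ for all $s\in S$, i.e.\ $\lambda\in Y(H)^S=Y(C_H(S))=Y(M)$. Now $\lambda\in Y(M)$ satisfies $K\subseteq P_\lambda=P(K)\ne G$, while by Definition~\ref{defn:optpar} no $\mu\in Y(H)$ (hence none in $Y(M)$) has $P_\mu=P(K)$ and $K\subseteq L_\mu$. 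Thus $\lambda$ witnesses that $K$ is not relatively $G$-cr with respect to $M$.

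For part~(ii), by Theorem~\ref{thm:orbitcrit}(iii) it suffices to show that if $H\cdot\tuple{k}$ is closed in $G^n$ for a generic tuple $\tuple{k}$ of $K$, then $C_H(S)\cdot\tuple{k}$ (equivalently $M\cdot\tuple{k}$) is closed. The difficulty is that $S$ normalizes $K$ but need not fix $\tuple{k}$, so no fixed-point reduction applies directly. To remedy this I would use the preparatory construction: fix an embedding $G\hookrightarrow\GL(W)$, let $A$ be the associative subalgebra of $\End(W)$ spanned by $K$, put $n=\dim A$, and take $\tuple{k}$ to be a basis of $A$ lying in $K$ (a generic tuple). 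Since $S$ normalizes $K$ it normalizes $A$, and by \eqref{eqn:A(g)} the conjugation action of $s\in S$ on $\tuple{k}$ coincides with the $\GL_n$-action of the matrix $\rho(s):=A(s,\tuple{k})$, with $s\mapsto\rho(s)^{-1}$ a homomorphism. Using the character $s\mapsto\det\rho(s)$ together with Lemma~\ref{lem:Thetastab}, I would produce a finite extension $F$ of $\SL_n$ containing $\rho(S)$ for which $\tuple{k}$ is a stable, hence closed, point of the $F$-action. Setting $\hat H=H\times F$ (the factors commute on $W^n$) and $\tilde S=\{(s,\rho(s)^{-1}):s\in S\}\cong S$, the tuple $\tuple{k}$ is now fixed by $\tilde S$, and $C_{\hat H}(\tilde S)=C_H(S)\times C_F(\rho(S))$.

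With $\tuple{k}$ an $\tilde S$-fixed point, the plan is to run a Richardson-type fixed-point reduction, transferring closedness of $H\cdot\tuple{k}$ through the twist to closedness of $C_H(S)\cdot\tuple{k}$ and using that $\tuple{k}$ is a stable point for the commuting $F$-action. Here the hypotheses enter: condition~(c) makes $M$ reductive, so that closedness of $C_H(S)\cdot\tuple{k}$ is equivalent to relative $G$-complete reducibility with respect to $M$ via Theorem~\ref{thm:orbitcrit}(iii), while conditions~(a) and (b) replace the reductivity of $S$—which is \emph{not} assumed—in the reduction: (b), $\cc_\hh(S)=\Lie C_H(S)$, identifies the infinitesimal fixed points with the Lie algebra of the centralizer, and (a), that $\hh/\cc_\hh(S)$ has no trivial $S$-composition factors, guarantees that the $\tilde S$-fixed part of the tangent space $T_{\tuple{k}}(\hat H\cdot\tuple{k})$ reduces to $T_{\tuple{k}}(C_{\hat H}(\tilde S)\cdot\tuple{k})$. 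The main obstacle I anticipate is precisely this reduction in positive characteristic: without reductivity of $S$ one cannot invoke \cite[Thm.~A]{rich1} verbatim, and the delicate point is to check that (a) and (b) pass to the twisted data $(\hat H,\tilde S)$—in particular that the $\Sl_n$-summand of $\hat{\hh}$ contributes no trivial $\tilde S$-composition factors and no infinitesimal stabilizer of $\tuple{k}$, which is exactly where the stability assertion of Lemma~\ref{lem:Thetastab} (finiteness of $C_F(\tuple{k})$) is needed.
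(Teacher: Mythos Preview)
Your argument for part~(i) is essentially the paper's: both use the optimal class $\Omega(K)$ relative to $H$, locate an $S$-fixed cocharacter $\lambda\in\Omega(K)$ via \cite[Lem.~5.1]{BMRT} and the simple transitivity in Theorem~\ref{thm:optpar}(i), and conclude. (The paper obtains reductivity of $M$ from \cite[Thm.~5.4(a)]{BMRT} rather than via normality of $C_H(S)$ in $C_{HS}(S)$, but your route is also valid.)

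For part~(ii) your approach diverges from the paper's and leaves a genuine gap. Instead of forming a product $\hat H=H\times F$ with a twisted diagonal $\tilde S$, the paper passes to the \emph{quotient} $\pi\colon(\Mat_m)^n\to(\Mat_m)^n/\!\!/F$. Because $s\cdot\tuple{e}$ and $\tuple{e}$ differ by an element of $F$, the image $\pi(\tuple{e})$ is genuinely $S$-fixed, and the $H$-action descends since $H$ and $F$ commute. One then applies \cite[Thm.~5.4(b)]{BMRT} to the \emph{original} pair $(H,S)$ acting on the quotient---so hypotheses (a), (b), (c) apply verbatim, with nothing to re-verify---to get $\pi(\tuple{e}')\in M\cdot\pi(\tuple{e})$ whenever $\tuple{e}'=\lim_{a\to 0}\lambda(a)\cdot\tuple{e}$ with $\lambda\in Y(M)$, and then lifts back to $\tuple{e}'\in M\cdot\tuple{e}$ using Lemma~\ref{lem:Thetastab} together with \cite[Cor.~3.5(ii)]{GIT}.

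Your plan, by contrast, requires conditions (a) and (b) for the twisted pair $(\hat H,\tilde S)$: in particular that $\Sl_n/\cc_{\Sl_n}(\rho(S))$ has no trivial $\rho(S)$-composition factors and that $\cc_{\Sl_n}(\rho(S))=\Lie C_F(\rho(S))$. Neither of these follows from the given hypotheses on $(H,S)$, and in positive characteristic there is no reason to expect them; this is the gap, not merely a ``delicate verification''. There is also a preliminary issue you gloss over: $\det\rho(S)$ need not be finite, so in general no finite extension of $\SL_n$ contains $\rho(S)$. The paper handles this by first replacing $S$ with a dense-enough subgroup $\psi^{-1}(\Lambda_{l!})$ via the argument of \cite[Prop.~3.7]{BMRT}, checking that (a)--(c) persist; only then is $F=\det^{-1}(\psi(S))$ a finite extension of $\SL_n$.
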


\begin{proof}
(i).\ Suppose that $S$ is reductive and $HS$-cr
(note that $HS$ is reductive, because $H$ and $S$ are). Then $M$ is reductive,
since $C_H(S)$ is $H$-cr by \cite[Thm.~5.4(a)]{BMRT}.
Suppose that $K$ is not relatively $G$-cr with respect to $H$.
We show that $K$ is not relatively $G$-cr with respect to $M$.
Let $\Omega(K)$ be the class of optimal cocharacters for $K$ in $H$
and let $P(K)$ be the corresponding
optimal destabilizing R-parabolic
subgroup for $K$ with respect to $H$ (Definition \ref{defn:optpar}).
By Theorem~\ref{thm:optpar}(ii), since $S \subseteq N_G(H) \cap N_G(K)$,
we have $S \subseteq P(K)$,
and hence $S$ normalizes $P(K) \cap H$.
By \cite[Lem.~5.1]{BMRT} (applied to the reductive group $HS$), $S$ normalizes an R-Levi subgroup of $P(K) \cap H$,
and by \cite[Thm.~4.5(iv)]{GIT}, this subgroup is of the form $L_\lambda(H)$ for
a unique $\lambda \in \Omega(K)$.
But $S$ acts on $\Omega(K)$,
by Theorem \ref{thm:optpar}(ii),
so $S$ must fix $\lambda$, and we
have $S \subseteq L_\lambda = C_G(\lambda(k^*))$.
Hence $\lambda \in Y(M)$.  If $K$ is contained in
an R-Levi subgroup of $P_\lambda(M)$,
then $K\subseteq L_{u\cdot \lambda}(M)$ for some $u\in R_u(P_\lambda(M))$.  But then $u\in R_u(P_\lambda(H))$, so $K$ is contained in $L_{u\cdot \lambda}(H)$, an R-Levi subgroup of $M$, which is impossible.  We conclude that $K$
is not relatively $G$-cr with respect to $M$ either.

(ii).\
After embedding $G$ in some $\GL_m$, we may assume that $G=\GL_m$.
Let $E$ be the linear span of $K$ in $\Mat_m$.
Let ${\bf e}\in K^n$ for some $n$ be a basis for the associative algebra $E$.
Then it follows from
Theorem \ref{thm:relLieGcr}
that $K$ is relatively $G$-cr with respect to $H$ if and only if
$H\cdot{\bf e}$ is closed, and likewise if we replace $H$ by $M$.  Before we proceed further, we briefly give the main idea of the proof.  We want to apply \cite[Thm.~5.4(b)]{BMRT} to ${\mathbf e}$.  We cannot do this directly,
because $S$ does not centralize $K$ --- it only normalizes $K$ --- and hence ${\mathbf e}$ need not be centralized by $S$.  The point of the argument below, and of Lemma \ref{lem:Thetastab}, is to allow us to replace ${\mathbf e}$ with $\pi{(\mathbf e})$, which {\bf is} centralized by $S$;
here $\pi:(\Mat_m)^n\to (\Mat_m)^n/\!\!/F$ is the canonical projection, with $F$ to be defined below.

Since $S$ normalizes $K$, we
have a homomorphism $\varphi:S\to\GL(E)$
and $E^n$ is an $S$-stable subset of $(\Mat_m)^n$ under the diagonal action.
Define $\psi:S\to k^*$ by $\psi(s)= {\rm det}\,A(\varphi(s),{\mathbf e})$, where $A(g,{\mathbf e})$ is as in Eqn.\ (\ref{eqn:A(g)}).
If $\psi(S)$ is finite, then we define $F\le\GL_n$ by $F=\det^{-1}(\psi(S))$.
Now assume that $\psi(S)=k^*$. Let $\Lambda\le k^*$ be the subgroup of all roots of
unity and for a positive integer $l$ let $\Lambda_l\le k^*$ be the group of $l$th roots
of
unity. Since $\Lambda$ is dense in $k^*$ and the inverse image of a dense subgroup
under a surjective homomorphism of algebraic groups is dense, we have that the ascending
chain $\psi^{-1}(\Lambda_{l!})_{l\ge1}$ of subgroups of $S$ is dense in $S$. By similar
arguments as in \cite[Prop.~3.7]{BMRT}, we can now replace $S$ by some
$\psi^{-1}(\Lambda_{l!})$ without changing $M$ and such that assumptions (a), (b) and
(c) still hold. Now we are again in the situation that $\psi(S)$ is finite and again
we define $F\le\GL_n$ by $F=\det^{-1}(\varphi(S))$. Let $\pi:(\Mat_m)^n\to (\Mat_m)^n/\!\!/F$ be the
canonical projection.

Now assume that $K$ is relatively $G$-cr with respect to $H$.
Let $\lambda\in Y(M)$
such that
${\mathbf e}':= \underset{a\to 0}{\lim}\, \lambda(a)\cdot {\mathbf e}$ exists.
Since
$H\cdot {\mathbf e}$ is closed, we have ${\mathbf e}'\in H\cdot {\mathbf e}$. So
$\pi({\mathbf e}')=\underset{a\to 0}{\lim}\, \lambda(a)\cdot\pi({\mathbf e})\in H\cdot \pi({\mathbf
e})$. By our choice of $F$ we have that for every $s\in S$, $A(\varphi(s),{\mathbf
e})\in F$. So $\pi({\mathbf e})$ is $S$-fixed. Now $M\cdot \pi({\mathbf e})$ is closed
in $H\cdot \pi({\mathbf e})$, by \cite[Thm.~5.4(b)]{BMRT}, so $\pi({\mathbf e}')\in
M\cdot \pi({\mathbf e})$. But then ${\mathbf e}'\in M\cdot {\mathbf e}$, by
Lemma~\ref{lem:Thetastab} and \cite[Cor.~3.5(ii)]{GIT}.
So $M\cdot {\mathbf e}$ is closed and thus $K$ is
relatively $G$-completely reducible with respect to $M$.
\end{proof}

We get the following analogue of \cite[Cor.\ 3.21]{BMR}.

\begin{cor}
\label{cor:SnormalizesK}
Let $G$, $H$, $K$, $S$, and $M = C_H(S)^0$  be as in
Proposition \ref{prop:SnormalizesK}.
Suppose that $S$ is linearly reductive.
Then $K$ is relatively $G$-completely reducible with respect to $H$
if and only if it is relatively $G$-completely reducible with respect to $M$.
\end{cor}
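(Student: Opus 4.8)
The plan is to obtain the corollary by feeding the single hypothesis that $S$ is linearly reductive into the two halves of Proposition~\ref{prop:SnormalizesK}, each of which supplies one implication of the asserted equivalence.

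First I would set up the ``if'' direction (relative $G$-complete reducibility with respect to $M$ implies relative $G$-complete reducibility with respect to $H$) by means of Proposition~\ref{prop:SnormalizesK}(i), which requires $S$ to be reductive and $HS$-completely reducible. Reductivity of $S$ is immediate from linear reductivity, and $HS$ is reductive since $S$ normalizes the reductive group $H$. The one input needing a citation is that a linearly reductive subgroup of a reductive group is always $G$-completely reducible, applied to $S\subseteq HS$ (see, e.g., \cite{BMR}); granting this, part~(i) yields at once that $M=C_H(S)^0$ is reductive and the desired implication.

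For the reverse implication I would verify hypotheses (a)--(c) of Proposition~\ref{prop:SnormalizesK}(ii). Condition~(c) is already in hand from part~(i). For~(a), linear reductivity makes the adjoint module $\hh$ semisimple, so $\cc_\hh(S)=\hh^S$ is exactly the trivial isotypic summand and its complement --- hence $\hh/\cc_\hh(S)$ --- carries only nontrivial $S$-composition factors. For~(b), the equality $\cc_\hh(S)=\Lie C_H(S)$ follows from the smoothness of the fixed-point (centralizer) subgroup of a linearly reductive group (cf.\ \cite{BMRT}). Part~(ii) then delivers the implication from relative $G$-complete reducibility with respect to $H$ to that with respect to $M$, and combining the two directions finishes the proof.

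The argument is essentially bookkeeping once these inputs are recognised; the only non-formal ingredients are the two standard facts about linearly reductive groups used above --- that they are $G$-cr in any reductive overgroup (for part~(i)) and that their centralizers are smooth, so that $\Lie C_H(S)=\cc_\hh(S)$ (for condition~(b)). I expect the main point requiring care to be pinning down precise references for these two facts, since it is exactly their conjunction that makes the clean hypothesis ``$S$ linearly reductive'' strong enough to trigger both directions of Proposition~\ref{prop:SnormalizesK} simultaneously.
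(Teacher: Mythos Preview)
Your proposal is correct and follows essentially the same approach as the paper: both directions are obtained by verifying the hypotheses of Proposition~\ref{prop:SnormalizesK}(i) and (ii) respectively, using that a linearly reductive $S$ is $HS$-cr (the paper cites \cite[Lem.~2.6]{BMR}) and that conditions (a)--(c) hold automatically for linearly reductive $S$ (the paper cites \cite[Lem.~4.1, Prop.~10.1.5]{rich1} for (a) and (b)). Your reasoning for each condition is sound; only the precise references differ.
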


\begin{proof}
 Since $S$ is linearly reductive, $S$ is $HS$-cr \cite[Lem.\ 2.6]{BMR} and conditions (a)--(c) of Proposition \ref{prop:SnormalizesK}(ii) all hold
\cite[Lem.\ 4.1, Prop.\ 10.1.5]{rich1}.  The result now follows from Proposition \ref{prop:SnormalizesK}.
\end{proof}

\begin{rem}
 In \cite[Thm.~5.8]{BMR} and \cite{martin2}, there is a similar construction with the matrices $A(g,{\mathbf w})$ replaced by permutation matrices.  The construction above extends this idea.
\end{rem}

\section{Rationality questions}
\label{sec:rationality}

There is an obvious way to extend the notion of relative
$G$-complete reducibility
by considering non-algebraically closed fields.
Throughout this section, $k$ denotes any field
and we assume that $G$ is a reductive $k$-group.
Furthermore, we assume that $H$ is a reductive $k$-defined subgroup of $G$.
We let $k_s$ denote the separable closure of $k$,
and $\ovl k$ the algebraic closure of $k$.
We denote the Galois group ${\rm Gal}(k_s/k)={\rm Gal}(\ovl k/k)$ by $\Gamma$.
Algebraic groups and varieties will always be defined
over $\ovl k$ and points will always be $\ovl k$-defined points.
If $V$ is a $k$-variety and $k_1/k$ is an algebraic extension, then we denote the set of $k_1$-points of $V$ by $V(k_1)$.  We call elements of $V(k_s)$
\emph{separable points}.
Note that $\Gamma$ acts on $V=V(\ovl k)$. A closed subvariety $W$ of $V$ is defined over $k$
if and only if it contains a $\Gamma$-stable set of separable points of $V$ which is dense in $W$;
see \cite[Thm.~A.14.4]{Bo}. The set of $k$-defined cocharacters of a $k$-group $K$ is denoted $Y_k(K)$.  We say that a $G$-variety $V$ is defined over $k$ if both $V$ and the action of $G$ on $V$ are defined over $k$.

We begin with the definition of relative $G$-complete reducibility over $k$:
\begin{defn}
\label{defn:relativelyk}
Let $K$ be a subgroup of $G$.
We say that $K$ is
\emph{relatively $G$-completely reducible over $k$ with respect to $H$} if
for every $\lambda \in Y(H)$ such that $P_\lambda$ is $k$-defined and $K\subseteq P_\lambda$,
there exists $\mu \in Y(H)$ such that
$P_\lambda = P_\mu$, $L_\mu$ is $k$-defined and $K \subseteq L_\mu$.
\end{defn}

In order to deal with the definition of relative $G$-complete reducibility over $k$, we need some more detailed information about R-parabolic subgroups defined over $k$.

\begin{lem}\label{lem:k-defined}
Let $\lambda\in Y(H)$.
If $P_\lambda$ is $k$-defined, then $P_\lambda(H)$ is $k$-defined. The analogous
assertions hold for $L_\lambda$ and $R_u(P_\lambda)$.
\end{lem}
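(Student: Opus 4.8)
The plan is to treat the three assertions uniformly, recognizing each as the statement that the reduced intersection of a $k$-defined subgroup of $G$ (namely $P_\lambda$, $L_\lambda$, or $R_u(P_\lambda)$) with the $k$-defined subgroup $H$ is again $k$-defined. The \emph{scheme-theoretic} intersection of two $k$-defined closed subschemes of $G$ is automatically $k$-defined, since its ideal is the sum of the two $k$-defined ideals; so the entire difficulty --- which is genuine only when $k$ is imperfect --- is to show that this scheme-theoretic intersection is reduced, so that it coincides with the variety $P_\lambda\cap H=P_\lambda(H)$, and the latter then inherits the $k$-structure. First I would record that $P_\lambda(H)$, $L_\lambda(H)$ and $R_u(P_\lambda(H))$ are smooth, being respectively an R-parabolic subgroup, an R-Levi subgroup, and the unipotent radical of an R-parabolic subgroup of the reductive group $H$, so that each has dimension equal to that of its Lie algebra.

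The mechanism for proving reducedness is a tangent-space computation at the identity. Since $\lambda\in Y(H)$, the torus $\lambda(k^*)$ lies in $H$ and acts on $\gg$ and on the submodule $\hh$; as tori are linearly reductive we obtain weight decompositions $\gg=\bigoplus_i\gg_i$ and $\hh=\bigoplus_i\hh_i$ with $\hh_i=\hh\cap\gg_i$. By Lemma~\ref{lem:liealgebrasofRpars} we have $\pp_\lambda=\bigoplus_{i\ge0}\gg_i$, $\frakl_\lambda=\gg_0$ and $\Lie(R_u(P_\lambda))=\bigoplus_{i>0}\gg_i$, and the same description applied inside $H$ identifies $\Lie(P_\lambda(H))=\bigoplus_{i\ge0}\hh_i$, $\Lie(L_\lambda(H))=\hh_0$ and $\Lie(R_u(P_\lambda(H)))=\bigoplus_{i>0}\hh_i$. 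Comparing, $\pp_\lambda\cap\hh=\bigoplus_{i\ge0}\hh_i=\Lie(P_\lambda(H))$, and likewise in the other two cases. Now the Zariski tangent space at $1$ of the scheme-theoretic intersection $P_\lambda\cap H$ is $T_1P_\lambda\cap T_1H=\pp_\lambda\cap\hh$, which therefore has dimension $\dim P_\lambda(H)$. Since the reduced subscheme of $P_\lambda\cap H$ is precisely the variety $P_\lambda(H)$ of this same dimension, the scheme-theoretic intersection is smooth at $1$; being a group scheme, it is then smooth, hence reduced, everywhere, so it coincides with $P_\lambda(H)$. As it is $k$-defined and reduced, $P_\lambda(H)$ is $k$-defined. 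The cases of $L_\lambda$ and $R_u(P_\lambda)$ run identically, replacing $\pp_\lambda$ by $\frakl_\lambda$ or by $\Lie(R_u(P_\lambda))$.

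I expect the main obstacle to be exactly this reducedness issue over imperfect $k$: without it one cannot conclude that the reduced intersection descends, and indeed for general pairs of smooth $k$-subgroups the reduced intersection need not be $k$-defined. The weight-space computation is what rules this out here, by forcing the scheme-theoretic intersection to be smooth. Alternatively, once smoothness is established --- whence the separable points $(P_\lambda\cap H)(k_s)$ are dense and $\Gamma$-stable --- one could conclude via \cite[Thm.~A.14.4]{Bo}; but the tangent-space argument is the cleanest route, precisely because it supplies the density hypothesis that such a criterion requires.
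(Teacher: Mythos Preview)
Your proof is correct and follows essentially the same route as the paper: both reduce to verifying $\Lie(P_\lambda(H))=\Lie(P_\lambda)\cap\Lie(H)$ (and the analogues for $L_\lambda$, $R_u(P_\lambda)$) via the $\lambda$-weight decomposition, exactly as you do using Lemma~\ref{lem:liealgebrasofRpars}. The only difference is packaging: the paper invokes \cite[Prop.~12.1.5]{spr2} as a black box for the step ``Lie-algebra transversality $\Rightarrow$ intersection defined over $k$'', whereas you unpack that proposition's content via the scheme-theoretic intersection and smoothness at the identity.
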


\begin{proof}
Thanks to \cite[Prop.~12.1.5]{spr2},
$P_\lambda(H) = P_\lambda \cap H$ is $k$-defined
if $ \Lie P_\lambda(H) = \Lie P_\lambda \cap \Lie H$; similarly
for $L_\lambda$ and $R_u(P_\lambda)$.
The result now follows from
Lemma \ref{lem:liealgebrasofRpars} applied to $G$ and $H$.
\end{proof}

\begin{rem}
 It follows from Lemma \ref{lem:k-defined} and \cite[Lem.~2.5]{GIT} that if $\lambda\in Y_k(H)$, then $P_\lambda$, $L_\lambda$, $R_u(P_\lambda)$, $P_\lambda(H)$, $L_\lambda(H)$ and $R_u(P_\lambda(H))$ are $k$-defined.
\end{rem}

\begin{lem}\ 
\label{lem:R-parabolic}
Let $\lambda\in Y(H)$ and assume that $P_\lambda$ is $k$-defined. Then there
exists $\mu\in Y_k(H)$ such that $P_\lambda\subseteq P_\mu$ and $P_\lambda^0=P_\mu^0$.
\end{lem}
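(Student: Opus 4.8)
The plan is to exploit the Galois action. Since $P_\lambda$ is $k$-defined it is stable under $\Gamma=\mathrm{Gal}(k_s/k)$, and the construction $\nu\mapsto P_\nu$ is $\Gamma$-equivariant, so $P_{\sigma\cdot\lambda}=\sigma(P_\lambda)=P_\lambda$ for every $\sigma\in\Gamma$; that is, every Galois twist of $\lambda$ defines the same R-parabolic subgroup of $G$. I would then produce the required $\mu$ as a $\Gamma$-average of the twists $\sigma\cdot\lambda$.

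To make this average land in $Y_k(H)$ I first arrange that all the twists lie in a common $k$-defined torus of $H$. By Lemma~\ref{lem:k-defined} the subgroup $Q:=P_\lambda(H)$ is $k$-defined; its identity component $Q^0=P_\lambda(H^0)$ is a $k$-defined parabolic subgroup of the connected reductive group $H^0$, and by Borel--Tits such a parabolic has a Levi subgroup defined over $k$. I would upgrade this to a $k$-defined R-Levi subgroup $L$ of $Q$ and set $S:=Z(L)^0$, a $k$-defined torus of $H$. Using that $R_u(Q)=R_u(P_\lambda(H))$ acts simply transitively on the R-Levi subgroups of $Q$, I replace $\lambda$ by an $R_u(Q)$-conjugate (which changes neither $P_\lambda$ nor its $k$-definedness, since $R_u(Q)\subseteq P_\lambda$) so that $L_\lambda(H)=L$; then $\lambda\in Y(S)$. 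Because $L$ is $\Gamma$-stable we get $L_{\sigma\cdot\lambda}(H)=\sigma(L)=L$ for every $\sigma$, so in fact all twists $\sigma\cdot\lambda$ lie in $Y(S)$; in particular $\lambda$ is $k_s$-rational with finite Galois orbit. Now $\mu:=\sum_{\nu\in\Gamma\cdot\lambda}\nu\in Y(S)$ is a sum over a full $\Gamma$-orbit, hence $\Gamma$-fixed, so $\mu\in Y_k(S)\subseteq Y_k(H)$.

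It remains to verify $P_\lambda\subseteq P_\mu$ and $P_\lambda^0=P_\mu^0$; both are formal pairing computations once I fix an embedding $G\hookrightarrow\GL(W)$ and a maximal torus $T$ of $G$ with $S\subseteq T$. For the containment: in the weight-space description an element $g$ lies in $P_\nu$ exactly when $\langle\chi-\chi',\nu\rangle\ge0$ for every pair of $T$-weights $(\chi,\chi')$ for which $g$ has a nonzero $(\chi,\chi')$-block; since $g\in P_\lambda=P_{\sigma\cdot\lambda}$ gives this inequality for each $\sigma$, summing over the orbit gives it for $\mu$, whence $P_\lambda\subseteq P_\mu$. For the identity components I compare the signs of $\langle\alpha,\mu\rangle$ and $\langle\alpha,\lambda\rangle$ on the roots $\alpha$ of $G$ relative to $T$: the equalities $P_{\sigma\cdot\lambda}^0=P_\lambda^0$ force $\langle\alpha,\sigma\cdot\lambda\rangle\ge0\iff\langle\alpha,\lambda\rangle\ge0$ for all $\sigma$, and a short case analysis (applying this to $\pm\alpha$ when $\langle\alpha,\lambda\rangle=0$) shows $\mu$ has exactly the same sign pattern on every root as $\lambda$; hence $P_\mu^0=P_\lambda^0$.

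The main obstacle is the step producing the common $k$-defined torus $S$: everything downstream is a routine computation with pairings, but guaranteeing a $k$-defined R-Levi subgroup of the (possibly non-connected) $k$-defined R-parabolic $Q$ is where non-connectedness of $H$ and non-perfectness of $k$ could cause trouble. I would handle this by reducing to $H^0$, where a Levi decomposition over $k$ is available from Borel--Tits, and then transporting the resulting $k$-defined Levi to an R-Levi of $Q$. This is precisely the point that forces the conclusion to be stated as $P_\lambda\subseteq P_\mu$ together with $P_\lambda^0=P_\mu^0$, rather than the naive equality $P_\lambda=P_\mu$.
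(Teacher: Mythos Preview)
Your approach is essentially the same as the paper's: put $\lambda$ into a $k$-defined torus of $P_\lambda(H)$ by conjugating within $P_\lambda(H)$, then take the sum $\mu$ over the finite Galois orbit of $\lambda$ and verify the pairing conditions. The paper (following \cite[Lem.~2.5(ii)]{GIT}) carries this out with one simplification: rather than producing a $k$-defined R-Levi subgroup $L$ of $Q=P_\lambda(H)$ and using $S=Z(L)^0$, it simply takes a $k$-defined maximal torus $T$ of $P_\lambda(H)$ (which exists by \cite[Thm.~V.18.2(i)]{Bo}) and conjugates $\lambda$ into $Y(T)$ by an element of $P_\lambda(H)$. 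This sidesteps exactly the ``main obstacle'' you flag---no need to upgrade a $k$-defined Levi of $Q^0$ to a $k$-defined R-Levi of $Q$---while achieving the same outcome, since any $k$-defined torus containing $\lambda$ suffices for the averaging step. Your downstream verification of $P_\lambda\subseteq P_\mu$ and $P_\lambda^0=P_\mu^0$ via weight pairings matches the paper's argument.
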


\begin{proof}
This follows from the proof of \cite[Lem.~2.5(ii)]{GIT},
replacing the maximal torus of $P_\lambda$ with a maximal
torus of  $P_\lambda(H)$.
\end{proof}

\begin{rem}
It is not true that any $k$-defined R-parabolic subgroup of $G$
stems from a cocharacter defined over $k$;
see \cite[Rem.~2.4]{GIT}.

\end{rem}

The next lemma gives a version of Lemma~\ref{lem:relativelystrong}(i) over $k$.  Note that any two $k$-defined R-Levi subgroups of a $k$-defined R-parabolic subgroup are $R_u(P)(k)$-conjugate \cite[Lem.~2.5(iii)]{GIT}.

\begin{lem}
\label{lem:relativelystrongk}
Let $\lambda,\mu\in Y(H)$ such that $P_\lambda=P_\mu$ and
let $u$ be the element of $R_u(P_\lambda(H))$ such that
$uL_\lambda(H)u\inverse=L_\mu(H)$.
Then $uL_\lambda u\inverse=L_\mu$.
Furthermore, if $G$, $H$, $P_\lambda$, $L_\lambda$ and $L_\mu$ are $k$-defined,
then $u \in R_u(P_\lambda(H))(k)$.
\end{lem}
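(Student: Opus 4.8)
The first assertion, $uL_\lambda u\inverse = L_\mu$, is exactly Lemma~\ref{lem:relativelystrong}(i): the hypotheses there---namely $\lambda,\mu\in Y(H)$ with $P_\lambda=P_\mu$, together with $u\in R_u(P_\lambda(H))$ the unique element conjugating $L_\lambda(H)$ to $L_\mu(H)$---match ours verbatim, so the plan is simply to cite it and move on.

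For the rationality statement the plan is first to assemble the relevant $k$-defined objects and then to invoke $k$-rational conjugacy of R-Levi subgroups inside $H$. Since $P_\lambda$ is $k$-defined, its unipotent radical $R_u(P_\lambda)$ is $k$-defined (the unipotent radical of a $k$-defined R-parabolic is $k$-defined, as one sees by passing to the connected component $P_\lambda^0$). Applying Lemma~\ref{lem:k-defined} to $\lambda\in Y(H)$ then gives that $P_\lambda(H)$ and $R_u(P_\lambda(H))=R_u(P_\lambda)\cap H$ are $k$-defined, and applying it to the $k$-defined subgroups $L_\lambda$ and $L_\mu$ gives that $L_\lambda(H)=L_\lambda\cap H$ and $L_\mu(H)=L_\mu\cap H$ are $k$-defined. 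Note that $P_\mu=P_\lambda$, so there is only the one R-parabolic subgroup of $H$ in play.

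With these in hand, I would observe that $L_\lambda(H)$ and $L_\mu(H)$ are two $k$-defined R-Levi subgroups of the single $k$-defined R-parabolic subgroup $P_\lambda(H)$ of the reductive $k$-group $H$. By the $k$-rational conjugacy statement \cite[Lem.~2.5(iii)]{GIT} recalled just before the lemma (applied inside $H$), there is then an element $v\in R_u(P_\lambda(H))(k)$ with $vL_\lambda(H)v\inverse=L_\mu(H)$. On the other hand $R_u(P_\lambda(H))$ acts simply transitively on the R-Levi subgroups of $P_\lambda(H)$ (Definition~\ref{defn:rpars}), so there is only one element of $R_u(P_\lambda(H))$ conjugating $L_\lambda(H)$ to $L_\mu(H)$; since $u$ is by hypothesis exactly that element, I conclude $v=u$ and hence $u\in R_u(P_\lambda(H))(k)$.

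This is really a bookkeeping argument once the ingredients are lined up, so I do not expect a serious obstacle. The points that need care are: making sure the conjugacy result is applied to $P_\lambda(H)$ inside $H$ rather than to $P_\lambda$ inside $G$; checking that all of $P_\lambda(H)$, $R_u(P_\lambda(H))$, $L_\lambda(H)$ and $L_\mu(H)$ are genuinely $k$-defined, so that both $R_u(P_\lambda(H))(k)$ and the cited statement make sense (this is precisely what Lemma~\ref{lem:k-defined} delivers); and using simple transitivity to upgrade the abstract $k$-rational conjugator $v$ to the specific element $u$.
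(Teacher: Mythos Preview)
Your proof is correct. For the first part you cite Lemma~\ref{lem:relativelystrong}(i), exactly as the paper does. For the rationality statement you and the paper both invoke \cite[Lem.~2.5(iii)]{GIT} together with simple transitivity of the unipotent radical on R-Levi subgroups, but you apply these inside $H$ (to $P_\lambda(H)$, $L_\lambda(H)$, $L_\mu(H)$), whereas the paper applies them inside $G$ (to $P_\lambda$, $L_\lambda$, $L_\mu$) and then uses the identity $R_u(P_\lambda(H))(k)=R_u(P_\lambda(H))\cap R_u(P_\lambda)(k)$ to pull the conjugator back into $H$. Your route requires the extra step of checking via Lemma~\ref{lem:k-defined} that $L_\lambda(H)$ and $L_\mu(H)$ are $k$-defined, while the paper's route uses the $k$-definedness of $L_\lambda$ and $L_\mu$ directly from the hypotheses; either way the argument is short and the ingredients are the same.
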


\begin{proof}
The first part of this is Lemma \ref{lem:relativelystrong}.
The final assertion follows from \cite[Lem.\ 2.5(iii)]{GIT}
and the fact that $R_u(P_\lambda(H))(k)=R_u(P_\lambda(H))\cap R_u(P_\lambda)(k)$.
\end{proof}

\begin{cor}
\label{cor:relativelystrong}
Let $\lambda\in Y_k(H)$ and let $\mu\in Y(H)$ such that $P_\lambda=P_\mu$
and $L_\mu$ is $k$-defined.
Then there exists $\nu\in Y_k(H)$ such that $P_\lambda=P_\nu$ and $L_\mu=L_\nu$.
\end{cor}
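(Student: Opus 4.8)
The plan is to reduce the whole statement to producing a single conjugating element and checking that it is defined over $k$, after which the desired $\nu$ will be obtained as an $R_u(P_\lambda(H))(k)$-conjugate of $\lambda$. First I would record the rationality we get for free: since $\lambda\in Y_k(H)$, the remark following Lemma~\ref{lem:k-defined} shows that $P_\lambda$, $L_\lambda$, $R_u(P_\lambda)$ and their intersections $P_\lambda(H)$, $L_\lambda(H)$, $R_u(P_\lambda(H))$ with $H$ are all $k$-defined. Since $P_\lambda=P_\mu$, we have $P_\lambda(H)=P_\mu(H)$, so $L_\lambda(H)$ and $L_\mu(H)$ are both R-Levi subgroups of the common R-parabolic subgroup $P_\lambda(H)$.

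Next I would use the simply transitive action of $R_u(P_\lambda(H))$ on the R-Levi subgroups of $P_\lambda(H)$ to obtain the unique $u\in R_u(P_\lambda(H))$ with $uL_\lambda(H)u\inverse=L_\mu(H)$. The key input is then Lemma~\ref{lem:relativelystrongk}: its first part upgrades this to $uL_\lambda u\inverse=L_\mu$, and, crucially, its final assertion gives $u\in R_u(P_\lambda(H))(k)$. This is exactly the step that consumes all the $k$-rationality hypotheses: $G$ and $H$ are $k$-defined by the standing assumptions of the section, $P_\lambda$ and $L_\lambda$ are $k$-defined because $\lambda\in Y_k(H)$, and $L_\mu$ is $k$-defined by hypothesis, so all five subgroups whose $k$-definedness Lemma~\ref{lem:relativelystrongk} requires are indeed $k$-defined.

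Finally I would set $\nu:=u\cdot\lambda$ and verify the three required properties. Since $u\in R_u(P_\lambda(H))\subseteq H$ and $\lambda\in Y(H)$, we have $\nu\in Y(H)$; since $u$ is $k$-defined and $\lambda\in Y_k(H)$, the conjugate $\nu$ is again $k$-defined, so $\nu\in Y_k(H)$. Because $u\in P_\lambda$ normalizes $P_\lambda$, we get $P_\nu=P_{u\cdot\lambda}=uP_\lambda u\inverse=P_\lambda$, and the standard identity $L_{u\cdot\lambda}=uL_\lambda u\inverse$ together with the first part of Lemma~\ref{lem:relativelystrongk} yields $L_\nu=uL_\lambda u\inverse=L_\mu$, as required.

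I expect the only genuine content to be the $k$-rationality of the conjugating element $u$, and this is already packaged in the final assertion of Lemma~\ref{lem:relativelystrongk}; everything else is the bookkeeping of transporting $\lambda$ by $u$ and using that $R_u(P_\lambda)$-conjugation preserves $P_\lambda$. Thus the main (and essentially only) obstacle, namely arranging the conjugating element over $k$, is dispatched by the cited lemma, and the corollary follows at once.
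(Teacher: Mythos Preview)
Your proof is correct and follows exactly the same approach as the paper's: apply Lemma~\ref{lem:relativelystrongk} to obtain $u\in R_u(P_\lambda(H))(k)$ with $uL_\lambda u^{-1}=L_\mu$, and set $\nu=u\cdot\lambda$. The paper compresses the argument to a single sentence, but your expanded version correctly identifies why the hypotheses of Lemma~\ref{lem:relativelystrongk} are met and why $\nu$ has the required properties.
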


\begin{proof}
By Lemma~\ref{lem:relativelystrongk}, there exists $u\in R_u(P_\lambda(H))(k)$ such that
$L_{u\cdot\lambda}=uL_\lambda u\inverse=L_\mu$, so we can take $\nu=u\cdot\lambda$.
\end{proof}

We can now show that when discussing relative $G$-complete reducibility over $k$,
it suffices to consider R-parabolic subgroups of the form $P_\lambda$ with $\lambda \in Y_k(H)$,
rather than all $k$-defined R-parabolic subgroups stemming from a cocharacter of $H$.

\begin{lem}
Let $K$ be a subgroup of $G$.
Then $K$ is relatively $G$-completely reducible over $k$ with respect to $H$ if and only if
for every $\lambda \in Y_k(H)$ such that $K$ is contained in $P_\lambda$,
there exists $\mu \in Y_k(H)$ such that
$P_\lambda = P_\mu$ and $K \subseteq L_\mu$.
\end{lem}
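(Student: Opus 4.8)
The plan is to prove the two implications separately, the forward one being essentially a matter of restricting the class of cocharacters we consider, the reverse one being where the real work lies. For the forward implication, suppose $K$ is relatively $G$-completely reducible over $k$ with respect to $H$ in the sense of Definition~\ref{defn:relativelyk}, and let $\lambda\in Y_k(H)$ with $K\subseteq P_\lambda$. By the remark following Lemma~\ref{lem:k-defined}, $P_\lambda$ is $k$-defined, so Definition~\ref{defn:relativelyk} furnishes $\mu\in Y(H)$ with $P_\mu=P_\lambda$, $L_\mu$ $k$-defined and $K\subseteq L_\mu$. I would then apply Corollary~\ref{cor:relativelystrong} to $\lambda$ and $\mu$ to replace $\mu$ by some $\nu\in Y_k(H)$ with $P_\nu=P_\lambda$ and $L_\nu=L_\mu\supseteq K$; this $\nu$ witnesses the required condition.

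For the reverse implication, assume the stated condition on cocharacters in $Y_k(H)$, and let $\lambda\in Y(H)$ be such that $P_\lambda$ is $k$-defined and $K\subseteq P_\lambda$. First I would invoke Lemma~\ref{lem:R-parabolic} to produce $\nu\in Y_k(H)$ with $P_\lambda\subseteq P_\nu$ and $P_\lambda^0=P_\nu^0$; by Definition~\ref{defn:rpars} this gives $R_u(P_\lambda)=R_u(P_\nu)$. Since $K\subseteq P_\lambda\subseteq P_\nu$ and $\nu\in Y_k(H)$, the hypothesis yields $\sigma\in Y_k(H)$ with $P_\sigma=P_\nu$ and $K\subseteq L_\sigma$, with $L_\sigma$ $k$-defined. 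The key preliminary observation is that $\sigma(k^*)\subseteq P_\lambda$: indeed $\sigma(k^*)$ is a connected torus contained in $L_\sigma^0$, which is a Levi subgroup of $P_\sigma^0=P_\lambda^0\subseteq P_\lambda$; hence $\sigma(k^*)\subseteq P_\lambda\cap H=P_\lambda(H)$.

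The heart of the argument is then to manufacture a single cocharacter $\mu\in Y(H)$ with $P_\mu=P_\lambda$ whose Levi is $L_\sigma\cap P_\lambda$. Since $\sigma(k^*)\subseteq P_\lambda(H)$, it lies in some R-Levi subgroup of the R-parabolic $P_\lambda(H)$ of $H$; as these R-Levis are exactly the $L_{u\cdot\lambda}(H)$ for $u\in R_u(P_\lambda(H))$ (Lemma~\ref{lem:relativelystrong} together with the simple transitivity of the $R_u(P_\lambda(H))$-action), I can choose such $u$ and set $\mu_0=u\cdot\lambda\in Y(H)$, so that $P_{\mu_0}=P_\lambda$ and $\sigma(k^*)\subseteq L_{\mu_0}$. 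Then $\mu_0(k^*)$ is central in $L_{\mu_0}$ and so commutes with $\sigma(k^*)$, while $L_{\mu_0}\subseteq P_{\mu_0}=P_\lambda\subseteq P_\nu=P_\sigma$. Setting $\mu=N\mu_0+\sigma$ for $N\gg 0$, I would use \cite[Lem.~6.2]{BMR} together with the equality $P_\sigma(L_{\mu_0})=L_{\mu_0}$ (which holds because $L_{\mu_0}\subseteq P_\sigma$) to conclude $P_\mu=L_{\mu_0}\ltimes R_u(P_{\mu_0})=P_\lambda$, and that $L_\mu=L_{\mu_0}\cap L_\sigma$. A dimension count—comparing $L_\mu^0$ and $L_\sigma^0$, both Levi subgroups of $P_\lambda^0$—together with the fact that the fixed-point group of a torus acting on a unipotent group is connected, identifies $L_{\mu_0}\cap L_\sigma$ with $L_\sigma\cap P_\lambda$. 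Since $P_\lambda$ and $L_\sigma$ are $k$-defined, $L_\mu=L_\sigma\cap P_\lambda$ is $k$-defined, and it contains $K$; thus $\mu$ witnesses Definition~\ref{defn:relativelyk}.

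I expect the main obstacle to be exactly the non-connectedness of $G$, which forces the containment $P_\lambda\subseteq P_\nu$ coming from Lemma~\ref{lem:R-parabolic} to be proper in general, so that $L_\sigma$ is an R-Levi of the strictly larger $P_\nu$ rather than of $P_\lambda$. Extracting from it a genuine $k$-defined R-Levi subgroup of $P_\lambda$ that still arises from a cocharacter of $H$—rather than merely one of $G$—is the delicate point, and this is precisely what the commuting-conjugate construction $\mu=N\mu_0+\sigma$ is designed to handle.
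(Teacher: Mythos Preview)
Your argument is correct, and the forward implication matches the paper's exactly. For the reverse implication you take a genuinely different route. The paper (deferring to the proof of \cite[Lem.~5.2]{GIT}) first conjugates the given cocharacter $\lambda\in Y(H)$ by an element of $P_\lambda(H)$ into a $k$-defined maximal torus $T$ of $P_\lambda(H)$, and applies Lemma~\ref{lem:R-parabolic} with $H$ replaced by $T$; this ensures the auxiliary $k$-defined cocharacter $\nu$ \emph{commutes with $\lambda$ from the outset}, whence $L_\lambda=L_\nu\cap P_\lambda$. After using the hypothesis to get $\sigma\in Y_k(H)$ with $P_\sigma=P_\nu$ and $K\subseteq L_\sigma$, a single $R_u(P_\lambda(H))$-conjugate $u\cdot\lambda$ then already satisfies $L_{u\cdot\lambda}=L_\sigma\cap P_\lambda\supseteq K$, and this is $k$-defined because its identity component equals $L_\sigma^0$. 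Your construction of $\mu=N\mu_0+\sigma$ arrives at the same Levi $L_\sigma\cap P_\lambda$, but the extra work---finding $u$ so that $\sigma$ and $\mu_0=u\cdot\lambda$ commute, then forming a linear combination and invoking \cite[Lem.~6.2]{BMR}---is exactly what the preliminary conjugation-into-$T$ step sidesteps. Two minor points: the identity $L_{\mu_0}\cap L_\sigma=L_\sigma\cap P_\lambda$ follows more directly from $R_u(P_{\mu_0})=R_u(P_\sigma)$ and $L_\sigma\cap R_u(P_\sigma)=\{1\}$ than from connectedness of torus fixed points; and the assertion that $L_\sigma\cap P_\lambda$ is $k$-defined because both factors are is not automatic in general, though it holds here since $L_\sigma^0\subseteq P_\lambda^0$ forces the Lie algebras to intersect correctly.
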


\begin{proof}
 This follows from the proof of \cite[Lem.~5.2]{GIT}, using Corollary \ref{cor:relativelystrong} in place of \cite[Cor.~2.6]{GIT}.
\end{proof}

In order to generalize Theorem \ref{thm:mainthm} to arbitrary fields,
we need a notion of a ``closed orbit'' for a group $M(k)$ of $k$-points
of a reductive $k$-group $M$ acting on a $k$-variety.
As we shall show, the correct notion for us is given by the
following definition, see \cite[Def.~3.8]{GIT}:

\begin{defn}\label{defn:cocharclosed}
Let $M$ be a reductive $k$-group and let $V$ be an $M$-variety defined over $k$.
Let $v \in V$. We say that the $M(k)$-orbit $M(k)\cdot v$ is
\emph{cocharacter-closed over $k$} if for
any $\lambda \in Y_k(M)$ such that
$v':= \lim_{a\to 0}\lambda(a)\cdot v$ exists, $v'$ is $M(k)$-conjugate
to $v$.
Note that it follows from the Hilbert-Mumford Theorem that
$M\cdot v$ is closed if and only if $M(\ovl{k})\cdot v$
is cocharacter-closed over $\ovl{k}$.
\end{defn}

For the proof of Theorem~\ref{thm:orbitcritk} we need
the following two extensions of Theorem \ref{thm:Ruconj}
to non-algebraically closed fields. The first result is
\cite[Thm.~3.3]{GIT}. Here we require the field to be perfect.

\begin{thm}
\label{thm:Ruconjk}
Suppose $k$ is a perfect field.
Let $M$ be a reductive $k$-group and let
$V$ be an $M$-variety defined over $k$.
Let $v\in V$ and let $\lambda\in Y_k(M)$ such
that $v':=\underset{a\to 0}{\lim} \lambda(a)\cdot v$
exists and is $M(k)$-conjugate to $v$.
Then $v'$ is $R_u(P_\lambda(M))(k)$-conjugate to $v$.
\end{thm}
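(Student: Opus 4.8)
The plan is to deduce this $k$-rational statement from its algebraically closed counterpart, Theorem~\ref{thm:Ruconj}, by a transporter-and-descent argument in which perfectness of $k$ supplies the rationality. Write $P=P_\lambda(M)$ and $U=R_u(P)$. Since $\lambda\in Y_k(M)$, the groups $P$, $U$ and $L_\lambda(M)$ are all defined over $k$ by Lemma~\ref{lem:k-defined} and the remark following it. I will treat the case $v\in V(k)$, which is the one needed in the applications (there one starts from a $k$-rational tuple); then, as $\lambda$ is defined over $k$, the morphism $a\mapsto\lambda(a)\cdot v$ and its extension to $\mathbb{A}^1$ are $k$-defined, so $v'=\lim_{a\to 0}\lambda(a)\cdot v$ automatically lies in $V(k)$.

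The first step is to repackage the conclusion as a single rationality question. Put
\[ T=\{u\in U : u\cdot v=v'\}. \]
By Theorem~\ref{thm:Ruconj} (whose hypothesis is met, since $M(k)$-conjugacy implies $M$-conjugacy over $\ovl{k}$) the set $T$ is non-empty, and by Lemma~\ref{lem:Ruconj} an element $u$ lies in $T$ exactly when $u^{-1}\cdot\lambda$ centralizes $v$. Fixing $u_0\in T$ one checks that $T=u_0\,\Stab_U(v)$, so $T$ is a torsor under the stabilizer $S:=\Stab_U(v)$. Because $U$ and the action are defined over $k$ and $v,v'\in V(k)$, the variety $T$ is stable under $\Gamma$; as $k$ is perfect, $T$ and $S$ are therefore defined over $k$. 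A point $u\in T(k)$ is precisely an element of $R_u(P_\lambda(M))(k)$ carrying $v$ to $v'$, so it remains to show $T(k)\neq\varnothing$, i.e.\ that this $k$-defined $S$-torsor is trivial.

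This triviality is where perfectness does the real work, and it is the main obstacle. Over a perfect field $U=R_u(P)$ is a connected split unipotent group, hence admits a $k$-defined filtration $U=U_0\supset U_1\supset\cdots\supset U_r=\{1\}$ by normal $k$-subgroups with $U_i/U_{i+1}\cong\mathbb{G}_a$, and $H^1(k,\mathbb{G}_a)=0$ by additive Hilbert~90. I would solve $u\cdot v=v'$ layer by layer along this filtration, lifting a solution modulo $U_{i+1}$ to one modulo $U_i$, the obstruction at each stage being a class built from $\mathbb{G}_a$ and so vanishing. The subtlety is that $S=\Stab_U(v)$ need not be smooth or connected (an \'etale factor such as $\mathbb{Z}/p$ can occur in characteristic $p$), so triviality of $H^1(k,S)$ is not formal; to circumvent this I would exploit that $v'$ is a $\lambda$-fixed point, since $\lambda(a)\cdot v'=v'$ for all $a$, together with the fact that $\lambda$ acts on $U$ with strictly positive weights and contracts it to $1$. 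This grading makes the successive layers $\lambda$-equivariant and lets the contraction select rational solutions, which is the precise mechanism by which the perfect-field hypothesis is used. This is exactly \cite[Thm.~3.3]{GIT}; in the paper one simply invokes that result, and the sketch above indicates how I would reconstruct its proof.
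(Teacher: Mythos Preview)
You are correct that the paper does not prove this statement at all: the sentence introducing Theorem~\ref{thm:Ruconjk} says only ``The first result is \cite[Thm.~3.3]{GIT}'', and no argument is given. So there is nothing to compare against in this paper, and your closing remark acknowledging this is accurate.

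Regarding your attempted reconstruction of the argument from \cite{GIT}, two points deserve comment. First, you restrict to $v\in V(k)$, but the theorem as stated imposes no such hypothesis, and the application in Theorem~\ref{thm:orbitcritk}(i) takes $\tuple{k}\in G^n$ rather than $G^n(k)$. Your Galois-stability argument for $T$ genuinely uses $v,v'\in V(k)$, so as written your sketch does not cover the full statement. Second, you correctly identify the real difficulty: the transporter $T$ is a torsor under $S=\Stab_U(v)$, and $S$ need not be smooth or connected, so vanishing of $H^1$ is not automatic even though $U$ is split unipotent over a perfect field. Your proposed fix via the contracting $\lambda$-action on $U$ is the right instinct, but you do not actually carry it out, so the sketch stops short of a proof at precisely the crux. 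In short, your outline is a reasonable roadmap with an honest admission of where the hard step lies, but it is not a complete argument, and the paper itself simply defers to \cite{GIT} rather than supplying one.
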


The second result is \cite[Thm.~3.10]{GIT}.
Here we require $M$ to be connected and the two assertions
need to be quantified over all $k$-defined cocharacters from $M$.

\begin{thm}\label{thm:git1.4}
Let $M$ be a connected reductive $k$-group and let
$V$ be an $M$-variety defined over $k$.
Let $v\in V$. Then the following are equivalent:
\begin{enumerate}[{\rm (i)}]
\item $M(k)\cdot v$ is cocharacter-closed over $k$;
\item for all $\lambda \in Y_k(M)$
such that $v':=\underset{a\to 0}{\lim} \lambda(a)\cdot v$ exists,
$v'$ is $R_u(P_\lambda(M))(k)$-conjugate to $v$.
\end{enumerate}
\end{thm}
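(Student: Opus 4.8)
The plan is to prove the two implications separately, with essentially all the work in $(i)\Rightarrow(ii)$. The implication $(ii)\Rightarrow(i)$ is immediate: since $R_u(P_\lambda(M))(k)\subseteq M(k)$, any $R_u(P_\lambda(M))(k)$-conjugacy of $v'$ and $v$ is in particular an $M(k)$-conjugacy, so $(ii)$ forces the defining condition of cocharacter-closedness over $k$ from Definition~\ref{defn:cocharclosed}. The content is therefore the converse, which asserts that under $(i)$ the $M(k)$-conjugacy of $v':=\lim_{a\to 0}\lambda(a)\cdot v$ with $v$, guaranteed for each relevant $\lambda$, can always be sharpened to conjugacy by a $k$-point of $R_u(P_\lambda(M))$.

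So fix $\lambda\in Y_k(M)$ such that $v':=\lim_{a\to 0}\lambda(a)\cdot v$ exists, and write $R:=R_u(P_\lambda(M))$; by Lemma~\ref{lem:k-defined} and the remark following it, $R$, $P_\lambda(M)$ and $L_\lambda(M)$ are all $k$-defined. By $(i)$, $v'$ is $M(k)$-conjugate, hence $M(\ovl{k})$-conjugate, to $v$, so Theorem~\ref{thm:Ruconj} applied to the $M$-variety $V$ already yields that $v'$ is $R(\ovl{k})$-conjugate to $v$. Thus the transporter $T:=\{u\in R \mid u\cdot v=v'\}$ is a $k$-closed subvariety of $R$ which is nonempty over $\ovl{k}$, a torsor under the stabilizer $C_R(v)$, and the task reduces to producing a $k$-point of $T$. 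When $k$ is perfect this $k$-point is supplied directly by Theorem~\ref{thm:Ruconjk}, so the entire difficulty is concentrated in the case of an imperfect field.

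For general $k$ I would exploit the structure of $R$: since $M$ is connected reductive and $\lambda$ is $k$-rational, $R$ is a connected $k$-split unipotent group, admitting a $k$-defined filtration by normal subgroups whose successive quotients are $k$-isomorphic to $\G_a$. One is tempted to find a transporter element by d\'evissage along this filtration, using $H^1(k,\G_a)=0$. The main obstacle is that in positive characteristic the orbit map $R\to R\cdot v$ need not be separable, equivalently $C_R(v)$ need not be smooth, and a naive d\'evissage then only produces a transporter element over the perfect closure $k^{1/p^\infty}$ rather than over $k$; indeed over an imperfect field the bare transporter $T$ can genuinely fail to have a $k$-point. Overcoming this is where cocharacter-closedness over $k$ must be used globally, over all of $Y_k(M)$ and not merely for the single $\lambda$ at hand. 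The route I would follow, as in \cite{GIT}, is to attach to the instability datum a $k$-rational optimal class of cocharacters together with a $k$-defined optimal destabilizing R-parabolic subgroup via the Kempf--Rousseau--Hesselink machinery over $k$; running the argument contrapositively, a failure of $R(k)$-conjugacy would feed an optimal $k$-rational $\mu$ whose limit $\lim_{a\to 0}\mu(a)\cdot v$ exists but is not $M(k)$-conjugate to $v$, contradicting $(i)$. Connectedness of $M$ enters precisely to make this optimality theory over $k$ apply, and carrying the $k$-rationality through the inseparability in this optimal step is the crux of the proof.
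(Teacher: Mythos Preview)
The paper does not prove this theorem: it is quoted as \cite[Thm.~3.10]{GIT}, so there is no proof in the present paper to compare your proposal against.

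That said, let me comment on your sketch. Your treatment of $(ii)\Rightarrow(i)$ is correct, and your reduction of the perfect-field case of $(i)\Rightarrow(ii)$ to Theorem~\ref{thm:Ruconjk} is exactly right. The gap is in your handling of imperfect $k$. You propose to run the Kempf--Rousseau--Hesselink machinery contrapositively: a failure of $R_u(P_\lambda(M))(k)$-conjugacy should produce an optimal $k$-rational $\mu$ whose limit is not $M(k)$-conjugate to $v$. But the optimality formalism of \cite[Sec.~4]{GIT} takes as input a closed $M$-stable subvariety $S$ that meets $\overline{M\cdot v}$ and is disjoint from $M\cdot v$. Under hypothesis $(i)$, the limit $v'$ is $M(k)$-conjugate to $v$, hence lies in $M\cdot v$ already; so there is no $S$-instability at all in the direction of $\lambda$, and nothing for the optimal class to detect. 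Your final sentence concedes that ``carrying the $k$-rationality through the inseparability in this optimal step is the crux of the proof'', but as stated the optimal step does not even get off the ground.

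A more promising line is the one you touched on and then abandoned: from $(i)$ you have $v'=m\cdot v$ for some $m\in M(k)$, and since $\lambda$ fixes $v'$ one checks that $\mu:=m^{-1}\cdot\lambda\in Y_k(M)$ fixes $v$. The problem is then to replace $m$ by an element of $P_\lambda(M)(k)$, after which the $k$-defined Levi decomposition of $P_\lambda(M)$ (available because $M$ is connected and $\lambda$ is $k$-rational) yields the desired $u\in R_u(P_\lambda(M))(k)$. Connectedness of $M$ is used precisely to control the relationship between the two $k$-parabolics $P_\lambda(M)$ and $P_\mu(M)=m^{-1}P_\lambda(M)m$; this is where the real work of \cite[Thm.~3.10]{GIT} lies, and your sketch does not supply it.
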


Using the preceding discussion, we can now extend parts of
Theorem \ref{thm:orbitcrit} to non-algebraically closed fields.
The final assertion in part (iii) is the desired extension of
Theorem \ref{thm:mainthm} to arbitrary fields.
Note that this equivalence also generalizes \cite[Thm.~5.9]{GIT}.

\begin{thm} Let $H$ be a reductive $k$-defined subgroup of $G$.
\label{thm:orbitcritk}
\begin{enumerate}[{\rm (i)}]
\item Let $n\in {\mathbb N}$, let $\tuple{k}\in G^n$ and let
$\lambda\in Y_k(H)$ such that
$\tuple{m}=\lim_{a\ra 0}\lambda(a)\cdot \tuple{k}$ exists.
Then the following are equivalent:
\begin{enumerate}[{\rm (a)}]
\item $\tuple{m}$ is $R_u(P_\lambda(H))(k)$-conjugate to $\tuple{k}$;
\item there exists $\mu \in Y_k(H)$ such that
$P_\lambda = P_\mu$ and $\mu(k^*)$ fixes $\tuple{k}$.
\end{enumerate}
   If $k$ is perfect, then (a) and (b) are also equivalent to the following:
\begin{enumerate}[{\rm (c)}]
\item $\tuple{m}$ is $H(k)$-conjugate to $\tuple{k}$.
\end{enumerate}
\item Let $K$ be a subgroup of $G$ and let $\lambda\in Y_k(H)$.
Suppose $K\subseteq P_\lambda$ and set $M= c_\lambda(K)$.
Then the following are equivalent:
\begin{enumerate}[{\rm (a)}]
\item $M$ is $R_u(P_\lambda(H))(k)$-conjugate to $K$;
\item $K \subseteq L_\mu$ for some
$\mu\in Y_k(H)$ such that $P_\lambda = P_\mu$.
\end{enumerate}
\item Let $K$, $\lambda$ and $M$ be as in (ii) and let $\tuple{k}\in K^n$
be a generic tuple of $K$.
    Then the assertions in (i) are equivalent to those in (ii).
    Furthermore, $K$ is relatively $G$-completely reducible  over $k$
with respect to $H$ if and only if $H^0(k)\cdot\tuple{k}$ is
cocharacter-closed over $k$.
\end{enumerate}
\end{thm}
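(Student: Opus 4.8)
The plan is to follow the template of the absolute rationality statement \cite[Thm.~5.9]{GIT}: first establish the cocharacter-level equivalences in (i) and (ii), then transfer them to each other through a generic tuple to get (iii), and finally feed the resulting criterion into Theorem~\ref{thm:git1.4}. A few identifications make the passage between $H$ and $H^0$ harmless: a cocharacter of $H$ has connected image, so $Y_k(H^0)=Y_k(H)$; and $R_u(P_\lambda(H))$ is connected, hence contained in $H^0$, so $R_u(P_\lambda(H))=R_u(P_\lambda(H^0))$. Moreover, for $\lambda\in Y_k(H)$ the subgroups $P_\lambda$, $L_\lambda$, $P_\lambda(H)$, $L_\lambda(H)$ and $R_u(P_\lambda(H))$ are all $k$-defined, by Lemma~\ref{lem:k-defined}. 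Recall also that $H^0$ is a connected reductive $k$-group, so Theorem~\ref{thm:git1.4} will apply to it acting on the $k$-variety $G^n$.

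The engine for both (i) and (ii) is one computation. Suppose $\mu\in Y_k(H)$ with $P_\mu=P_\lambda$. By Lemma~\ref{lem:relativelystrongk} there is a unique $u\in R_u(P_\lambda(H))$ with $L_\mu=uL_\lambda u\inverse$, and since $L_\lambda$ and $L_\mu$ are $k$-defined this $u$ lies in $R_u(P_\lambda(H))(k)$. If $g\in L_\mu=uL_\lambda u\inverse$ then $u\inverse gu\in L_\lambda$, so, using that $c_\lambda$ is a homomorphism with kernel $R_u(P_\lambda)$ (whence $c_\lambda(u)=1$), I obtain $c_\lambda(g)=c_\lambda(u)(u\inverse gu)c_\lambda(u)\inverse=u\inverse gu$. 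Applied coordinatewise to a tuple in $L_\mu^n$, or elementwise to a subgroup of $L_\mu$, this says $c_\lambda$ acts there simply as conjugation by $u\inverse$. This yields (i)(b)$\Rightarrow$(i)(a) (take $g=k_i$, so $\tuple m=c_\lambda(\tuple k)=u\inverse\cdot\tuple k$) and (ii)(b)$\Rightarrow$(ii)(a) (so $M=c_\lambda(K)=u\inverse Ku$). For the converses I run Lemma~\ref{lem:Ruconj} in reverse: if $\tuple m=w\cdot\tuple k$ with $w\in R_u(P_\lambda(H))(k)$, then $w\inverse\cdot\lambda$ fixes $\tuple k$, and $\mu:=w\inverse\cdot\lambda\in Y_k(H)$ has $P_\mu=P_\lambda$, giving (i)(a)$\Rightarrow$(i)(b); and if $M=uKu\inverse$ then $M\subseteq L_\lambda$ forces $K\subseteq u\inverse L_\lambda u=L_{u\inverse\cdot\lambda}$, giving (ii)(a)$\Rightarrow$(ii)(b). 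Finally, for the perfect-field clause, (i)(a)$\Rightarrow$(i)(c) is trivial since $R_u(P_\lambda(H))(k)\subseteq H(k)$, while (i)(c)$\Rightarrow$(i)(a) is precisely Theorem~\ref{thm:Ruconjk} applied to the $H$-variety $G^n$; this is the only point where perfectness enters.

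For (iii) I take $\tuple k\in K^n$ a generic tuple of $K$, write $K'$ for the algebraic group it generates, and set $\tuple m=c_\lambda(\tuple k)$, which is a generic tuple of $M=c_\lambda(K)$ by Lemma~\ref{lem:generictuple}(iii). Rather than match (i)(a) with (ii)(a) head on, I compare the reformulations (b). Condition (i)(b) asserts that some $\mu\in Y_k(H)$ with $P_\mu=P_\lambda$ fixes $\tuple k$, i.e.\ $K'\subseteq L_\mu$; by Lemma~\ref{lem:generictuple}(ii), $K'$ and $K$ lie in exactly the same R-Levi subgroups of $G$, so this is equivalent to $K\subseteq L_\mu$, which is (ii)(b). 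Together with the equivalences from (i) and (ii) this shows all of (i)(a)--(c) and (ii)(a),(b) coincide.

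It remains to deduce the final equivalence. Applying Theorem~\ref{thm:git1.4} to $H^0$ acting on $G^n$ at the point $\tuple k$ gives: $H^0(k)\cdot\tuple k$ is cocharacter-closed over $k$ iff for every $\lambda\in Y_k(H^0)=Y_k(H)$ for which $\lim_{a\to 0}\lambda(a)\cdot\tuple k$ exists, this limit is $R_u(P_\lambda(H^0))(k)=R_u(P_\lambda(H))(k)$-conjugate to $\tuple k$. Here ``the limit exists'' means $K'\subseteq P_\lambda$, equivalently $K\subseteq P_\lambda$ by Lemma~\ref{lem:generictuple}(ii), and for such $\lambda$ the stated conjugacy is exactly (i)(a), hence by (iii) exactly (ii)(b), namely $K\subseteq L_\mu$ for some $\mu\in Y_k(H)$ with $P_\mu=P_\lambda$. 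Thus cocharacter-closedness over $k$ rephrases as: for each $\lambda\in Y_k(H)$ with $K\subseteq P_\lambda$ there exists $\mu\in Y_k(H)$ with $P_\mu=P_\lambda$ and $K\subseteq L_\mu$ — which, by the (unlabelled) Lemma preceding Definition~\ref{defn:cocharclosed}, is precisely relative $G$-complete reducibility of $K$ over $k$ with respect to $H$. I expect the only real difficulty to be the $k$-rationality bookkeeping: ensuring at every step that the conjugating element can be chosen in $R_u(P_\lambda(H))(k)$ rather than merely over $\ovl k$, which hinges on the $k$-definedness supplied by Lemma~\ref{lem:k-defined} feeding Lemma~\ref{lem:relativelystrongk}, and on using the rational refinement Theorem~\ref{thm:Ruconjk} of Theorem~\ref{thm:Ruconj} in the perfect-field clause; the one genuinely non-formal input is Theorem~\ref{thm:git1.4} itself, which bridges orbit-closedness and the cocharacter-by-cocharacter conjugacy condition.
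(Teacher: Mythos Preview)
Your proof is correct and follows essentially the same approach as the paper's: the equivalences in (i) and (ii) via Lemmas~\ref{lem:Ruconj} and~\ref{lem:relativelystrongk}, the perfect-field clause via Theorem~\ref{thm:Ruconjk}, the bridge between (i)(b) and (ii)(b) via Lemma~\ref{lem:generictuple}(ii), and the final assertion via Theorem~\ref{thm:git1.4} after reducing to connected $H$. Your write-up is more explicit than the paper's terse proof (which just names the lemmas), in particular your computation that $c_\lambda$ acts on $L_\mu$ as conjugation by $u^{-1}$ is a nice way to see (b)$\Rightarrow$(a), though one could equally just invoke Lemma~\ref{lem:Ruconj} directly once $u\cdot\lambda$ is known to centralize $\tuple{k}$.
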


\begin{proof}
(i) and (ii).
The equivalence of (a) and (b) in both cases follows from Lemmas~\ref{lem:Ruconj} and \ref{lem:relativelystrongk}.  The equivalence of part (a) and part (c) in (i)
follows from Theorem \ref{thm:Ruconjk}.

(iii). The equivalence of (i)(b) and (ii)(b) follows from Lemma \ref{lem:generictuple}(ii),
so (i) and (ii) are equivalent.
For the final assertion of (iii) we can assume $H$ is connected, by Remark~\ref{rem:relativelystrong}(i).  
Now we use Theorem~\ref{thm:git1.4}
and the equivalence of (i) and (ii).
\end{proof}

The following result extends \cite[Thm.~5.11]{GIT}
to our setting of relative $G$-complete reducibility.
The proof is completely analogous to the proof given in \emph{loc.~cit}.

\begin{thm}
\label{thm:serresquestion}
Suppose $k_1/k$ is a separable extension of fields.
Let $K$ be a $k$-defined subgroup of $G$.
If $K$ is relatively $G$-completely reducible over $k_1$ with respect to $H$, then
$K$ is relatively $G$-completely reducible over $k$ with respect to $H$.
\end{thm}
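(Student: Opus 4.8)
The plan is to reduce the statement to a descent property for cocharacter-closed orbits, which is exactly what the rationality machinery of \cite{GIT} supplies, and then to invoke that machinery in the relative setting; in effect I would mirror the proof of \cite[Thm.~5.11]{GIT}, substituting our relative orbit criterion for \cite[Thm.~5.9]{GIT} and letting $H^0$ (rather than $G$) be the acting group. The bridge is Theorem~\ref{thm:orbitcritk}(iii): relative $G$-complete reducibility over a field $F\in\{k,k_1\}$ with respect to $H$ is equivalent to cocharacter-closedness over $F$ of the orbit $H^0(F)\cdot\tuple{k}$ in $G^n$, where $\tuple{k}\in K^n$ is a generic tuple of $K$. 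Since $K$ is $k$-defined it is also $k_1$-defined, and the connected reductive group $H^0$ is defined over $k$ and hence over $k_1$; fixing a single $k$-defined embedding $G\hookrightarrow\GL_m$ and a single generic tuple $\tuple{k}$ lets us apply Theorem~\ref{thm:orbitcritk}(iii) over both fields with the same data, so that the reduction to connected $H$ is built in through the passage to $H^0$.

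First I would choose $\tuple{k}$ to be separable, i.e.\ $\tuple{k}\in K(k_s)^n$; this is possible because the $k$-defined group $K$ is geometrically reduced, hence smooth, so its separable points are dense in $K$ and therefore span the associative algebra generated by $K$. Next, assuming $K$ is relatively $G$-cr over $k_1$ with respect to $H$, Theorem~\ref{thm:orbitcritk}(iii) applied over $k_1$ shows that $H^0(k_1)\cdot\tuple{k}$ is cocharacter-closed over $k_1$. I would then apply the separability descent for cocharacter-closed orbits from \cite{GIT} — the result that drives the proof of \cite[Thm.~5.11]{GIT} — to the connected reductive $k$-group $H^0$ acting on the $k$-variety $G^n$ and the point $\tuple{k}$: because $k_1/k$ is separable, cocharacter-closedness over $k_1$ forces cocharacter-closedness over $k$, so $H^0(k)\cdot\tuple{k}$ is cocharacter-closed over $k$. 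Finally, Theorem~\ref{thm:orbitcritk}(iii) over $k$ translates this back into the assertion that $K$ is relatively $G$-cr over $k$ with respect to $H$, which completes the argument.

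The content is thus concentrated entirely in the cited descent step, and the main thing to verify is that this step genuinely applies here. Two features make it routine rather than delicate. The descent theorem in \cite{GIT} is formulated for an arbitrary reductive $k$-group acting on an arbitrary $k$-variety, so nothing changes when the acting group is the proper subgroup $H^0$ instead of $G$ itself; this is precisely why the relative case costs nothing beyond the absolute one. The only mild subtlety I expect is that the generic tuple $\tuple{k}$ need not be $k$-rational, only separable, so one must use the form of the descent valid for separable points (as in \emph{loc.\ cit.}) rather than for $k$-points alone. This is the sole place requiring care, and it is handled exactly as in the proof of \cite[Thm.~5.11]{GIT}; everything else is a formal passage through Theorem~\ref{thm:orbitcritk}(iii).
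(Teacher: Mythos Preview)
Your proposal is correct and follows essentially the same route as the paper, which simply states that the proof is completely analogous to that of \cite[Thm.~5.11]{GIT}. Both arguments replace the acting group $G$ by $H^0$ and invoke the Galois descent machinery from \cite{GIT}; your packaging via the cocharacter-closed criterion Theorem~\ref{thm:orbitcritk}(iii) is slightly more high-level than the paper's (commented-out) explicit version, but the underlying content---choose a separable tuple, use that $C_H(\tuple{k})=C_H(K)$ has $\Gamma$-stable $k_s$-points because $K$ is $k$-defined, then apply \cite[Thm.~3.2]{GIT} to descend $R_u(P_\lambda(H))(k_1)$-conjugacy to $R_u(P_\lambda(H))(k)$-conjugacy---is identical. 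The one point worth making explicit in your write-up is that the $k$-definedness of $K$ is used not only to secure a separable generic tuple but, crucially, to ensure the Galois-stability of the centraliser needed for the descent step; you allude to this by deferring to \emph{loc.\ cit.}, but spelling it out would tighten the argument.
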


We end this section with a converse of Theorem~\ref{thm:serresquestion}
in case $k$ is perfect.
For this one needs an extension to arbitrary $k$ of the
results on optimal parabolic subgroups from Section~\ref{sec:optpars}.
We briefly indicate how to set this up.

First, we adapt some of the notation from \cite[Sec.~4, Sec.~5]{GIT}
to the relative setting over $k$.
Suppose $K$ is a subgroup of $G$
and suppose $\lambda \in Y_k(H)$ is such that $K \subseteq P_\lambda$.
Set $M := c_\lambda(K)$ and, as in
Section~\ref{sec:optpars}, let $S_n(M) = \overline{H\cdot M^n}$.
Then $K^n$ is uniformly $S_n(M)$-unstable over $k$ for the action of $H$
on $G^n$ in the sense of \cite[Def.~4.2]{GIT}. Any $G(k)$-invariant norm
on $Y_k(G)$ in the sense of \cite[Def.~4.1]{GIT} restricts to an $N_{G(k)}(H)$-invariant
norm on $Y_k(H)$; let $\left\|\,\right\|$ be such a norm.
Then \cite[Def.~4.4]{GIT} provides an \emph{optimal class}
$\Omega(K^n,S_n(M),k) \subseteq Y_k(H)$ of $k$-defined cocharacters of $H$.

With these preliminaries in hand, it is straightforward to derive
analogues of \cite[Thm.~5.16]{GIT}, \cite[Def.~5.17]{GIT} and \cite[Thm.~5.18]{GIT}
in the relative setting.
The first result is derived as follows:
let $N$ be the closure in $G$ of $N_{G(k_s)}(H)$.
Then $N$ is a $k$-defined closed
subgroup of $N_G(H)$, $H$ is a normal subgroup of $N$, and $N(k)=N_{G(k)}(H)$.
Now apply \cite[Thm.~4.5]{GIT} with $(G',G,V,X,S)=(N,H,G^n,K^n,S_n(M))$.
For the second result, one needs to choose an $N$-invariant $k$-defined norm
on $Y(H)$.

\begin{thm}
\label{thm:gcroptoverk}
Suppose $k_1/k$ is an algebraic extension of perfect fields.
Let $K$ be a $k$-defined subgroup of $G$.
If $K$ is relatively $G$-completely reducible
over $k$ with respect to $H$, then
$K$ is relatively $G$-completely reducible over $k_1$ with respect to $H$.
\end{thm}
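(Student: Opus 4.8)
The plan is to reduce the statement to two implications, using that because $k_1/k$ is algebraic and $k$, $k_1$ are perfect we have $\ovl k=\ovl{k_1}$ and $\ovl k/k_1$ is separable. By Remark~\ref{rem:relativelystrong}(i) we may assume $H$ is connected. Writing ``relatively $G$-cr'' (with no field) for the geometric notion of Definition~\ref{defn:relativelystrong}, which agrees with relative $G$-complete reducibility over $\ovl k$, I would show: (Step~1) if $K$ is relatively $G$-cr over $k$ with respect to $H$, then $K$ is relatively $G$-cr with respect to $H$; and (Step~2) the latter implies $K$ is relatively $G$-cr over $k_1$ with respect to $H$. Granting these, the theorem is immediate.

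For Step~1 I would argue by contraposition, exploiting the optimal destabilizing R-parabolic subgroup and its rationality over a perfect field. Choose a generic tuple $\tuple k\in K^n$ of $K$; since $K$ is $k$-defined we may take $\tuple k$ to be $k$-defined. Let $M$ be the geometric representative attached to $K$ by Theorem~\ref{thm:uniquegcr}, so that $M$ is relatively $G$-cr with respect to $H$, and $K$ is relatively $G$-cr with respect to $H$ if and only if $M$ is $H$-conjugate to $K$. Suppose $K$ is not relatively $G$-cr with respect to $H$; then $M$ is not $H$-conjugate to $K$, so by Theorem~\ref{thm:optpar}(iii) (and Definition~\ref{defn:optpar}) we have $P(K)\neq G$ and $\Omega(K)$ is nontrivial. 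Since $k$ is perfect and $S_n(M)$ may be chosen $k$-defined, the relative form over $k$ of the rationality statement \cite[Thm.~5.18]{GIT} set up above gives that $\Omega(K,k)=\Omega(K)\cap Y_k(H)$ is nonempty and $P(K,k)=P(K)$ is $k$-defined. Pick $\lambda\in\Omega(K,k)\subseteq Y_k(H)$; then $P_\lambda=P(K)$ is $k$-defined and $K\subseteq P_\lambda\neq G$. By the final assertion of Theorem~\ref{thm:optpar}(iii), since $M$ is not $H$-conjugate to $K$, the subgroup $K$ is contained in no $L_\nu$ with $\nu\in Y(H)$ and $P_\nu=P(K)=P_\lambda$; a fortiori there is no such $\nu$ in $Y_k(H)$. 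As it suffices to test relative $G$-complete reducibility over $k$ on cocharacters in $Y_k(H)$, this shows $K$ is not relatively $G$-cr over $k$ with respect to $H$, completing the contrapositive.

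Step~2 I would handle by a single citation: relative $G$-complete reducibility with respect to $H$ coincides with relative $G$-complete reducibility over $\ovl k$, and $\ovl k/k_1$ is a separable extension because $k_1$ is perfect (here $\ovl k=\ovl{k_1}$, as $k_1/k$ is algebraic). Since $K$ is $k$-defined, hence $k_1$-defined, Theorem~\ref{thm:serresquestion} applied to the extension $\ovl k/k_1$ yields that $K$ is relatively $G$-cr over $k_1$ with respect to $H$. Combining Steps~1 and~2 proves the theorem; together with Theorem~\ref{thm:serresquestion} this shows relative $G$-complete reducibility over $k$, over $k_1$, and over $\ovl k$ all coincide in this situation.

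The main obstacle is the rationality input in Step~1: the relative analogue over a perfect field of \cite[Thm.~5.16]{GIT} and \cite[Thm.~5.18]{GIT}, to the effect that $\Omega(K,k)$ is genuinely nonempty and equal to $\Omega(K)\cap Y_k(H)$ and that $P(K)$ descends to a $k$-defined R-parabolic subgroup. This rests on $S_n(M)=\ovl{H\cdot M^n}$ being $k$-defined (which follows from the canonicity of the $H$-conjugacy class of $M$ in Theorem~\ref{thm:uniquegcr}, making it $\Gamma$-stable) and on the Kempf--Rousseau-type descent of optimal cocharacters to the field of definition over a perfect field; in the relative setting this descent is carried out via \cite[Thm.~4.5]{GIT} applied with $(G',G)=(N,H)$, where $N$ is the closure of $N_{G(k_s)}(H)$, and it is here that perfectness of $k$ is essential. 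Isolating all of the rational content into this single descent statement is precisely what allows the optimal-parabolic argument to avoid the technicalities of the original proof of \cite[Thm.~5.8]{BMR}.
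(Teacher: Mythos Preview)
Your proof is correct and follows essentially the same approach as the paper: both reduce via Theorem~\ref{thm:serresquestion} along the separable extension $\ovl k/k_1$, and both prove the contrapositive of the implication from $k$ to $\ovl k$ by invoking the $k$-definedness of the optimal destabilizing R-parabolic subgroup $P(K)$ (the relative analogue of \cite[Thm.~5.18(ii)]{GIT} over a perfect field) together with the fact that $K$ lies in no R-Levi subgroup $L_\mu$ of $P(K)$ with $\mu\in Y(H)$. Your write-up is more explicit about the rationality ingredients (nonemptiness of $\Omega(K,k)$, $\Gamma$-stability of $S_n(M)$), but the logical skeleton is the same.
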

\begin{proof}
By Theorem~\ref{thm:serresquestion}, we may assume that $k_1=\ovl k$.
Suppose $K$ is not relatively $G$-completely reducible with respect to $H$.
Then $K$ is not contained in any R-Levi subgroup of the optimal
destabilizing R-parabolic subgroup $P(K)$ with respect to $H$.
Now $P(K)$ is $k$-defined by the analogue of \cite[Thm.~5.18(ii)]{GIT} 
so $K$ is not $G$-completely reducible over $k$ with respect to $H$.
\end{proof}

\begin{rem}
We do not know whether 
Theorem \ref{thm:gcroptoverk} holds for an arbitrary separable algebraic field extension $k_1/k$.  
For further discussion, see \cite[Ex.~5.21]{GIT} and \cite[Rem 1.2]{BMR10}.
\end{rem}

\begin{rem}
 The results in this section have obvious counterparts for Lie algebras.  We leave the details to the reader.
\end{rem}

\section{Examples and counterexamples}
\label{sect:ex}
\subsection{Relative $\GL(V)$-complete reducibility}
\label{sub:relGLV-cr}

Now we investigate the concept of relative complete
reducibility in case the ambient reductive group is a general linear
group.

Let $V$ be a finite-dimensional $k$-vector space and set $G = \GL(V)$.
Recall that if $K$ is a subgroup of $G$, then $K$ is $G$-cr if and only if $V$
is a semisimple module for $K$.
In this subsection we give an analogous interpretation for relative
$G$-complete reducibility with respect to a smaller general linear group $H$ inside
$G$.

\begin{prop}
\label{prop:relativemodules}
Let $U$ be a subspace of $V$, and pick a direct complement $\tilde U$ to $U$.
Let $H = \GL(U) \subseteq G$ embedded in the obvious way.
Let $K$ be a subgroup of $G = \GL(V)$.
Then $K$ is relatively $G$-completely reducible with respect to $H = \GL(U)$
if and only if the following two conditions hold:
\begin{enumerate}[{\rm(i)}]
\item every $K$-submodule of $V$ contained in $U$ has a $K$-complement in $V$
containing $\tilde U$;
\item every $K$-submodule of $V$ containing $\tilde U$ has a
$K$-complement in $V$ contained in $U$.
\end{enumerate}
\end{prop}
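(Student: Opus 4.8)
The plan is to translate everything into the language of flags and gradings of $V$, exploiting the standard description of R-parabolic and R-Levi subgroups of $\GL(V)$. Since $H=\GL(U)$ acts on $U$ and fixes $\tilde U$ pointwise, any $\lambda\in Y(H)$ grades $U$ and hence grades $V=\bigoplus_i V_i$ with $\tilde U\subseteq V_0$ and $V_i\subseteq U$ for $i\neq 0$; writing $V_{\ge i}=\bigoplus_{j\ge i}V_j$, the condition $K\subseteq P_\lambda$ says precisely that each $V_{\ge i}$ is a $K$-submodule. I would first record the reformulation that $K$ is relatively $G$-cr with respect to $H$ if and only if for every such $\lambda$ one can find a $K$-stable grading $V=\bigoplus_i N_i$ with $V_{\ge i}=\bigoplus_{j\ge i}N_j$, $N_i\subseteq U$ for $i\neq 0$, and $\tilde U\subseteq N_0$; such a grading is exactly the data of a $\mu\in Y(H)$ with $P_\mu=P_\lambda$ and $K\subseteq L_\mu$, since the constraints on the $N_i$ are precisely what force $\mu$ to preserve $U$ and act trivially on $\tilde U$.

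For the forward implication I would feed the two conditions by special two-weight cocharacters. Given a $K$-submodule $W\subseteq U$, let $\lambda\in Y(H)$ have weight $1$ on $W$ and weight $0$ on a vector-space complement of $W$ in $U$ (and weight $0$ on $\tilde U$); then $P_\lambda=\Stab(W)$ and $K\subseteq P_\lambda$. Relative $G$-complete reducibility produces $\mu\in Y(H)$ with $P_\mu=\Stab(W)$ and $K\subseteq L_\mu$; since $\Stab(W)$ is the stabiliser of the single subspace $W$, the flag of $\mu$ is $0\subset W\subset V$, so $\mu$ has exactly two weights with $W$ as top weight space. As $\tilde U$ has $\mu$-weight $0$ and $\tilde U\cap W=0$, the bottom weight space $C$ must have weight $0$ and contain $\tilde U$; then $C$ is a $K$-complement to $W$ containing $\tilde U$, giving (i). Condition (ii) follows symmetrically: for a $K$-submodule $W\supseteq\tilde U$ one writes $W=(W\cap U)\oplus\tilde U$ and takes $\lambda$ with weight $0$ on $W$ and weight $-1$ on a complement of $W\cap U$ in $U$, so that $W$ is the top weight space; the resulting $L_\mu$-complement has nonzero $\mu$-weight and hence lies in $U$.

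The reverse implication is the substantive step, and the engine is the modular law. Starting from an arbitrary $\lambda\in Y(H)$ with $K\subseteq P_\lambda$, I would build the required grading by choosing each $N_i$ as a relative complement of $V_{\ge i+1}$ in $V_{\ge i}$, noting that relative complements automatically assemble into a grading compatible with the flag, since $V_{\ge i}=N_i\oplus V_{\ge i+1}$ iterates to $V_{\ge i}=\bigoplus_{j\ge i}N_j$. For $i\ge 0$ the submodule $V_{\ge i+1}$ lies in $U$, so condition (i) gives a $K$-complement $C$ in $V$ with $\tilde U\subseteq C$; setting $N_i=V_{\ge i}\cap C$ and applying the modular law (using $V_{\ge i+1}\subseteq V_{\ge i}$) yields $V_{\ge i}=V_{\ge i+1}\oplus N_i$ with $N_i\subseteq U$ for $i\ge 1$ and $\tilde U\subseteq N_0$. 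For $i\le -1$ the submodule $V_{\ge i+1}$ contains $\tilde U$, so condition (ii) gives a $K$-complement $D\subseteq U$, and $N_i=V_{\ge i}\cap D$ is a relative complement contained in $U$. The resulting grading has $\tilde U\subseteq N_0$ and $N_i\subseteq U$ otherwise, so it defines the desired $\mu\in Y(H)$ with $P_\mu=P_\lambda$ and $K\subseteq L_\mu$.

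I expect the main obstacle to be organising the reverse direction so that the two one-sided hypotheses (i) and (ii) combine to split a multi-step flag: the key realisation is that the cut-off point is exactly the jump between $V_{\ge 1}\subseteq U$ and $V_{\ge 0}\supseteq\tilde U$, so that condition (i) handles the degrees $i\ge 0$ while condition (ii) handles the degrees $i<0$, and the modular law converts the ambient complements supplied by (i) and (ii) into relative complements along the flag. I would also check the degenerate cases $\tilde U=0$ (where $H=G$ and both conditions reduce to semisimplicity of $V$, recovering the absolute statement) and $U=0$ (where $H$ is trivial and both conditions hold vacuously).
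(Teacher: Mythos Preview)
Your proposal is correct and follows the same approach the paper has in mind: translate cocharacters of $H$ into gradings of $V$ with $\tilde U$ in the zero-weight piece, prove the forward direction by feeding in two-step flags, and prove the reverse direction by splitting the $\lambda$-flag into $K$-stable graded pieces satisfying the $U/\tilde U$ constraints. The paper in fact omits the proof entirely (``straightforward and left as an exercise''), though the authors' intended argument---visible in commented-out source---proceeds by constructing an opposite flag of $K$-submodules and splitting into two cases according to whether $0$ occurs as a $\lambda$-weight; your use of the modular law to manufacture relative complements $N_i=V_{\ge i}\cap C$ directly is a cleaner way to organise the same computation and avoids that case distinction.
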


The proof is straightforward and we leave it as an exercise for the reader.

\begin{rems}
(i).
Note that conditions (i) and (ii) in Proposition \ref{prop:relativemodules}
are ``dual'' to each other in the sense
that the complement in (i) is a submodule of the form in (ii), and vice versa.
One considers only those decompositions of $V$ as a direct sum of $K$-submodules
that are compatible with the fixed
decomposition $V = U \oplus \tilde U$
(even though $U$ and $\tilde U$ are not required to be $K$-stable!).
Even in the very special case 
considered in Proposition \ref{prop:relativemodules} the concept of a
relatively completely reducible subgroup of $\GL(V)$ appears to be new.

(ii).
Note that for $H = G$ in Proposition \ref{prop:relativemodules}
we recover the fact that $K$ is $\GL(V)$-completely reducible if and only
if $V$ is a semisimple $K$-module. 

(iii).
Fixing the complementary subspace $\tilde U$ to $U$ in
Proposition \ref{prop:relativemodules} fixes the embedding of $H$ in $G$.
Note that this is crucial, as the result depends on the choice of $\tilde U$.
To see this, consider the special case that $K$ acts completely reducibly on
$V$ and $U$ is a $K$-submodule of $V$.
Then it follows from Proposition \ref{prop:relativemodules} that
$K$ is relatively $G$-cr with respect to $H$ if and only if $\tilde U$ is
also a $K$-submodule.
\end{rems}

We can refine Proposition \ref{prop:relativemodules} to show more
accurately how the conclusion depends on the structure of $V$ as a $K$-module,
and how this interacts with the subspace $U$.
To do this, we first define two operations on the set of subspaces of $V$.
Firstly, for any subspace $W \subseteq V$, let $\sigma_K(W)$ be the
$K$-submodule of $V$ generated by the submodules contained in $W$;
note $\sigma_K(W) \subseteq W$.
Secondly, let $\iota_K(W)$ denote the smallest $K$-submodule of
$V$ containing $W$ (i.e., the intersection of all such submodules).
Then we have the following result.

\begin{prop}\label{prop:relativemodrefined}
Let $G$, $U$, $\tilde U$, $H$ and $K$ be as in Proposition \ref{prop:relativemodules}.
Then $K$ is relatively $G$-completely reducible with respect to $H$ if and only
if the following two conditions hold:
\begin{enumerate}[{\rm(i)}]
\item $\sigma_K(U)$ is a completely reducible $K$-module;
\item $V = \sigma_K(U) \oplus \iota_K(\tilde U)$.
\end{enumerate}
\end{prop}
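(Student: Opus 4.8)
The plan is to derive Proposition~\ref{prop:relativemodrefined} directly from Proposition~\ref{prop:relativemodules} by translating the two conditions of the latter into the language of $\sigma_K$ and $\iota_K$. Write $S := \sigma_K(U)$ and $I := \iota_K(\tilde U)$. The first thing I would record are two elementary observations. Since $S$ is the sum of all $K$-submodules contained in $U$ and $S \subseteq U$, a $K$-submodule $W$ of $V$ satisfies $W \subseteq U$ if and only if $W \subseteq S$; dually, since $I$ is the intersection of all $K$-submodules containing $\tilde U$, a $K$-submodule $W$ satisfies $\tilde U \subseteq W$ if and only if $I \subseteq W$. Thus condition (i) of Proposition~\ref{prop:relativemodules} says that every $K$-submodule of $S$ has a $K$-complement in $V$ containing $\tilde U$, while condition (ii) says that every $K$-submodule of $V$ containing $I$ has a $K$-complement in $V$ contained in $S$. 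It then remains to show that these two statements together are equivalent to the assertions that $S$ is completely reducible and that $V = S \oplus I$.

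For the implication that complete reducibility of $S$ together with $V = S \oplus I$ gives the two conditions of Proposition~\ref{prop:relativemodules}, I would argue as follows. Given a $K$-submodule $W \subseteq S$, complete reducibility of $S$ yields $S = W \oplus W''$, and then $V = S \oplus I = W \oplus (W'' \oplus I)$ exhibits a $K$-complement of $W$ containing $\tilde U \subseteq I$. Conversely, given a $K$-submodule $W \supseteq I$, the Dedekind modular law applied to $V = S \oplus I$ with $I \subseteq W$ gives $W = (W \cap S) \oplus I$; choosing a complement $W''$ of $W \cap S$ inside the completely reducible module $S$, one checks that $V = W \oplus W''$ with $W'' \subseteq S$, as required.

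For the reverse implication I would start from the two conditions of Proposition~\ref{prop:relativemodules}. Complete reducibility of $S$ comes out of condition (i): for any $K$-submodule $W \subseteq S$, take the guaranteed complement $W'$ of $W$ in $V$ and apply the modular law (using $W \subseteq S$) to obtain $S = W \oplus (S \cap W')$, so every submodule of $S$ splits off inside $S$. For the decomposition $V = S \oplus I$ I would proceed in two steps. The inclusion $V = S + I$ follows by applying condition (ii) to the submodule $I$ itself: its complement $W'$ is contained in $S$, so $V = W' + I \subseteq S + I$. The directness $S \cap I = 0$ is the step I expect to be the main obstacle, and it is exactly where the minimality built into $\iota_K$ is needed. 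Since $S \cap I$ is a $K$-submodule contained in $U$, condition (i) provides a complement $W'''$ of $S \cap I$ in $V$ with $\tilde U \subseteq W'''$; because $W'''$ is a $K$-submodule containing $\tilde U$, it must contain $I = \iota_K(\tilde U)$, whence $S \cap I \subseteq I \subseteq W'''$. Combined with $(S \cap I) \cap W''' = 0$ this forces $S \cap I = 0$.

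I would close by noting that the whole argument lives in the lattice of $K$-submodules of $V$: the two observations at the start convert Proposition~\ref{prop:relativemodules} into statements purely about $S$ and $I$, and the only nonformal tools are the Dedekind modular law and the defining extremal properties of $\sigma_K$ and $\iota_K$. The one genuinely delicate point, which I would isolate rather than bury in the splitting manipulations, is the directness $S \cap I = 0$; once complete reducibility of $S$ and $V = S + I$ are established, everything else is routine.
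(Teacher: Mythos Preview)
Your proof is correct and follows essentially the same approach as the paper's: both reduce the claim to Proposition~\ref{prop:relativemodules} via the extremal properties of $\sigma_K$ and $\iota_K$ together with the modular law. The only cosmetic difference is that for the directness $S\cap I=0$ the paper applies condition~(i) of Proposition~\ref{prop:relativemodules} directly to $S$ (obtaining a complement $C\supseteq\tilde U$, hence $C\supseteq I$, so $S\cap I\subseteq S\cap C=0$), whereas you apply it to $S\cap I$; both work.
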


\begin{proof}
Suppose $K$ is relatively $G$-cr with respect to $H$. We first show (i).
Let $W$ be a $K$-submodule of $\sigma_K(U)$.
Then $W \subseteq U$, so there exists a $K$-submodule $\tilde W \supseteq \tilde U$
with $V= W \oplus \tilde W$,
by condition (i) of Proposition \ref{prop:relativemodules}.
Then $\tilde W \cap \sigma_K(U)$ is a $K$-complement to $W$ in $\sigma_K(U)$.

We now show (ii).
By condition (i) of Proposition \ref{prop:relativemodules}, since
$\sigma_K(U)$ is a $K$-submodule contained in $U$, there exists a $K$-complement
to $\sigma_K(U)$ containing $\tilde U$.
Since $\iota_K(\tilde U)$ is the smallest such $K$-submodule, we must have
$\sigma_K(U) \cap \iota_K(\tilde U) = \{0\}$.
By condition (ii) of Proposition \ref{prop:relativemodules}, since
$\iota_K(\tilde U)$ is a $K$-submodule containing $\tilde U$, there exists a $K$-complement
to $\iota_K(\tilde U)$ contained in $U$.
Since $\sigma_K(U)$ is the largest such $K$-submodule, we must have
$\sigma_K(U) + \iota_K(\tilde U) = V$.

Now suppose (i) and (ii) hold.
If $W$ is a $K$-submodule of $V$ contained in $U$, then
$W \subseteq \sigma_K(U)$, by definition.
By (i), there exists a
$K$-complement $W'$ to $W$ in $\sigma_K(U)$.
Then $W' \oplus \iota_K(\tilde U)$ is a $K$-complement to
$W$ in $V$ containing $\tilde U$, by (ii), giving condition (i) of Proposition \ref{prop:relativemodules}.
If $W$ is a $K$-submodule of $V$ containing $\tilde U$, then
$W \supseteq \iota_K(U)$, by definition.
Let $W'$ be a $K$-complement to $W \cap \sigma_K(W)$ in $\sigma_K(W)$.
Then $W'$ is a $K$-complement to $W$ in $V$ contained in $U$,
giving condition (ii)
of Proposition \ref{prop:relativemodules}.
\end{proof}

The following result gives a necessary condition for a subgroup to be
relatively $G$-cr with respect to a Levi subgroup of $G$
in terms of the corresponding direct sum decomposition of $V$:
\begin{cor}
\label{gl:cor1}
Suppose $L$ is a Levi subgroup of $G = \GL(V)$, with corresponding
decomposition $V = U_1 \oplus \cdots \oplus U_s$.
Let $K$ be a subgroup of $G$.
Then if $K$ is relatively $G$-completely reducible with respect to $L$, the following two conditions
hold for each $i$:
\begin{enumerate}[{\rm(i)}]
\item every $K$-submodule of $V$ contained in $U_i$
has a $K$-complement containing $\bigoplus_{j \neq i} U_j$;
\item every $K$-submodule of $V$ containing $\bigoplus_{j \neq i} U_j$
has a $K$-complement contained in $U_i$.
\end{enumerate}
\end{cor}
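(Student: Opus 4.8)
The plan is to reduce this corollary to Proposition~\ref{prop:relativemodules} by isolating one block of the Levi decomposition at a time. Fix an index $i$ and set $U := U_i$ and $\tilde U := \bigoplus_{j \neq i} U_j$, so that $V = U \oplus \tilde U$ is exactly a decomposition of the kind appearing in Proposition~\ref{prop:relativemodules}. With this choice, the group $\GL(U) = \GL(U_i)$, embedded in $G$ in the obvious way (acting on $U_i$ and fixing $\tilde U$ pointwise), is precisely the $i$-th direct factor of the Levi subgroup $L = \GL(U_1) \times \cdots \times \GL(U_s)$.

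The key point is that $\GL(U_i)$ is a \emph{normal} subgroup of $L$, since it is a direct factor, and both $L$ and $\GL(U_i)$ are reductive. Thus Proposition~\ref{prop:normalrelative} applies with $H = L$ and $N = \GL(U_i)$, and I would invoke it to conclude that if $K$ is relatively $G$-completely reducible with respect to $L$, then $K$ is relatively $G$-completely reducible with respect to $\GL(U_i)$. It then remains only to translate this into the module-theoretic language of the corollary: applying Proposition~\ref{prop:relativemodules} to $U = U_i$ and $\tilde U = \bigoplus_{j \neq i} U_j$ yields precisely conditions (i) and (ii) for this particular $i$. Since $i$ was arbitrary, both conditions hold for every $i$, which is what is claimed.

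I do not expect any serious obstacle, since each step is a direct application of an already-established result; the only point requiring care is to check that the ``obvious'' embedding of $\GL(U)$ used in Proposition~\ref{prop:relativemodules} agrees with the embedding of the block $\GL(U_i)$ as a normal factor of $L$, which is immediate once $\tilde U$ is fixed as $\bigoplus_{j \neq i} U_j$. Finally, I would emphasise that the corollary asserts only necessity: conditions (i) and (ii) holding separately for each $i$ need not force relative $G$-complete reducibility with respect to the full Levi $L$, because a cocharacter of $L$ induces a flag on $V$ that mixes the blocks $U_j$, and relative $G$-complete reducibility with respect to $L$ demands compatible $K$-complements for all such flags simultaneously, not merely for the block-diagonal ones.
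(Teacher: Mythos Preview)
Your proof is correct and follows exactly the same approach as the paper: reduce to each factor $\GL(U_i)$ via Proposition~\ref{prop:normalrelative} (using that $\GL(U_i)$ is normal in $L$), then apply Proposition~\ref{prop:relativemodules} with $U = U_i$ and $\tilde U = \bigoplus_{j\neq i} U_j$. Your additional remarks on the embedding and on why only necessity is claimed are accurate but not needed for the argument itself.
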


\begin{proof}
We have $L = \GL(U_1) \times \cdots \times \GL(U_s)$.  If $K$ is relatively $G$-cr with respect to $L$ then Proposition \ref{prop:normalrelative} implies that $K$ is relatively $G$-cr with respect to $\GL(U_i)$ for each $i$.
The result now follows from Proposition \ref{prop:relativemodules}.
\end{proof}

\subsection{More examples and counterexamples}
\label{sub:exandnot}

\begin{rem}
\label{rem:reductive}
We noted in Remark \ref{rem:relativelystrong}(iv) that in
general a relatively $G$-cr subgroup need not be reductive.
However, one can ensure that it is reductive under suitable conditions.
For example,
if $H$ is a maximal rank reductive subgroup of $G$ and
$K$ is a subgroup of $G$ which
is relatively $G$-completely reducible with respect to $H$
and which is normalized by a maximal torus of $H$, then
$K$ is reductive.
The special case of this result when $H = G$ is due to Serre, \cite[Prop.\ 4.1]{serre2},
and one can follow his argument to prove the result in the relative setting.
We leave the details to the reader.
\end{rem}

We noted at the beginning of Section \ref{sub:newfromold}
that the direct analogue of \cite[Thm.~3.10]{BMR},
namely that a normal subgroup $N$ of a subgroup $M$ of
$G$ is relatively $G$-cr with
respect to some reductive subgroup $H$ of $G$ provided $M$ is,
fails in general. We now present two examples which demonstrate this failure.

\begin{exmp}
\label{exmp:relative1}
Let $\Char k$ be arbitrary. Let $G =\GL_3$
and let $H$ be the image of $\SL_2$ embedded in $G$ by
$ A\mapsto
\left(
 \begin{array}{c|c}
  1 & 0  \\
  \hline
  0 & A \\
   \end{array}
\right)$.
Let $T$ be the standard maximal torus of $G$ consisting of the
diagonal matrices in $G$. Let $\alpha$ and $\beta$
be the standard simple roots of $G$ with respect to $T$ (corresponding respectively to the $(2,3)$- and $(1,2)$-entries of matrices in $G$).
Let $B$ be the Borel subgroup of $G$ consisting of upper triangular
matrices in $G$.
Put
$K = U_{\beta}U_{\alpha+\beta}$.
Then $K$ is relatively $G$-cr with respect to $H$, but the normal subgroups
$U_{\beta}$ and $U_{\alpha+\beta}$ of $K$ are not.


Take for example $N = U_{\alpha + \beta}$ and define $\lambda\in Y(H)$ by
$\lambda(a)= \diag(1,a,a^{-1})$.
Then $N\subseteq P_\lambda$,
%
which is a Borel subgroup of $G$.
Clearly, $N$ is not contained in any Levi subgroup of $P_\lambda$,
since $N\subseteq R_u(P_\lambda)$. So $U_{\alpha+\beta}$ is not
relatively $G$-cr with respect to $H$.
The argument for $U_{\beta}$ is similar; replace $\lambda$ with $-\lambda$.

Now we show
that $K$ is relatively $G$-cr with respect to $H$ by showing
that $\lambda\in Y(H)$ and $H\subseteq P_\lambda$ implies that
$\lambda = 0$. Let $\lambda\in Y(H)$. We can find $h\in H$ such
that $\mu:=h\cdot\lambda$ is in diagonal form. Then we have for
$x \in K$ that $\underset{a \ra 0}{\lim}\, \lambda(a)x\lambda(a)^{-1}$
exists if and only if $\underset{a \ra 0}{\lim}\,  \mu(a)h x h^{-1}\mu(a)^{-1}$
exists. Since $K$ is stable under conjugation by $H$, we may now assume that
$\lambda$ is in diagonal form, that is,
$\lambda(a) = \diag(1,a^n, a^{-n})$
for some integer $n$. It is now straightforward to show that $n=0$.
\end{exmp}

The next example shows that a connected
reductive normal subgroup of a connected reductive group that is
relatively $G$-cr with respect to $H$ need not be relatively $G$-cr with respect to $H$.

\begin{exmp}
\label{exmp:relative2}
Suppose $p=2$.  Let $V_1$ and $V_2$ be copies of the vector space $k^2$ and let
$H_1= H_2= k^*\times {\rm SL}_2$.
We identify $H_1$ and $H_2$ with subgroups of $H_1 \times H_2$.
Define $\delta_i\in Y(H_i)$ by $\delta_i(x)= (x,I)$, where $I$ is the identity matrix in
${\rm SL}_2$.
Define an action of $H_1\times H_2$ on $V_1\oplus V_2$ by
\[
((x_1,A_1),(x_2,A_2))\cdot (v_1,v_2)= (x_2^{-1}A_1v_1, x_1^{-1}A_2v_2),
\]
where $A_iv_i$ denotes the usual matrix product.
We choose an embedding of $(H_1\times H_2)\ltimes (V_1\oplus V_2)$ inside a reductive group $G$.
Let $\pi_i\colon H_i\times V_i\ra H_i$ be the canonical projection.
Let $M_i$ be the copy of ${\rm SL}_2$ inside $H_i$.
We can choose a copy $N_i$ of ${\rm PGL}_2$ inside $M_i\ltimes V_i$ such that
$\pi_i(N_i)= M_i$ but $N_i$ is not $(H_i\ltimes V_i)$-conjugate to a subgroup of $H_i$;
to see this, note that the image of the adjoint representation of ${\rm SL}_2$ in ${\rm GL}_3$ lies in $[P,P]$, where $P$ is a maximal parabolic subgroup of ${\rm GL}_3$, and $[P,P]$ is isomorphic to $M_i\ltimes V_i$, so we can take $N_i$ to be this image.

Let $H= \{((x_1,A_1),(x_2,A_2))\in H_1\times H_2\,|\,x_2= x_1^{-1}\}$ and set $K=N_1N_2$.
Then $K$ is isomorphic to $N_1\times N_2$, since $N_1$ and $N_2$ commute with each other
and have disjoint Lie algebras. So $K$ is a connected reductive group and $N_1,N_2$
are connected normal subgroups of $K$.
We show that $K$ is relatively $G$-cr with respect to $H$ but $N_1$ and $N_2$ are not.

Let $\lambda\in Y(H)$ such that $K\subseteq P_\lambda$.
We can write $\lambda= \lambda_1+ \lambda_2$ where $\lambda_i\in Y(H_i)$.
We have $N_i\subseteq P_\lambda$.
Now $Z(H_i)\subseteq P_\lambda$ because $Z(H_i)$ centralizes the image of $\lambda$.
Since $Z(H_i)$ acts trivially on $M_i$ and acts non-trivially as multiplication by scalars on $V_i$,
it is clear that $M_i$ and the non-trivial subgroup $U_i$ of $V_i$ generated by the set
$\{\pi_i(n)\,|\,n\in N_i\}$ are both contained in $P_\lambda$.
Because $M_i$ does not lie in a proper parabolic subgroup of $H_i$,
we must have $\lambda= n_1\delta_1+ n_2\delta_2$ for some $n_1,n_2\in {\mathbb Z}$.
We must have $n_1+n_2=0$ by our choice of $H$.
We have $\lambda(x)v_1\lambda(x)^{-1}= x^{-n_2}v_1$ for $v_1\in V_1$ and
$\lambda(x)v_2\lambda(x)^{-1}= x^{-n_1}v_2$ for $v_2\in V_2$.
Since the non-trivial subgroups $U_1$ and $U_2$ lie in $P_\lambda$,
this forces $n_1=n_2=0$.
Hence $\lambda$ is the trivial cocharacter and we conclude that
$K$ is relatively $G$-cr with respect to $H$.

A similar argument shows that $N_1\subseteq P_\lambda$ and $U_1\subseteq R_u(P_\lambda)$, where $\lambda= \delta_1- \delta_2$.
It follows that $c_\lambda(N_1)= M_1$.  Now $N_1$ is not $H$-conjugate to $M_1$,
and this implies by Theorem \ref{thm:orbitcrit}(ii) that $N_1$ is not relatively $G$-cr with respect to $H$.
Similarly, $N_2$ is not relatively $G$-cr with respect to $H$.
\end{exmp}


\bigskip
{\bf Acknowledgements}:
The authors acknowledge the financial support of EPSRC Grant EP/C542150/1, Marsden Grant UOC0501 and 
the DFG-priority programme SPP1388 ``Representation Theory''.
Part of the research for this paper was carried out while the
authors were staying at the Mathematical Research Institute
Oberwolfach supported by the ``Research in Pairs'' programme.
Also, part of this paper was written during a stay of the three
first authors at the Isaac Newton Institute for Mathematical
Sciences during the ``Algebraic Lie Theory'' Programme in 2009.

We also wish to thank the referee for some helpful comments.

\bigskip

\end{document}